\newtheorem{lem}{Lemma}[section]
\newtheorem{prop}{Proposition}[section]
\newtheorem{cor}{Corollary}[section]
\newtheorem{thm}{Theorem}[section]
\theoremstyle{definition}
\theoremstyle{remark}
\newtheorem{remark}{Remark}[section]
\newtheorem{remarks}{Remarks}[section]
\newtheorem*{remarks*}{Remarks}
\newtheorem*{remark*}{Remark}
\numberwithin{equation}{section}
\newcommand{\C}{\mathbb{C}}
\newcommand{\N}{\mathbb{N}}
\newcommand{\T}{\mathbb{T}}
\newcommand{\R}{\mathbb{R}}
\newcommand{\RR}{\mathcal{R}}
\newcommand{\RRw}{\mathcal{R}_{\tilde{\omega}}}
\newcommand{\ov}{\overline}
\newcommand{\norm}[1]{\|#1 \|}
\newcommand{\pt}{\partial}
\newcommand{\e}{\varepsilon}
\newcommand{\be}{\begin{equation}}
\newcommand{\ee}{\end{equation}}
\newcommand{\eps}{\varepsilon}
\newcommand{\Ms}{\mathsf{M}}
\newcommand{\Vs}{\mathsf{V}}
\newcommand{\Ws}{\mathsf{W}}
\newcommand{\As}{\mathsf{A}}
\newcommand{\Bs}{\mathsf{B}}
\newcommand{\weakto}{\rightharpoonup}
\newcommand{\ii}{i}
\renewcommand{\rho}{\varrho}
\newcommand{\Hil}{\mathsf{H}}
\newcommand{\inner}[2]{  \langle #1, #2  \rangle } % shorten inner product
\newcommand{\bbinner}[2]{ \bigg \langle #1, #2 \bigg \rangle } % shorten bigg inner product
\newcommand{\comment}[1]{}
\def\section{\@startsection{section}{1}%
  \z@{1.5\linespacing\@plus\linespacing}{.5\linespacing}%
  {\normalfont\bfseries\large\centering}}
\begin{document}
\title[Calogero--Moser DNLS]{The Calogero--Moser derivative \\ nonlinear Schr\"odinger equation}

\author{Patrick G\'erard}
\address{P. G\'erard,  Laboratoire de Math\'ematiques d'Orsay, CNRS, Universit\'e Paris-Saclay, 91405 Orsay, France.}%
\email{patrick.gerard@universite-paris-saclay.fr}

\author{Enno Lenzmann}
\address{E. Lenzmann, University of Basel, Department of Mathematics and Computer Science, Spiegelgasse 1, CH-4051 Basel, Switzerland.}%
\email{enno.lenzmann@unibas.ch}

\subjclass[2010]{ 37K15 primary, 47B35 secondary}

\date{\today}

\maketitle

\begin{abstract}
We study the Calogero--Moser derivative NLS equation
$$
\ii \pt_t u +\pt_{xx} u + (D+|D|)(|u|^2) u =0
$$
posed on the Hardy-Sobolev space $H^s_+(\R)$ with suitable $s>0$. By using a Lax pair structure for this $L^2$-critical equation, we prove global well-posedness for $s \geq 1$ and initial data with sub-critical or critical $L^2$-mass $\| u_0 \|_{L^2}^2 \leq 2 \pi$. Moreover, we prove uniqueness of ground states and also classify all traveling solitary waves. Finally, we study in detail the class of multi-soliton solutions $u(t)$ and we prove that they exhibit energy cascades in the following strong sense such that $\|u(t)\|_{H^s} \sim_s |t|^{2s}$ as $t \to \pm \infty$ for every $s > 0$.
\end{abstract}

\setcounter{tocdepth}{1}
\tableofcontents

\section{Introduction and Main Results}

\label{sec:intro}

This paper is devoted to the study of the {\em Calogero--Moser derivative nonlinear Schr\"odinger equation}, which can be written as
\be \tag{CM-DNLS} \label{eq:NLS}
\ii \pt_t u +\pt_{xx} u +(D+|D|)(|u|^2) u=0
\ee
for $u :I \times \R \to \C$ with some time interval $I \subset \R$. Here and in what follows, we use the standard notation $D=-i\pt_x$ and hence $|D|$ denotes the Fourier multiplier with symbol $|\xi |$.

We remark that equation \eqref{eq:NLS} was introduced in \cite{AbBeWi-09} as a formal continuum limit of classical Calogero--Moser systems \cite{Mo-75, OlPe-76}. Also, prior to \cite{AbBeWi-09}, a defocusing version given by
\be \tag{INLS}
\ii \pt_t u +\pt_{xx} u - (D+|D|)(|u|^2)  = 0 
\ee
was introduced in \cite{PeGr-95} under the name {\em intermediate nonlinear Schr\"odinger equation (INLS)}, as describing envelope waves in a deep stratified fluid. We will concentrate on \eqref{eq:NLS}, because it offers richer dynamics, e.\,g.~multi-soliton solutions with turbulence in Sobolev norms (see Theorem \ref{thm:growth_intro} below). However, part of our results can be extended to this defocusing version above.

\subsection{Symmetries, Phase Space, and Hamiltonian Features}
We observe that equation \eqref{eq:NLS} admits the invariance by phase, scaling and translation,
$$u(t,x)\mapsto \mathrm{e}^{i\theta} \lambda^{1/2}u(\lambda^2t, \lambda x+x_0), \quad x_0\in \R, \theta \in \R, \lambda >0,$$
which makes it a $L^2$-critical equation on the line. It also enjoys the Galilean invariance 
$$ %\label{def:galileo}
u(t,x)\mapsto \mathrm{e}^{i\eta x-it\eta ^2}u(t,x-2t\eta ), \quad \eta \in \R,
$$
as well as the pseudo--conformal symmetry found by Ginibre and Velo for the $L^2$-critical NLS. Recall that a special case of this space-time transform reads
$$ %\label{def:ginibrevelo}
u(t,x)\mapsto \frac{1}{t^{1/2}}\mathrm{e}^{i\frac{x^2}{4t}}u\left (-\frac1t, \frac xt\right ).
$$

In what follows, we are interested in solutions of \eqref{eq:NLS} satisfying the additional condition that
$$u(t)\in H^s_+(\R ):=\{ f\in H^s(\R ) : \mathrm{supp} (\hat f)\subset [0,+\infty )\},$$
where $H^s(\R )$ denotes the usual Sobolev space based on $L^2(\R)$. The spaces $H^s_+(\R)$ will serve as phase spaces on which we study \eqref{eq:NLS} as a Hamiltonian system.  

Recall that $H^0_+(\R )=L^2_+(\R )$ denotes the Hardy space of holomorphic functions on the complex upper half-plane. If we let $\Pi_+ : L^2(\R) \to L^2_+(\R)$ denote Cauchy--Szeg\H{o} orthogonal projection onto $L^2_+(\R)$ given by  
$$
\Pi_+(f)(x) := \frac{1}{2\pi} \int_0^\infty e^{i\xi x} \hat{f}(\xi) \, d\xi ,
$$
then equation \eqref{eq:NLS} can be written as
$$
\ii \pt_t u +\pt_{xx} u +2D_+(|u|^2) u=0.
$$
Here $D_+:=D\Pi_+$ can be seen as the compression of $D=-i\pt_x$ onto the Hardy space $L^2_+(\R)$. The positive Fourier frequency condition $\mathrm{supp} (\hat f)\subset [0,+\infty )$ is interpreted as a chirality condition in \cite{AbBeWi-09}. In fact, such a condition naturally appears  if one thinks of the Benjamin--Ono equation,
\be \label{eq:BO} \tag{BO}
\pt_tv+\pt_x|D|v-\partial_x(v^2)=0,
\ee
which is known to be well-posed  for real valued functions $v$; see \cite{MoPi-12, KiLaVi-23}. Introducing the new unknown $u=\Pi_+v$, the condition $v=\overline v$ reads $v=u+\overline u$, so that (BO) is equivalent to
$$
\ii \pt_t u+\partial_{xx} u+D(u^2)+2D_+(|u|^2)=0.
$$
This way, \eqref{eq:NLS} and its defocusing sibling can be seen as $L^2$-critical versions of (BO). 

Notice that the pseudo-conformal symmetry does not preserve chirality and that the Galilean transformation acts on chiral solutions of \eqref{eq:NLS} only if $\eta \geq 0$.\\
 
 %\subsection{Hamiltonian Features}
Now, let us come to the Hamiltonian properties of \eqref{eq:NLS}. To this end, we introduce the following gauge transformation
 \be\label{def:gauge}
 v(x):=u(x)\, \mathrm{e}^{-\frac{i}{2} \int_{-\infty}^x |u(y)|^2 \, dy},
 \ee
 which turns out to be a diffeomorphism of $H^s(\R )$ into itself for every $s\geq 0$. An elementary calculation shows that \eqref{eq:NLS} is equivalent to
 the equation 
 \be \label{eq:QDNLS}
 \ii\pt_tv+\pt_{xx} v+|D|(|v|^2)v-\frac 14 |v|^4v=0.
 \ee
This is a Hamiltonian PDE with the standard symplectic form $\omega(h_1,h_2) = \mathrm{Im}\inner {h_1}{h_2} _{L^2}$ and the energy functional
$$
\widetilde E(v):=\frac 12 \Vert \partial_xv\Vert_{L^2} ^2-\frac 14 \inner{|D|(|v|^2)}{|v|^2}_{L^2}+\frac 1{24}\Vert v\Vert_{L^6}^6.
$$
By classical product identities for the Hilbert transform $\Hil$ (see Appendix \ref{app:misc} for details), the energy functional $\widetilde E$ can be written as 
$$
\widetilde E(v) = \frac{1}{2} \int_\R |\pt_x v + \frac{1}{2} \Hil(|v|^2) v|^2 \, dx \geq 0.
$$
Inverting the gauge transformation \eqref{def:gauge} and in view of $\Pi_+ = \frac{1}{2}(1+i \Hil)$, we find that $E(u) = \widetilde E(v)$ is an energy functional for \eqref{eq:NLS} given by
\be \label{def:E}
E(u) = \frac{1}{2} \int_{\R} |\pt_x u - i \Pi_+(|u|^2) u|^2 \, dx .
\ee

In summary, we deduce that \eqref{eq:NLS} is a Hamiltonian equation generated by the energy functional $E(u)$ and the symplectic form 
$$ %\label{def:omega}
\omega_u^\sharp(h_1,h_2):=\mathrm{Im}\inner{h_1}{h_2}_{L^2}+\int \!\!\int_{\R\times \R}\mathrm{Re}(\ov uh_1)(x)\mathrm{Re}(\ov uh_2)(y)\mathrm{sgn}(x-y)\, dx\, dy ,
$$
which is just the pullback of the standard symplectic $\omega$ under the gauge transformation $u \mapsto v$ defined in \eqref{def:gauge}. Recall that we will study \eqref{eq:NLS} as Hamiltonian PDE on the phase spaces $H^s_+(\R)$ corresponding to chiral solutions. It is interesting to note that $\omega_u^\sharp$ provides a non-standard symplectic form on the spaces $H^s_+(\R)$ with $s \geq 0$.\\

Next, we discuss the conservation laws exhibited by \eqref{eq:NLS}. Due to symmetry by complex phase shifts, spatial translations and its Hamiltonian nature, we easily obtain the following conserved quantities:
$$
    M(u) = \int_\R |u|^2 \, dx \quad \mbox{($L^2$-mass)}, \qquad P(u) = \int_\R (D u \ov{u} - \frac{1}{2} |u|^4 ) \, dx \quad \mbox{(Momentum)},
$$
$$
E(u) =  \frac{1}{2} \int_{\R} |\pt_x u - i \Pi_+(|u|^2) u|^2 \, dx \quad \mbox{(Energy)}.
$$
In the expression for the conserved momentum $P(u)$, the nonlinear term $|u|^4$ arises due to the non-standard symplectic structure $\omega_u^\sharp$. In fact, we will show below that  $M(u), P(u)$ and $E(u)$ belong to an {\em infinite hierarchy of conservation laws} $\{ I_k(u) \}_{k=0}^\infty$ based on a Lax pair structure for \eqref{eq:NLS}; see Section \ref{sec:WP}.\\

Finally, we briefly comment on the $L^2$-critical nature of \eqref{eq:NLS}. As one may expect, there exists a special solution which separates solutions into small and large data. Indeed, we will prove that the energy $E(u)$ has a {\em unique} (up to symmetries) minimizer given by the rational function
$$
\RR(x) = \frac{\sqrt{2}}{x+i} \in H^1_+(\R),
$$
which we refer to as the \textbf{ground state} for \eqref{eq:NLS}; see Section \ref{sec:gs}. An elementary calculation shows that $u(t,x)=\RR(x)$ provides a {\em static solution} of \eqref{eq:NLS} and its $L^2$-mass is found to be
$$
M(\RR) = \int_\R \frac{2}{1+x^2} \, dx  = 2\pi.
$$
As we will see below, this number $2 \pi$ provides a threshold in the analysis of \eqref{eq:NLS}. Consequently, we shall refer to solutions $u(t) \in H^s_+(\R)$ with 
$$
M(u_0) < M(\RR), \quad M(u_0) = M(\RR), \quad M(u_0) > M(\RR)
$$
as having sub-critical, critical, and super-critical $L^2$-masses, respectively. The main results of this paper will address these various regimes.

\subsection{Main Results}

As a staring point, we first establish local well-posedness of \eqref{eq:NLS} for initial data in $H^s_+(\R)$ with $s > \sfrac{1}{2}$. For sufficiently regular initial data in $H^s_+(\R)$ with $s > \sfrac{3}{2}$, this follows from Kato's classical iteration scheme for quasilinear evolution equations. Extending the local well-posedness to less regular data in $H^{s}_+(\R)$ with $\sfrac{1}{2} < s \leq \sfrac{3}{2}$ can then be achieved by adapting arguments from \cite{MoPi-10}, which in turn is inspired by Tao's gauge trick for the Benjamin-Ono equation \cite{Ta-04}. However, for the rest of the paper, we will be mainly be concerned with solutions of \eqref{eq:NLS} such that $u(t) \in H^s_+(\R)$ with some integer $s \geq 1$.

The general question of global well-posedness for \eqref{eq:NLS} seems to be rather delicate because of the focusing $L^2$-criticality of the nonlinearity, which might generate blowup of solutions in finite time. The following result establishes global well-posedness for initial data with finite energy and $L^2$-mass that is less or equal to the ground state mass.

\begin{thm}[Global Well-Posedness Result] \label{thm:gwp} Let $s \geq 1$ be an integer. Then \eqref{eq:NLS} is globally well-posed for initial data in $u_0 \in H^s_+(\R)$ with $L^2$-mass
$$
M(u_0) \leq M(\RR) = 2 \pi.
$$
Moreover, we have the a-priori bound
$$
\sup_{t \in \R} \|u(t)\|_{H^s} < +\infty,
$$
provided the strict inequality $M(u_0) < M(\RR)$ holds.
\end{thm}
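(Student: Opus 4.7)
The plan is to reduce global well-posedness to an a priori bound on $\|u(t)\|_{H^s}$: combined with the blow-up alternative provided by the local theory sketched just above, such a bound rules out finite-time singularities, and when $M(u_0)<M(\RR)$ strictly, the bound will be uniform in $t\in\R$, giving the second assertion. The argument naturally splits into (i) an $H^1$-bound based on a sharp coercivity estimate for the energy, (ii) propagation to $H^s$ with $s\ge 2$ via the infinite hierarchy $\{I_k(u)\}_{k=0}^\infty$ from the Lax pair, and (iii) a separate treatment of the borderline case $M(u_0)=M(\RR)$.

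For step (i), I would pass to the gauge-transformed variable $v$ defined by \eqref{def:gauge} and exploit the sum-of-squares representation
\[
\widetilde E(v) \;=\; \tfrac{1}{2}\int_{\R}\bigl|\pt_x v+\tfrac{1}{2}\Hil(|v|^2)\,v\bigr|^2\,dx \;\geq\;0.
\]
The key sharp Gagliardo--Nirenberg-type estimate to be proven is the existence of a continuous function $\rho:[0,M(\RR))\to (0,1]$ with $\rho(0)=1$ such that
\[
\widetilde E(v)\ \geq\ \rho(M(v))\cdot\tfrac{1}{2}\|\pt_x v\|_{L^2}^2 \qquad\text{for all } v\in H^1(\R)\text{ with } M(v)<M(\RR).
\]
I would establish this by contradiction and concentration-compactness: a minimizing sequence for the ratio $\widetilde E(v)/\|\pt_x v\|_{L^2}^2$ at fixed subcritical mass $m$, spatially rescaled so that $\|\pt_x v\|_{L^2}$ is normalized, would produce via a profile decomposition a non-trivial weak limit $v_\infty \in H^1_+(\R)$ with $\widetilde E(v_\infty)\le 0$, hence $\widetilde E(v_\infty)=0$; the uniqueness of the ground state then forces $v_\infty=\RR$ up to symmetries, so that $M(v_\infty)=M(\RR)$, contradicting $M(v_\infty)\le m<M(\RR)$. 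With this coercivity in hand, the conservation of $M$ and $E$, together with the fact that \eqref{def:gauge} is a bi-Lipschitz $H^1$-diffeomorphism preserving the mass and intertwining $E$ with $\widetilde E$, yields the uniform-in-$t$ $H^1$ bound on $u(t)$.

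For step (ii), I would invoke the Lax-pair hierarchy $\{I_k(u)\}$. Anticipating the usual pattern for completely integrable equations, each $I_k$ should decompose as $\|u\|_{\dot H^k}^2+R_k(u)$ with the remainder $R_k$ a polynomial in derivatives of $u$ of order strictly less than $k$, controllable by lower Sobolev norms and the mass. An induction on $k$ then promotes the $H^1$-bound of step (i) to an $H^k$-bound, uniformly in $t\in\R$ throughout the subcritical regime. The main work here is algebraic: identifying explicitly the leading-order piece of each $I_k$ and estimating the remainder quantitatively.

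Step (iii), the critical case $M(u_0)=M(\RR)$, is the main obstacle: $\rho(m)\to 0$ as $m\uparrow M(\RR)$, and the theorem does not assert a uniform $H^s$-bound here, only global existence. To rule out finite-time blow-up I would again argue by contradiction: from a sequence $t_n\to T^*<\infty$ with $\|u(t_n)\|_{H^1}\to\infty$, set $\lambda_n:=\|\pt_x u(t_n)\|_{L^2}^{-1}\to 0$ and form the renormalized sequence $v_n(x):=\lambda_n^{1/2}u(t_n,\lambda_n x+x_n)$, which satisfies $\|\pt_x v_n\|_{L^2}=1$, $M(v_n)=M(\RR)$, and $E(v_n)=\lambda_n^2\,E(u_0)\to 0$. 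A profile decomposition adapted to the chiral Hardy--Sobolev setting, combined with the uniqueness of the ground state, identifies the limiting profile with $\RR$ up to symmetries. The ultimate contradiction should then come either from a further invariant of the Lax hierarchy that does not degenerate at critical mass (providing, for instance, a $\dot H^{1/2}$-type a priori bound incompatible with such concentration) or from a quantitative orbital stability statement around $\RR$. Setting up the profile decomposition in $H^1_+(\R)$ and extracting this final contradiction is where I expect the principal technical difficulty to lie.
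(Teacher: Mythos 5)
Your treatment of the sub-critical case is workable but takes a different route from the paper. You derive the $H^1$ bound from a mass-dependent coercivity estimate $\widetilde E(v)\geq \rho(M(v))\,\tfrac12\|\pt_x v\|_{L^2}^2$ proved by concentration-compactness; this inequality is indeed true and is essentially the contrapositive of the paper's own Minimal Mass Bubble Lemma (Lemma \ref{lem:MMB}), so the argument closes. The paper instead proceeds more directly: it uses the conserved quantity $I_1(u)=\langle Du,u\rangle-\|T_{\bar u}u\|_{L^2}^2$ together with the sharp, purely Fourier-side estimate $\|T_{\bar u}u\|_{L^2}^2\leq \tfrac{1}{2\pi}\|u\|_{L^2}^2\langle Du,u\rangle$ (Lemma \ref{lem:GN_basic}, a Cauchy--Schwarz computation on the half-line frequencies) to get an a priori $H^{1/2}$ bound, and only then invokes the energy to control $\|\pt_x u\|_{L^2}$. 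Your route buys nothing extra here and costs a compactness argument, but it is correct. In step (ii) note the indexing: $L_u$ is first order, so $I_k(u)=\langle L_u^k u,u\rangle=\|u\|_{\dot H^{k/2}}^2+\mbox{l.o.t.}$, not $\|u\|_{\dot H^k}^2$; otherwise the inductive promotion to higher norms matches the paper.

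The genuine gap is in the critical case $M(u_0)=M(\RR)$. You correctly set up the renormalized sequence and identify the limiting profile with $\RR$, but you do not have the mechanism that produces the contradiction, and neither of your two suggested alternatives works. A ``further invariant of the Lax hierarchy that does not degenerate at critical mass'' does not exist in the required form: the bound $\|T_{\bar u}u\|_{L^2}^2\leq\tfrac{1}{2\pi}\|u\|_{L^2}^2\langle Du,u\rangle$ saturates exactly on the ground state orbit (equivalently, $L_u$ acquires an eigenvalue precisely at mass $2\pi$, cf.\ Proposition \ref{prop:L_basic}), so the hierarchy loses coercivity at the threshold. And a ``quantitative orbital stability statement around $\RR$'' at critical mass is essentially equivalent to the non-existence of minimal mass blowup, i.e.\ to what is being proved, so invoking it is circular. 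The paper's actual mechanism is structural and rests on the slow algebraic decay of $\RR$: (a) the non-negativity $E(\mathrm{e}^{ia\psi}u)\geq 0$, expanded in $a$, yields a Banica-type flux inequality giving $L^2$-tightness and the localized virial bound $|\tfrac{d}{dt}g_R(t)|\lesssim\sqrt{g_R(t)}$; (b) integrating this backwards from the concentration times $t_n\to T^-$ shows $\int|x|^2|u(t,x)|^2\,dx\lesssim (T-t)^2$, so $u_0\in\Sigma$ has finite variance; (c) the pseudo-conformal identity $8t^2E(\mathrm{e}^{i|x|^2/4t}u_0)=\int|x|^2|u(t,x)|^2\,dx$ then forces $E(\mathrm{e}^{i|x|^2/4T}u_0)=0$, hence $\mathrm{e}^{i|x|^2/4T}u_0=\RR$ up to symmetries by Lemma \ref{lem:R_uniq} --- contradicting $\int_\R|x|^2|\RR(x)|^2\,dx=+\infty$. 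Without steps (a)--(c), or some substitute for them, your proof of the borderline case does not close.
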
 

\begin{remarks}
1) In the case of sub-critical $L^2$-mass, the a-priori bounds on $\| u(t) \|_{H^s}$ will follow from exploiting an infinite hierarchy of conservation laws for \eqref{eq:NLS}. We refer to Section \ref{sec:lax} for a detailed discussion.

2) The case of critical $L^2$-mass when $M(u_0) = M(\RR)$ is rather delicate to handle and will follow from ruling out the so-called minimal mass blowup solutions for \eqref{eq:NLS} with finite energy. A key element in the proof will be the slow algebraic decay of the ground states $\RR$. 

3) It is an interesting open question whether global-in-time existence holds for large initial data in $H^s_+(\R)$ with $s \geq 1$ and $L^2$-mass $M(u_0) > M(\RR)$. As a striking example below, there exist smooth global-in-time solutions for \eqref{eq:NLS} given by multi-solitons, which always blowup in infinite time due to unbounded growth of all Sobolev norms $\| u(t) \|_{H^s}$ for any $s>0$. 

4) By applying the pseudo-conformal transformation to the static solution $\RR(x)$, we obtain the explicit solution
$$
u_{\mathrm{sing}}(t,x) = \frac{1}{t^{1/2}} e^{i x^2/4t} \RR \left ( \frac{x}{t} \right ) \in L^2(\R) \quad \mbox{for all $t>0$},
$$
which solves \eqref{eq:NLS} and becomes singular as $t \to 0^-$. Due to slow algebraic decay of $\RR(x)$, we find that $u_{\mathrm{sing}}(t) \not \in H^1(\R)$ has no finite energy\footnote{A closer inspection shows that $u_{\mathrm{sing}}(t,\cdot) \in H^s(\R)$ for all $0 \leq s < \sfrac{1}{2}$.} and, moreover, we see that the solution $u_{\mathrm{sing}}(t) \not \in L^2_+(\R)$ fails to be chiral. Still, this explicit example shows that we cannot expect global well-posedness for \eqref{eq:NLS} with arbitrary initial data in the scaling-critical space $L^2(\R)$. It remains an intriguing open question if initial data in $L^2_+(\R)$ will always lead to global-in-time solutions for \eqref{eq:NLS}.
\end{remarks}

Next, we turn our attention to sufficiently regular solutions $u(t)\in H^s_+(\R)$ of \eqref{eq:NLS} with initial data having critical or super-critical $L^2$-mass:
$$
M(u_0) \geq M(\RR) = 2\pi.
$$
In this regime of sufficiently large data, we expect \eqref{eq:NLS} to possess traveling ground state solitons as well as multi-soliton solutions. As a main result in this setting, we completely classify all traveling solitary waves for the Calogero--Moser DNLS with finite energy by showing that are given by the ground state $\RR(x)$ up to scaling, phase, translation, and Galilean boosts prserving the chirality condition.

\begin{thm}[Classification of Traveling Solitary Waves] \label{thm:trav_wave}
Every traveling solitary wave for equation \eqref{eq:NLS} in $H^1_+(\R)$ is of the form
$$
u(t,x) = e^{\ii \theta + \ii \eta x - \ii \eta^2 t} \lambda^{1/2} \RR(\lambda (x-2 \eta t) + y)
$$
with some $\theta \in [0, 2\pi), y \in \R, \lambda > 0$, and $\eta \geq 0$. 

In particular, every traveling solitary waves $u(t) \in H^1_+(\R)$ for \eqref{eq:NLS} have critical $L^2$-mass $M(u) = M(\RR)$.
\end{thm}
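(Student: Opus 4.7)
The plan is to reduce the travelling-wave equation first to a first-order Bogomolny-type identity and then, via an explicit gauge factor, to the stationary ground-state problem, so that the uniqueness theorem from Section \ref{sec:gs} can be applied.

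First, I insert the ansatz $u(t,x) = e^{\ii \omega t} Q(x - ct)$ with $Q \in H^1_+(\R)$ into \eqref{eq:NLS}. Using $(D+|D|)(|u|^2) = 2 D_+(|u|^2)$, the profile $Q$ satisfies the elliptic equation
\begin{equation*}
Q'' + c\, D Q - \omega\, Q + 2 D_+(|Q|^2) Q = 0,
\end{equation*}
which expresses $Q$ as a critical point on $H^1_+(\R)$ of the action $\mathcal{S}(Q) := E(Q) - (c/2) P(Q) - (\omega/2) M(Q)$.

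Second, I read off two Pohozaev identities by testing the stationarity of $\mathcal{S}$ against the admissible symmetries of the phase space. Under the $L^2$-critical rescaling $Q_\lambda(x) := \lambda^{1/2} Q(\lambda x)$ ($\lambda > 0$) one has $E(Q_\lambda) = \lambda^2 E(Q)$, $P(Q_\lambda) = \lambda P(Q)$, $M(Q_\lambda) = M(Q)$; under the chirality-preserving Galilean boost $Q_{\eta'}(x) := e^{\ii \eta' x} Q(x)$ ($\eta' \geq 0$) one has $E(Q_{\eta'}) = E(Q) + \eta' P(Q) + ((\eta')^2/2) M(Q)$ and $P(Q_{\eta'}) = P(Q) + \eta' M(Q)$. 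Setting $\partial_\lambda \mathcal{S}(Q_\lambda)|_{\lambda=1} = 0$ and $\partial_{\eta'}\mathcal{S}(Q_{\eta'})|_{\eta'=0} = 0$ gives
\begin{equation*}
4 E(Q) = c\, P(Q), \qquad P(Q) = \tfrac{c}{2}\, M(Q).
\end{equation*}

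Third, I observe the algebraic sum-of-squares identity
\begin{equation*}
E_\eta(Q) := \tfrac{1}{2} \bigl\| \partial_x Q - \ii ( \Pi_+(|Q|^2) + \eta ) Q \bigr\|_{L^2}^2 = E(Q) - \eta\, P(Q) + \tfrac{\eta^2}{2}\, M(Q) \geq 0 \quad (\eta \in \R),
\end{equation*}
which I will verify by direct expansion using $\mathrm{Re}\,\Pi_+(|Q|^2) = |Q|^2/2$ and the definition of $P$. Choosing $\eta = c/2$ and substituting the Pohozaev identities makes the right-hand side vanish, so
\begin{equation*}
\partial_x Q = \ii \bigl( \Pi_+(|Q|^2) + \tfrac{c}{2} \bigr) Q.
\end{equation*}
Setting $v(x) := e^{-\ii (c/2) x} Q(x)$ and using $|v|^2 = |Q|^2$, a one-line computation converts this into the zero-shift BPS equation $v' = \ii \Pi_+(|v|^2) v$, which is equivalent to $E(v) = 0$ in view of the introductory representation $E(u) = \tfrac{1}{2}\|\partial_x u - \ii \Pi_+(|u|^2) u\|_{L^2}^2$. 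Comparing second derivatives of $Q$ computed from the BPS identity with the elliptic equation, and letting $|x| \to \infty$, also pins down $\omega = c^2/4$.

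Finally, I classify $H^1(\R)$ solutions of $v' = \ii \Pi_+(|v|^2) v$. Since $v$ is non-vanishing (uniqueness for the linear scalar ODE $v' = \ii F v$) and $F := \Pi_+(|v|^2)$ is holomorphic on $\mathbb{C}_+$, the formula $v(z) = v(x_0) \exp\bigl( \ii \int_{x_0}^z F(w)\, dw \bigr)$ extends $v$ to a holomorphic function on the upper half-plane; a Hardy-class bound based on the decay of $F$ as $\mathrm{Im}\, z \to \infty$ upgrades this extension to $v \in H^1_+(\R)$, and hence $\mathrm{supp}(\hat Q) \subset [c/2, \infty)$, forcing $c \geq 0$. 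Once $v \in H^1_+(\R)$, the uniqueness of ground states (Section \ref{sec:gs}) yields $v(x) = \lambda^{1/2} e^{\ii \theta} \RR(\lambda(x - y))$ for some $\lambda > 0$, $\theta \in [0, 2\pi)$, $y \in \R$. Setting $\eta := c/2 \geq 0$ and unwinding $Q = e^{\ii \eta x} v$ together with $u(t,x) = e^{\ii \omega t} Q(x - ct)$ with $\omega = \eta^2$ reproduces the announced formula, and $M(u) = M(\RR) = 2\pi$ is immediate.

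\textbf{Main obstacle.} The most delicate step is the Hardy-class promotion of the BPS solution $v$ from $H^1(\R)$ to $H^1_+(\R)$: this is what unlocks the uniqueness of ground states in Section \ref{sec:gs}, which is formulated within the Hardy phase space. The Pohozaev identities and the sum-of-squares identity for $E_\eta$, by contrast, are direct computations once the variational structure has been set up.
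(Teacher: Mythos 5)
Your overall strategy is the same as the paper's: reduce to the statement that the (suitably boosted) energy of the profile vanishes via Pohozaev-type identities, and then invoke the uniqueness lemma for nontrivial solutions of $Du-\Pi_+(|u|^2)u=0$. The completing-the-square identity $E_\eta(Q)=E(Q)-\eta P(Q)+\tfrac{\eta^2}{2}M(Q)$ is correct and is a nice repackaging, and your value $\omega=c^2/4$ is consistent with the Galilean boost of the static solution. However, two steps are asserted rather than proved, and both are genuinely delicate here. First, the claim that $Q$ is a critical point of $\mathcal{S}=E-\tfrac{c}{2}P-\tfrac{\omega}{2}M$ \emph{with respect to the standard $L^2$ pairing} is not a routine Lagrange-multiplier check for this equation: $P$ contains the quartic term $-\tfrac12\int|u|^4$ precisely because it is the momentum map for the \emph{non-standard} symplectic form $\omega^\sharp$, so the identification of the profile equation with $d\mathcal{S}(Q)=0$ has to be verified (it is cleanest after the gauge transform $\Phi$, which turns $P$ into the standard momentum $\langle Dv,v\rangle$ and $\omega^\sharp$ into the standard form). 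Second, and more seriously, the boost identity $P(Q)=\tfrac{c}{2}M(Q)$ is obtained by differentiating $\mathcal{S}$ along the curve $\eta'\mapsto e^{i\eta' x}Q$, whose tangent vector at $\eta'=0$ is $ixQ$. Since the ground state decays only like $1/x$, $ixQ$ is not in $L^2$, so $d\mathcal{S}(Q)[ixQ]$ is not defined and the chain-rule argument does not apply as stated; the identity is true for the actual solutions, but your derivation does not establish it. (The dilation direction $\tfrac12 Q+xQ'$ raises the same issue in milder form; the paper handles it by integrating the gauged stationary equation against $x\partial_x\ov S$ over $[-R,R]$ and passing to the limit along a subsequence, see Appendix C.1.) The paper sidesteps the boost identity entirely by performing the Galilean gauge $\RR_{v,\omega}=e^{ivx/2}\RR_{\tilde\omega}$ \emph{first}, so that only the stationary Pohozaev argument is needed.

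Two further remarks on the last step. The Hardy-class promotion of $v$ is unnecessary: Lemma \ref{lem:R_uniq} is stated and proved for $v\in H^1(\R)$ (chirality is a \emph{conclusion} there, obtained from the explicit formula), so once you have $v'=i\Pi_+(|v|^2)v$ you may quote it directly. If you do want the holomorphic extension, the mechanism is not the decay of $F=\Pi_+(|v|^2)$ but its positivity: $\mathrm{Re}\,F$ is the Poisson extension of $|v|^2/2\geq 0$, whence $|v(x+iy)|=|v(x)|\exp\bigl(-\int_0^y \mathrm{Re}\,F(x+it)\,dt\bigr)\leq|v(x)|$ and the $H^2(\C_+)$ bound follows; decay of $F$ alone only gives $\exp(C|x|^{1/2})$-type control, which is insufficient. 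Finally, the inference ``$\mathrm{supp}(\hat Q)\subset[c/2,\infty)$ forces $c\geq0$'' needs the explicit form of $\hat\RR$ (nonvanishing a.e.\ on $(0,\infty)$): the mere inclusion is compatible with $c<0$.
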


\begin{remarks}
1) The condition $\eta \geq 0$ enters through the chirality condition $u(t) \in H^1_+(\R)$ and thus traveling solitary waves can only move to right. If we take negative values $\eta <0$ above, we obtain left-moving traveling solitary waves $u(t) \in H^1(\R)$ solving \eqref{eq:NLS}; see Section \ref{sec:gs} for the definition of traveling solitary waves.

2) A key step in the complete classification above is to establish {\em uniqueness} of (non-trivial) minimizers of the energy $E(u)$, which is equivalent to classifying all solutions $u \in H^1_+(\R)$ of the nonlinear equation
$$
D u - \Pi_+(|u|^2) u = 0. 
$$
We refer to Section \ref{sec:gs} below for details including a more general result assuming only that $u \in H^1(\R)$.  
\end{remarks}

As our final main result, we study the dynamics of {\em multi-soliton solutions} for \eqref{eq:NLS}; see Section \ref{sec:dyn_sol} below for a precise definition using the Lax pair structure. For the Calogero--Moser DNLS, it turns out that multi-solitons $u=u(t,x)$ are rational functions of $x \in \R$ in the Hardy spaces $L^2_+(\R)$. As an interesting fact, we remark that they necessarily have a {\em quantized $L^2$-mass} given by
$$
M(u) = 2 \pi N \quad \mbox{with $N=1,2,3,\ldots$}
$$
In the special case when $N=1$, the multi-solitons are given by the ground state $\RR(x)$ up to symmetries. For $N \geq 2$, we note that multi-solitons have super-critical $L^2$-mass. As a consequence, the proof of their global-in-time existence is far from trivial and will follow from the analysis of a suitable inverse spectral formula based on the Lax structure. As an outcome, we obtain a detailed dynamical description in the long-time limit. Here, a surprising feature is the general `turbulent' behavior of multi-solitons with $N \geq 2$, leading to unbounded growth of higher Sobolev norms (energy cascades) as follows.      

\begin{thm}[Growth of Sobolev Norms] \label{thm:growth_intro}
For every $N \geq 2$, every multi-soliton $u=u(t,x)$ for \eqref{eq:NLS} exists for all times $t \in \R$ and it exhibits growth of Sobolev norms such that
$$
\| u(t) \|_{H^s} \sim_s |t|^{2s} \quad \mbox{as} \quad t \to \pm \infty,
$$
for any real number $s> 0$.
\end{thm}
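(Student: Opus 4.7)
The plan is to reduce the claimed growth to a sharp long-time analysis of the finite-dimensional integrable dynamics associated with the Lax pair of \eqref{eq:NLS}, using the explicit inverse spectral representation of multi-soliton solutions derived in the preceding sections. Since every multi-soliton $u(t,\cdot) \in L^2_+(\R)$ is, by chirality, a rational function of $x$ whose poles $z_j(t)$ all lie in the open lower half-plane $\C_-$, its $H^s$-norm is computable from the Hardy--Fourier representation
$$
\widehat u(t,\xi) = -2\pi \ii \inner{V}{e^{-\ii \xi X(t)} V}_{\C^N} \qquad (\xi > 0),
$$
where $X(t)$ is the $N\times N$ matrix constructed from the Lax operator whose spectrum recovers the poles $z_j(t)$ and $V \in \C^N$ encodes their residues. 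This converts the claim into a spectral question for $X(t)$ via
$$
\|u(t)\|_{H^s}^2 = 2\pi \int_0^\infty \xi^{2s} \bigl|\inner{V}{e^{-\ii\xi X(t)} V}\bigr|^2 d\xi.
$$

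From the Lax equation, I expect $X(t)$ to evolve essentially as $X(t) = X_0 + 2tY + O(1)$, with $Y$ a Hermitian matrix built from the conserved Lax spectrum and having distinct real eigenvalues $\mu_1 < \cdots < \mu_N$ (the asymptotic soliton velocities). Exploiting the rank-one commutator identity $[X,X^*] = \ii (\mathbb I - P)$ characteristic of Calogero--Moser matrices (preserved along the flow), combined with the $L^2$-mass quantization $M(u) = 2\pi N$, a refined analysis should yield the sharp asymptotics
$$
\mathrm{Re}\, z_j(t) \sim 2t\mu_j, \qquad |\mathrm{Im}\, z_j(t)| \sim |t|^{-2}, \qquad |c_j(t)| \sim |t|^{-1}
$$
as $t \to \pm\infty$. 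Substituting into the $H^s$-integral, the dominant per-pole contribution is $|c_j(t)|^2/|\mathrm{Im}\, z_j(t)|^{2s+1} \sim |t|^{4s}$, producing the upper bound $\|u(t)\|_{H^s} \lesssim_s |t|^{2s}$. For the matching lower bound, the distinct linear velocities $2\mu_j$ force the off-diagonal terms in $|\widehat u(t,\xi)|^2$ to oscillate in $\xi$ on the critical frequency scale $\xi \sim |t|^{-2}$ that carries the bulk of the $H^s$-integral, and thus wash out after integration.

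The main obstacle is establishing the fine rate $|\mathrm{Im}\, z_j(t)| = O(|t|^{-2})$: generic first-order matrix perturbation applied to $X(t) = X_0 + 2tY$ only delivers $O(|t|^{-1})$, and the extra order of cancellation is specific to the Calogero--Moser integrable structure. Extracting it should rely on the rank-one commutator constraint between $X$ and $X^*$ together with the conservation of higher members of the hierarchy $\{I_k(u)\}$ constructed from the Lax pair, which impose rigid algebraic relations among the leading terms in the asymptotic expansion of $z_j(t)$. A secondary but important point is that $|c_j(t)|$ must not decay strictly faster than $|t|^{-1}$; this is guaranteed by the mass quantization $M(u) = 2\pi N$, which via $\|u\|_{L^2}^2 = \pi\sum_j |c_j|^2/|\mathrm{Im}\, z_j|$ in the regime of well-separated real parts forces $|c_j|^2 \asymp 2|\mathrm{Im}\, z_j|$ and hence $|c_j(t)| \asymp |t|^{-1}$, consistent with the picture of each soliton concentrating around a moving real-axis point at the pseudo-conformal rate $\lambda(t) \sim t^2$.
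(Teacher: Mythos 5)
Your overall strategy --- represent the multi-soliton via the inverse spectral formula as a matrix resolvent, reduce the $H^s$-asymptotics to eigenvalue asymptotics of the linearly evolving matrix $\Ms(t)=2\Vs t+\Ws$, and sum per-pole contributions with cross terms killed by oscillation --- is exactly the paper's route. But the step you yourself flag as ``the main obstacle'', namely the rate $\mathrm{Im}\,z_j(t)=O(t^{-2})$, is left unproved, and it is precisely the content of Lemma \ref{lem:z_largetime}. The paper obtains it not from a commutator identity $[X,X^*]=\ii(I-P)$ nor from higher conservation laws, but from explicit \emph{third-order} Rayleigh--Schr\"odinger perturbation theory for $\As+\eps\Bs$ with $\eps=t^{-1}$, $\As=2\Vs=2\,\mathrm{diag}(\lambda_1,\dots,\lambda_N)$ and $\Bs=\Ws$. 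The decisive structural fact is that the anti-Hermitian part of $\Ws$ is rank one and concentrated at the single diagonal entry indexed by the zero eigenvalue $\lambda_1=0$ of $L_u$, i.e.\ $(\Ws-\Ws^*)_{jk}/(2\ii)=-2\rho\,\delta_{1j}\delta_{1k}$; consequently, for $k\geq 2$ the first- and second-order eigenvalue corrections are real, and the imaginary part first appears at order $\eps^3$, giving $\mathrm{Im}\,z_k(t)=-\rho/(4\lambda_k^4t^2)+O(t^{-3})$. Without this (or an equivalent) computation your argument has no proof of its central estimate.

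Second, your claimed asymptotics are wrong for one pole, in a way that is actually inconsistent with the conserved quantities. The pole $z_1(t)$ attached to $\lambda_1=0$ does \emph{not} approach the real axis: it converges to $\gamma_1-\ii\rho$ with $\rho>0$, and its residue $a_1(t)$ converges to a nonzero constant $A$. Indeed $\sum_j a_j(t)=\hat u(t,0^+)/(-2\pi \ii)$ is conserved and nonzero for a multi-soliton, so the residues cannot all decay like $|t|^{-1}$; mass conservation then prevents the corresponding pole from collapsing onto $\R$. This error does not change the final growth rate (the first pole contributes only $O(1)$ to $\|u(t)\|_{H^s}^2$, dominated by the $|t|^{4s}$ coming from the other $N-1$ poles), but the picture ``every soliton concentrates at the pseudo-conformal rate'' is not the correct dynamics, and the relation $|c_j|^2\asymp 2|\mathrm{Im}\,z_j|$ for \emph{all} $j$, on which your lower bound rests, is a false premise for $j=1$. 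The paper instead derives $a_k(t)=\alpha/(\lambda_k^2 t)+O(t^{-2})$ for $k\geq 2$ and $a_1(t)\to A\neq 0$ from the Calogero--Moser ODE system for the residues combined with the conservation of $\hat u(0^+)$, and only then evaluates the $H^s$-inner products pole by pole.
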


\begin{remarks*}
1) In Section \ref{sec:dyn_sol}, we make a detailed analysis of the dynamics of multi-solitons. After having established their global-in-time existence, we show that there exists a sufficiently large time $T=T(u_0) \gg 1$ such that a multi-soliton reads
\be \label{eq:multi_sol}
u(t,x) = \sum_{k=1}^N \frac{a_k(t)}{x-z_k(t)} \quad \mbox{for $t \geq T$},
\ee
with residues $a_1(t), \ldots, a_N(t) \in \C$ and pairwise distinct poles $z_1(t),\ldots, z_N(t) \in \C_-$ that satisfy a complexified version of the rational Calogero--Moser system for $N$ classical particles. A detailed investigation (exploiting on the Lax pair structure) then yields that the poles -- except for $z_1(t)$ -- will all approach the real axis asymptotically, i.\,e., 
$$
\mathrm{Im} \, z_k(t) \to 0 \quad \mbox{as} \quad  t \to +\infty  \quad \mbox{for $2 \leq k \leq N$}. 
$$
A careful analysis of this fact then leads to the precise growth bound in Theorem \ref{thm:growth_intro}. The limit $t \to -\infty$ can be handled in the same way.  

2) It is a subtle fact that multi-solitons $u(t,x)$ may fail to be of the form \eqref{eq:multi_sol} for {\em all} times $t\in \R$. That is, we can have collisions of poles in finite time, which renders the form \eqref{eq:multi_sol} invalid. To handle this collision scenario (see explicit examples for $N=2$ in Section \ref{sec:dyn_sol}), we will make use of a general representation formula of $u(t,x)$ in terms of an inverse spectral formula. See Section \ref{sec:dyn_sol} for details.   

3) It is an interesting open question whether the growth phenomenon in Theorem \ref{thm:growth_intro} is stable under perturbations of multi-solitons. 
\end{remarks*}

\subsection{Comments on the Lax Structure}
A central feature of \eqref{eq:NLS} is the fact that it admits a Lax pair. That is, as detailed in Section \ref{sec:WP}, we can recast the dynamical evolution into commutator form  
\be \label{eq:Lax_intro}
\frac{d}{dt} L_u = [B_u, L_u],
\ee
where the Lax operator $L_u$ is given by
\be
L_u = D - T_u T_{\bar{u}}.
\ee
This defines an unbounded self-adjoint operator acting the Hardy space $L^2_+(\R)$ with a suitable operator domain, depending on the regularity of $u$. Here $T_b(f) = \Pi_+(bf)$ denotes the Toeplitz operator on $L^2_+(\R)$ with symbol $b$. 

As an important consequence of \eqref{eq:Lax_intro}, we will find an {\em infinite hierarchy of conservation laws} in terms of expressions
$$
I_k(u) = \langle L^k_u u, u \rangle \quad \mbox{with} \quad k=0,1,2,\ldots
$$
provided that $u(t) \in H^s_+(\R)$ is a sufficiently regular solution of \eqref{eq:NLS}. It is an intriguing feature, due to the $L^2$-criticality of the problem, that the hierarchy $\{ I_k(u) \}_{k \in \N}$ will generally provide a-priori bounds on solutions only if we have sub-critical $L^2$-mass; see Theorem \ref{thm:gwp}. This is in striking contrast to many other completely integrable PDEs (e.\,g.~KdV, Benjamin-Ono, cubic NLS etc.) where the corresponding hierarchy of conservation laws yields control over any smooth solutions without any assumption on its size.\\

In Section \ref{sec:lax}, we examine the spectral properties of the Lax operator $L_u$ in more detail. Based on a key commutator formula, we find a sharp bound on the number of eigenvalues $N$ of the form
\be \label{ineq:N}
N \leq \frac{\|u \|_{L^2}^2}{2 \pi}.
\ee
Furthermore, we prove that every eigenvalue of $L_u$ is simple. As interesting aside, we remark that this bound not only applies to isolated eigenvalues, but also to eigenvalues which are embedded in the essential spectrum $\sigma_{\mathrm{ess}}(L) = [0,\infty)$. Moreover, we emphasize the fact that we can easily generate embedded eigenvalues of $L_u$ by action of the Beurling--Lax semigroup $\{ e^{i \eta x} \}_{\eta \geq 0}$ acting on $L^2_+(\R)$; see Section \ref{sec:lax} again.

In terms of spectral theory, it is a natural question to study which potentials $u \in L^2_+(\R)$ will lead to equality in the general bound \eqref{ineq:N}. Here we will find a distinguished class of potentials given by rational functions of the form
$$
u(x) = \frac{P(x)}{Q(x)} \in H^1_+(\R),
$$
where $Q,P \in \C[x]$ are suitable polynomials with $\deg Q = N$ and $\deg P \leq N-1$; see Proposition \ref{prop:spectralNsoliton}. We will refer to these $u(x)=P(x)/Q(x)$ as above as {\em multi-soliton potentials} and the corresponding solution will called {\em multi-solitons} for \eqref{eq:NLS}. As an immediate consequence of saturating the bound \eqref{ineq:N}, we obtain the multi-solitons $u(t,x)$ have quantized $L^2$-mass with 
$$
M(u) = 2 \pi N.
$$

Another noteworthy feature of any multi-soliton solution $u(t,x)$ is that it is {\em completely} supported in the pure point spectrum of the Lax operator, i.\,e, we have
$$
u(t) \in \mathcal{E}_{pp}(L_{u(t)}),
$$
where $\mathcal{E}_{pp}$ denotes the $N$-dimensional space spanned the eigenfunctions of $L_u$. This fact will allow us to derive a very explicit inverse spectral formula representing a multi-soliton. This will enable us to prove global-in-time existence and, more strikingly, the growth bounds states in Theorem \ref{thm:growth_intro}. In the future, we plan to further refine the spectral analysis of $L_u$ in order to study the long-time behavior of solutions of \eqref{eq:NLS} beyond the case of multi-solitons.\\

Finally, we remark that the Lax structure for \eqref{eq:NLS} bears some resemblance to the Lax structure for (BO), which is known to have the Lax operator 
$$
L_u^{\mathrm{(BO)}} = D - T_u
$$
acting on the Hardy space $L^2_+(\R)$; see e.\,g.~\cite{GeKa-21}. Note that the occurrence of $T_u$ in $L_u^{\mathrm{(BO)}}$ instead of $T_u T_{\bar{u}}$ in $L_u$ is consistent with the different degrees of the nonlinearity (quadratic vs.~cubic).

\subsection{Comparison to other PDEs}

Let us comment on the energy cascade phenomenon in Theorem \ref{thm:growth_intro} in comparison to other Hamiltonian PDEs on the line. 
Among the recently studied Hamiltonian PDEs on the line, the closest  one to \eqref{eq:NLS} is certainly the derivative nonlinear Schr\"odinger equation,
\be\tag{DNLS}\label{eq:DNLS}
i\pt_tu+\pt_{xx}u+i\pt_x(|u|^2u)=0 ,
\ee
which -- like \eqref{eq:NLS} --  is $L^2$-mass critical with a Lax pair structure. Using the Lax pair structure, global existence was first proved in \cite{JeLiPeSu-20} in the space $H^2(\R)\cap \hat H^2(\R)$. Then global well-posedness with uniform bounds in $H^s$, $s\geq 1/2$ was obtained in \cite{BaPe-22}, \cite{BaLePe-21}. Quite recently, the flow map was extended to the whole of $L^2(\R )$ in \cite{HaKiNtVi-22},
proving that all trajectories of \eqref{eq:DNLS} are uniformly equicontinuous with values in $L^2(\R)$. All these results prevent any kind of energy cascade and are therefore in strong contrast with the dynamics of \eqref{eq:NLS}, which turns out to be much richer.

Another integrable Hamiltonian PDE on the line is the cubic Szeg\H{o} equation, see \cite{Po-11},
\be \label{eq:Szego}
i\pt_tu=\Pi_+(|u|^2u) ,
\ee
where multi-solitons were recently studied in \cite{GePu-22}, and where energy cascades were displayed  under some degeneracy assumption of the spectrum of the corresponding Lax operator. There is definitely some similarity in the  approaches to multi-solitons in \eqref{eq:Szego} and \eqref{eq:NLS}, particularly in the inverse spectral formulae. However, let us emphasize that the spectral properties of the Lax operators  are very different, and that energy cascades for 
multi-solitons in \eqref{eq:Szego}  only occur under some degeneracy assumption, while they always occur for multi-solitons in \eqref{eq:NLS}. This suggests that the dynamics of \eqref{eq:NLS} is particularly  turbulent, even compared to the non-dispersive equation \eqref{eq:Szego}. We hope to explore other aspects of this dynamics in the  near future.

\subsection{Notation} We denote by $\inner f g = \int_\R f \ov{g}$ the $L^2$-inner product of functions $f, g$ on the line.  We recall that $\Pi_+$ denotes the orthogonal projector from $L^2(\R )$ onto the Hardy space $L^2_+(\R )$. Sometimes, we will also use the notation $\Pi_-=1-\Pi_+$.
Notice that, for every $L^2$ function $f$, 
$$\Pi_-(f)=\ov{\Pi_+(\ov f)}\ .$$
Finally, observe that the Hilbert transform $\Hil =-i\, \mathrm{sgn}(D)$ is related to $\Pi_\pm$ by the identities
$$\Pi_+=\frac12(1+i\Hil )\ ,\ \Pi_-=\frac 12(1-i\Hil )\ .$$

\subsection*{Acknowledgments} E.\,L.~acknowledges financial support from the Swiss National Science Foundation (SNSF) under Grant No.~204121.

\section{Well-Posedness, Lax Structure, and Conservation Laws}\label{sec:WP}

In this section, we study the Cauchy problem for \eqref{eq:NLS} in $H^s_+(\R)$ with suitable $s$. As a key element for obtaining global-in-time solutions, we will find a Lax pair structure on the Hardy-type space $H^s_+$, which will generate an infinite hierarchy of conservation laws. With this at hand, we will derive a-priori bounds for initial $u_0 \in H^s_+(\R)$ with integer $s \geq 1$ and sub-critical $L^2$-mass $M(u_0) < M(\RR)$. 

\subsection{Local Well-Posedness}

As starting point for local well-posedness, we consider the case of initial data in $H^s_+(\R)$ with $s > \sfrac{3}{2}$, where Kato's classical iterative scheme for quasilinear evolution equations can be utilized. We remark that the presence of the derivative term $D_+(|u|^2) u$ raises some analytic challenges that need to be addressed. 

\begin{prop} \label{prop:lwp_kato}
Let $s > \sfrac{3}{2}$. For any $R >0$, there is some $T(R) > 0$ such that, for every $u_0 \in H^s_+(\R)$ with $\|u_0 \|_{H^s} \leq R$, there exists a unique solution  $u \in C([-T,T];H^s_+(\R))$ of \eqref{eq:NLS} with $u(0) = u_0$. 

Furthermore, the $H^\sigma$-regularity of $u_0$ for $\sigma >s$ is propagated on the whole maximal interval of existence of $u$, and the flow map $u_0\mapsto u(t)$ is continuous on $H^s$.
\end{prop}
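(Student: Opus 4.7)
The plan is to carry out Kato's classical iteration scheme after first passing to the gauge-transformed unknown, which removes the most singular contribution from the $H^s$ energy identity. Concretely, I would set $v = u \exp\bigl(-\tfrac{\im}{2}\int_{-\infty}^x |u|^2 \diff y\bigr)$ as in \eqref{def:gauge}, which is a bi-Lipschitz diffeomorphism of $H^s_+(\R)$ for every $s \geq 0$ and satisfies
\begin{equation*}
\im \pt_t v + \pt_{xx} v + |D|(|v|^2) v - \tfrac{1}{4}|v|^4 v = 0.
\end{equation*}
The structural gain is that the principal coefficient $|D|(|v|^2)$ is now real-valued, so the would-be leading contribution $\mathrm{Im}\langle |D|(|v|^2) J^s v, J^s v \rangle$ vanishes identically in the energy identity.

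Approximate solutions $v_n$ are built by Friedrichs truncation: letting $P_n$ project on frequencies $|\xi| \leq n$, the $P_n$-truncated version is an ODE on the closed subspace $P_n L^2_+(\R)$, globally well-posed by Cauchy--Lipschitz. Applying $J^s = (1-\pt_{xx})^{s/2}$ and pairing with $J^s v_n$ in $L^2$, the dispersive term drops out, the reality of $|D|(|P_n v_n|^2)$ kills the principal term, the quintic perturbation is standard, and the commutator $[J^s, |D|(|P_n v_n|^2)]\, P_n v_n$ is controlled by Kato--Ponce fractional Leibniz together with the algebra property $H^s \cdot H^s \subset H^s$ (valid for $s > \sfrac{1}{2}$) and the embedding $H^s \hookrightarrow C^1$ (valid for $s > \sfrac{3}{2}$). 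This yields a polynomial differential inequality $\tfrac{d}{dt}\|v_n\|_{H^s}^2 \leq C\,P(\|v_n\|_{H^s})$ uniform in $n$, hence uniform $H^s$ bounds on a time interval $[-T(R),T(R)]$ by Gronwall. Combined with $\pt_t v_n \in L^\infty_t H^{s-2}_x$ from the equation, Aubin--Lions compactness extracts a limit $v \in L^\infty_t H^s_+ \cap C_t H^{s'}_+$ for any $s' < s$ solving the gauge-transformed equation.

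Uniqueness is established at the $L^2$ level: for two $H^s$ solutions $v_1, v_2$ with the same data, the difference $w = v_1 - v_2$ obeys $\tfrac{d}{dt}\|w\|_{L^2}^2 \leq C(\|v_1\|_{H^s},\|v_2\|_{H^s})\|w\|_{L^2}^2$, the coefficient being finite since $s > \sfrac{3}{2}$ gives $W^{1,\infty}$ control on $v_1, v_2$, and Gronwall closes. Weak continuity in time is then upgraded to strong $C_t H^s_+$ continuity via the usual norm-conservation argument for the regularized flows. Persistence of $H^\sigma$-regularity for $\sigma > s$ on the full maximal interval follows by rerunning the previous energy estimate at level $\sigma$ with a tame Gronwall constant depending only on $\|v\|_{H^s}$. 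Continuity of the flow map on $H^s$ is obtained via a Bona--Smith argument, approximating $u_0$ by $P_n u_0 \in \bigcap_\sigma H^\sigma_+$ and combining the uniform $H^s$ bound with the $L^2$-difference estimate; the conclusion transfers back to $u$ by continuity of the gauge. The main obstacle is the genuine quasilinearity of \eqref{eq:NLS}: the nonlinearity maps $H^s$ only into $H^{s-1}$, so no semilinear contraction is possible, and without the gauge the imaginary part $\mathrm{Im}(2D_+(|u|^2)) = -\pt_x(|u|^2)$ would reappear as a top-order coefficient whose associated commutator is not controllable by standard Kato--Ponce at this level of regularity; the gauge transformation is precisely the device that makes all estimates close for $s > \sfrac{3}{2}$.
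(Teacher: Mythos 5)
Your overall architecture (regularize, uniform $H^s$ bound, compactness, $L^2$ uniqueness, Bona--Smith) is standard, but the central analytic step --- the uniform $H^s$ energy estimate for the gauged equation --- does not close as you describe, and this is a genuine gap rather than a citation issue. Writing $J^s(Wv)=WJ^sv+[J^s,W]v$ with $W=|D|(|v|^2)$, the reality of $W$ kills only the first term. For the commutator, every version of the Kato--Ponce estimate requires control of $\|W\|_{H^s}$ or of $\|\pt_x W\|_{L^\infty}$; but $W$ already carries one derivative of $v$, so $v\in H^s$ gives only $W\in H^{s-1}$, and $\pt_x W\in L^\infty$ would need $s>\sfrac{5}{2}$. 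Concretely, the high--low piece $v_{\mathrm{lo}}\,J^s\bigl(|D|(v_N\ov{v}_{\mathrm{lo}}+\ov{v}_Nv_{\mathrm{lo}})\bigr)$ has $L^2$-size of order $N^{s+1}\|v_N\|_{L^2}\|v\|_{L^\infty}^2$ --- one derivative too many --- and the algebra property and $H^s\hookrightarrow C^1$ cannot repair this. Nor does the loss disappear after pairing with $J^sv$ and taking imaginary parts: splitting $v_N=v_N^++v_N^-$ into the components at frequencies $\pm N$ and using that $|D|=\mp i\pt_x$ there, the residual trilinear form reduces after symmetrization to $N^{2s}\int \mathrm{Im}(v_N^-\ov{v}_{\mathrm{lo}})\,\pt_x\mathrm{Im}(v_N^+\ov{v}_{\mathrm{lo}})\,dx$, which for $v_N^+=\eps\,\mathrm{e}^{iNx}\phi$, $v_N^-=i\eps\,\mathrm{e}^{-iNx}\psi$ is genuinely of order $N^{2s+1}$. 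This failure is tied to an error at the very first step: the gauge \eqref{def:gauge} is a diffeomorphism of $H^s(\R)$, \emph{not} of $H^s_+(\R)$; it destroys chirality, and chirality ($v_N^-=0$) is precisely what would make the bad term vanish. (Relatedly, your closing diagnosis is off: at $s>\sfrac32$ the term $\pt_x(|u|^2)u$ in the ungauged equation contributes $\int \pt_x(|u|^2)|J^su|^2$ at top order, which is harmless; the real enemy in both formulations is the commutator with the derivative-bearing coefficient.)

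The paper's proof avoids this by never gauging. It stays on the chiral equation and splits the nonlinearity as $2T_uT_{\ov u}\pt_xu+2uH_u\pt_xu$. The Toeplitz part is genuinely quasilinear, but its coefficient $T_uT_{\ov u}$ contains no derivative of $u$, so $T_uT_{\ov u}\pt_x$ is skew-adjoint modulo bounded operators and commuting $\pt_x^p$ past it costs at most $p\le s$ derivatives of $u$ (Lemma \ref{lem:energy}); the Hankel part looks like it loses a derivative, but the constraint $\xi,\eta\ge 0$ in the frequency representation of $H_u\pt_x$ forces the derivative factor $\eta$ to be dominated by $\xi+\eta$, i.e.\ transferred onto $u$ at the price of $\|u\|_{\dot H^{3/2}}$ (Lemma \ref{lem:H_smooth}) --- this is exactly where the threshold $s>\sfrac32$ and the Hardy-space hypothesis enter. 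To salvage your route you would have to either work on the ungauged chiral equation with this Toeplitz/Hankel decomposition, or exhibit the extra cancellation in $\mathrm{Im}\langle[J^s,W]v,J^sv\rangle$ by hand; Kato--Ponce plus Sobolev embedding is not sufficient.
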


\begin{proof}
As already mentioned above, we apply a Kato-type iterative scheme to obtain this result. For concreteness, we shall consider the case
$$
s=2
$$
in what follows. The general case $s > \sfrac{3}{2}$ can be handled in an analogous way.

We first write \eqref{eq:NLS} as
\be \label{eq:NLS_alt}
\pt_t u = i \pt_{xx} u + 2 T_u T_{\ov{u}} \pt_x u + 2 u H_u \pt_x u,
\ee
where
\be 
T_a f := \Pi_+(af), \quad H_b f := \Pi_+(b \ov{f})
\ee
denote the {\em Toeplitz} and {\em Hankel operators} acting on $L^2_+(\R)$ with symbols $a$ and $b$, respectively. Our first observation is that the term $H_u \pt_x u$ is of order 0 if $u$ is smooth enough.

\begin{lem} \label{lem:H_smooth}
If $u \in H^{\frac 3 2}_+(\R)$, then $H_u \pt_x : L^2_+(\R) \to L_+^2(\R)$ is bounded with  $\| H_u (\pt_x f) \|_{L^2} \leq \frac{1}{\sqrt{2\pi}}\| u \|_{\dot{H}^{3/2}} \|f \|_{L^2}$. If $u, v \in H^2_+(\R)$, then $H_u \pt_x v \in H^2_+(\R)$ with
$$
\| H_u \pt_x v \|_{H^2} \leq C \|u \|_{H^2} \|v \|_{H^2}
$$
with some constant $C>0$.
\end{lem}

\begin{proof}
Let $f \in H^1_+(\R)$. Then
$$
\widehat{H_u \pt_x f}(\xi) = -\int_0^\infty \widehat{u}(\xi+\eta) \ov{\widehat{f}(\eta)} \frac{d \eta}{2 \pi} \quad \mbox{for} \quad \xi \geq 0.
$$
Consequently,
\begin{align*}
|\widehat{H_u \pt_x f}(\xi)|^2 & \leq \left | \int_0^\infty |\widehat{u}(\xi+\eta)||\xi+\eta| |\widehat{f}(\eta)| \frac{d \eta}{2\pi} \right |^2 \\
& \leq \int_0^\infty |\widehat{u}(\xi+\eta)|^2 (\xi+\eta)^2 \frac{d \eta}{2 \pi}  \cdot \int_0^\infty |\widehat{f}(\eta)|^2 \frac{d \eta}{2\pi}. 
\end{align*}
Thus
\begin{align*}
\|H_u \pt_x f \|_{L^2}^2 & \leq \int_0^\infty \int_0^\infty |\widehat{u}(\xi+\eta)|^2 (\xi+\eta)^2 \frac{d\eta}{2\pi} \frac{d\xi}{2 \pi} \|f \|_{L^2} \\
&  \leq \int_0^\infty |\widehat{u}(\zeta)|^2 \zeta^3 \frac{d\zeta}{4 \pi^2} \|f \|_{L^2}^2 = \frac{1}{2\pi} \| u \|_{\dot{H}^{3/2}}^2 \|f\|_{L^2}^2.
\end{align*}
By density, this bound extends to all $f \in L^2_+(\R)$. This proves the first claim of the lemma.

For the second statement, we note that this follows from the first statement combined with Sobolev embeddings and the identity
$$
\pt_{xx}(H_u \pt_x v) = H_u\pt_x (\pt_{xx} v) + 2 H_{\pt_x u} \pt_x v + H_{\pt_{xx} u} \pt_x v.
$$
This completes the proof of Lemma \ref{lem:H_smooth}.
\end{proof}

In view of \eqref{eq:NLS_alt}, we consider the following iteration scheme
\be \label{eq:NLS_iter}
\pt_t u^{k+1} =i \pt_{xx} u^{k+1} + 2 T_{u^k} T_{\ov{u}^k} \pt_x u^{k+1} + 2 u^k H_{u^k} \pt_x u^k
\ee
with initial datum $u^{k+1}(0,x) =u_0(x) \in H^2_+(\R)$. Notice that $T_u T_{\ov{u}}$ is a self-adjoint operator. Hence a standard energy methods yields the following result.

\begin{lem} \label{lem:energy}
Let $u \in C([-T,T], H^2_+(\R))$ with some $T>0$, $p \in \{0,1,2\}$, and $w_0 \in H^p_+(\R)$, $f \in L^1([-T,T]; H^p_+(\R))$. Then there exists a unique $w \in C([-T,T]; H^p_+(\R))$ such that
$$
\pt_t w = i \pt_{xx} w+ 2 T_u T_{\ov{u}} w + f, \quad w(0,x) = w_0(x).
$$
Furthermore,
$$
\sup_{|t| \leq T} \|w(t) \|_{H^p} \leq C \mathrm{e}^{C \int_{-T}^T \|u(t)\|_{H^2}^2 \, dt} \left( \|w_0 \|_{H^p} + \| f \|_{L^1_t H^p} \right ).
$$
\end{lem}

Coming back to the scheme \eqref{eq:NLS_iter}, we see that Lemmas \ref{lem:H_smooth} and \ref{lem:energy} allow us to construct by induction a sequence $(u^k)$ in $C(\R; H^2_+(\R))$ with $u^0(t,x) = u_0(x)$. We are now going to prove that if $\|u_0 \|_{H^2} \leq R$ and $T(R)>0$ suitably chosen, then the sequence $(u^k)$ is bounded in $H^2_+(\R)$  and uniformly convergent in $L^2_+(\R)$ for $|t| \leq T(R)$.

Let us first prove that $(u^k)$ is bounded in $H^2_+(\R)$ for $|t| \leq T(R)$ with suitably chosen $T(R)>0$. Indeed, by using the second estimate in Lemma \ref{lem:H_smooth} together with the bound in Lemma \ref{lem:energy} for $p=2$, we obtain
$$
\sup_{|t| \leq T} \|u^{k+1} (t) \|_ {H^2} \leq C\mathrm{e}^{C \int_{-T}^T \|u^k(t) \|_{H^2}^2 \, dt} \left ( \|u_0 \|_{H^2} + \int_{-T}^T \|u^k(t)\|_{H^2}^3 \,dt \right ).
$$
Assume $\|u_0 \|_{H^2} \leq R$ and let $R_1 = (1+C) R$. Since $R_1 > CR$, we can choose $T=T(R) > 0$ such that
$$
C \mathrm{e}^{2CT R_1^2}(R+2TR_1^3) \leq R_1.
$$
By an elementary induction argument, we find that $\sup_{|t| \leq T(R)} \|u^k(t) \|_{H^2} \leq R_1$ for all $k$.

Next, we show that we have a contraction property of the sequence $(u^k)$ in $L^2_+(\R)$ for $|t| \leq T(R)$ as follows. Observe that
\begin{align*}
& \pt_t (u^{k+1}-u^k) = i \pt_{xx} (u^{k+1}-u^k) + 2 T_{u^k} T_{\ov{u}^k} \pt_x (u^{k+1}-u^k) + \\
& + 2 (T_{u^k} T_{\ov{u}k}- 2 T_{u^{k-1}} T_{\ov{u}^{k-1}}) \pt_x u^k + 2 u^k H_{u^k} \pt_x u^k - 2 u^{k-1} H_{u^{k-1}} \pt_x u^{k-1}. 
\end{align*}
Using the estimate of Lemma \ref{lem:energy} with $p=0$ and the bound on $u^k$ in $H^2$, we infer
$$
\sup_{|t| \leq T} \|u^{k+1}(t)-u^k(t)\|_{L^2} \leq K T \sup_{|t| \leq T} \|u^k(t) - u^{k-1}(t)\|_{L^2}
$$
with some constant $K>0$. If we choose $T=T(R)>0$ from above small enough to ensure that $KT < 1$, then the series $\sup_{|t| \leq T(R)} \|u^{k+1}(t) - u^k(t) \|_{L^2}$ is geometrically convergent. 

Finally, the sequence $(u^k)$ is uniformly weakly convergent in $C([-T,T]; H^2_+(\R))$ and strongly convergent in $C([-T,T]; L^2_+(\R))$. Hence its limit $u(t)$ solves \eqref{eq:NLS_alt} -- and therefore \eqref{eq:NLS}. (To prove that the limit $u(t)$ actually belongs to $C([-T,T]; H^2_+(\R))$, we can invoke Tao's frequency envelope method \cite{Ta-04} or adapt an argument due to Bona-Smith \cite{BoSm-75}.)

Uniqueness follows along the same lines as the contraction property in $L^2_+(\R)$. The proof of Proposition \ref{prop:lwp_kato} is now complete.
\end{proof}

Following the analysis in \cite{MoPi-10}, we can further lower the regularity for local well-posedness to initial data in $H^s_+(\R)$ with $s > \sfrac{1}{2}$.  In particular, we can reach the energy space $H^1_+(\R)$ for \eqref{eq:NLS}. In fact, the arguments adapt Tao's frequency localized gauge transform introduced to treat low regularity solutions for the Benjamin-Ono equation. For \eqref{eq:NLS}, we obtain the following local well-posedness result.

\begin{thm}[Local Well-Posedness in $H^s_+$ with $s > \sfrac{1}{2}$] \label{thm:lwp}
Let $u_0 \in H^s_+(\R)$ with some $s > \sfrac{1}{2}$. Then there exist a time $T=T(\|u_0\|_{H^s}) > 0$ and some Banach space $Z_{s,T} \subset C([-T,T]; H^s_+(\R))$ and a unique solution $u \in Z_{s,T}$ of \eqref{eq:NLS} with initial datum $u(0)=u_0$. 

Furthermore, the $H^\sigma$-regularity of $u_0$ for $\sigma >s$ is propagated on the whole maximal interval of existence of $u$, and the flow map $u_0\mapsto u(t)$ is continuous on $H^s$.
\end{thm}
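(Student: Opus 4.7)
The plan is to exploit the global gauge transformation $\Phi: u \mapsto v = u\exp\bigl(-\tfrac{i}{2}\int_{-\infty}^x |u|^2\bigr)$ from \eqref{def:gauge} to convert the quasilinear equation \eqref{eq:NLS} into the semilinear equation \eqref{eq:QDNLS}, and then to run a contraction argument in a Bourgain-type resolution space for the 1D Schr\"odinger flow. This is the general philosophy of \cite{MoPi-10}, where a frequency-localized gauge is used to cure the derivative loss in Benjamin-Ono; here the gauge is global and explicit, which is an important simplification.

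First I would verify that $\Phi$ is a bi-Lipschitz homeomorphism of bounded subsets of $H^s(\R)$ for every $s > \tfrac{1}{2}$, using the Banach algebra property of $H^s$, the embedding $H^s \hookrightarrow L^\infty$, and the fact that $f \mapsto e^{if}-1$ maps real-valued $H^s$ functions into $H^s$ with bounds on balls; the inverse of $\Phi$ is obtained by flipping the sign in the exponent. Next, on the semilinear equation \eqref{eq:QDNLS} I would set up the Duhamel fixed point in a Bourgain space $X^{s,b}_T$ with $b$ slightly above $\tfrac{1}{2}$. The core new ingredient is the trilinear bound
$$
\bigl\| |D|(v_1 \bar v_2)\, v_3 \bigr\|_{X^{s,b-1}_T} \,\lesssim\, T^\delta \prod_{j=1}^3 \|v_j\|_{X^{s,b}_T} \qquad (\delta > 0),
$$
obtained by Littlewood-Paley decomposition combined with the 1D $L^4_t L^\infty_x$ Strichartz estimate and $H^s \hookrightarrow L^\infty$; the quintic $|v|^4 v$ contribution is handled by standard $H^s$-algebra estimates. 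This produces a unique $v \in X^{s,b}_T \hookrightarrow C([-T,T]; H^s(\R))$ with $T = T(\|v_0\|_{H^s})$, and setting $u = \Phi^{-1}(v)$ together with $Z_{s,T} := \{u : \Phi(u) \in X^{s,b}_T\}$ returns a solution of \eqref{eq:NLS} in $H^s$.

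To see that $u(t) \in H^s_+(\R)$, I would first verify propagation of chirality on the smooth solutions produced by Proposition \ref{prop:lwp_kato}: if $u \in H^{s'}_+$ with $s' > \tfrac{3}{2}$, then $\pt_{xx} u \in H^{s'-2}_+$ and $D_+(|u|^2) u \in H^{s'}_+$ (both preserve the Hardy-space Fourier support, via $\mathrm{supp}(\widehat{fg}) \subset \mathrm{supp}(\hat f) + \mathrm{supp}(\hat g)$), so the right-hand side stays in $H^{s'}_+$ and uniqueness forces $u(t) \in H^{s'}_+$. Extension to general $u_0 \in H^s_+$ then follows by approximation in $H^s$ and continuity of the flow; propagation of $H^\sigma$-regularity for $\sigma > s$ is a persistence-of-regularity statement obtained by reapplying the Kato scheme of Proposition \ref{prop:lwp_kato} to smoother approximations. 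The main obstacle is the trilinear $X^{s,b}$ estimate with a full derivative on a product, which is borderline near $s = \tfrac{1}{2}$; the crucial structural fact that makes it possible is that the worst resonant interaction has already been removed by $\Phi$, so only sub-critical contributions remain and a small positive power of $T$ can be extracted to close the contraction.
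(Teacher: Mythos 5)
Your architecture (global gauge to \eqref{eq:QDNLS}, then a contraction in $X^{s,b}$) is not the paper's, but the more important point is that it contains a gap I do not believe can be repaired: the trilinear estimate at the heart of your fixed point is false, because the global gauge $\Phi$ does \emph{not} remove the worst interaction. Write $(D+|D|)(|u|^2)=2D_+(|u|^2)$; on an output frequency near $+N$ the projector $\Pi_+$ acts as the identity, so $D(|u|^2)$ and $|D|(|u|^2)$ coincide there, and the gauge --- which cancels precisely the $D(|u|^2)u$ half --- merely halves the coefficient of the dangerous piece rather than eliminating it. Concretely, expanding $|D|(|v|^2)v=\Hil\bigl(\bar v\,\pt_x v+v\,\pt_x\bar v\bigr)v$ and putting the two undifferentiated factors at frequency $O(1)$ against $\pt_x v$ at frequency $N$, the Hilbert transform acts as $-i$ on the (positive) output frequency and the term reduces to the transport term $|v_{\mathrm{low}}|^2\,\pt_x v_{\mathrm{high}}$, exactly as in the ungauged equation but with coefficient $1$ instead of $2$. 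For this low$\times$low$\times$high interaction the output frequency is $\approx N$, so the $\langle\xi\rangle^{s}$ weights on the output and on the high input cancel, the derivative costs a full factor $N$, and the resonance function $\xi^2-\xi_1^2+\xi_2^2-\xi_3^2=2N(\xi_3-\xi_2)+O(1)$ is only $O(N)$ (and $O(1)$ on the near-resonant set $\xi_2\approx\xi_3$). The modulation weights therefore recover at most $N^{1/2}$, and $\bigl\||D|(v_1\bar v_2)v_3\bigr\|_{X^{s,b-1}}$ loses an unrecoverable $N^{1/2}$ for \emph{every} $s$ and every $b$ near $\tfrac{1}{2}$. This is the same obstruction as for Benjamin--Ono, where the failure of any Picard iteration is known (the flow map is not $C^2$, indeed not locally uniformly continuous); it is also why the statement you are proving claims only \emph{continuity} of the flow map, which a contraction argument would automatically upgrade to analyticity.

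For comparison, the paper proceeds by compactness rather than iteration: it mollifies $u_0$, solves with the smooth theory of Proposition \ref{prop:lwp_kato}, shows the lifespan $T(\|u_0\|_{H^s})$ is uniform in the mollification parameter, and proves the approximating family is Cauchy in $Z_{s,T}$ by adapting the estimates of \cite{MoPi-10}. The essential device there is a \emph{frequency-localized} gauge, applied to the high-frequency part of $u$ with a phase built from the low-frequency part of $|u|^2$; that is what actually removes the residual transport term $|u_{\mathrm{low}}|^2\pt_x u_{\mathrm{high}}$, and even then the argument closes through energy and Bona--Smith-type estimates, not a contraction. Your remarks on propagation of chirality (via $\mathrm{supp}(\widehat{fg})\subset\mathrm{supp}(\hat f)+\mathrm{supp}(\hat g)$) and on persistence of $H^\sigma$-regularity are fine, but they rest on an existence scheme that does not close.
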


\begin{remark*}
Recall that \eqref{eq:NLS} is $L^2$-critical with respect to scaling. It remains a fundamental open problem to understand the case $u \in H^s_+(\R)$ when $0 \leq s \leq \sfrac{1}{2}$.
\end{remark*}

\begin{proof}
We can adapt the estimates proven in \cite{MoPi-10} to our case. Suppose that $u_0 \in H^s_+(\R)$ with some $s > \sfrac{1}{2}$. For $\eps > 0$, let $\eta_\eps(x) = \sqrt{\frac{1}{\eps \pi}} \mathrm{e}^{-|x|^2/\eps}$  be a Gaussian mollifier. Then $u_{0,\eps}(x) = (\eta_\eps \ast u_0)(x)$ satisfies $u_{0,\eps} \in H^\infty_+(\R) \subset H^2_+(\R)$. By Proposition \ref{prop:lwp_kato}, there exists a unique solution $u_\eps \in C([-T_\eps, T_\eps]; H^2_+(\R))$ with $u_\eps(0) = u_{0, \eps}$. We can now apply the arguments in the proof of Theorem 1.1 in \cite{MoPi-10}. First, we can show that there exists $T=T(\|u_0 \|_{H^s}) > 0$ satisfying $T_\eps \geq T$ for all $\eps > 0$. Then following Proposition 3.2 in \cite{MoPi-10} we see that $(u_\eps)$ is Cauchy in $Z_{s,T}$ as $\eps \to 0$; we refer to \cite{MoPi-10} for the definition of the Banach space $Z_{s,T}$. Finally, the uniqueness of the limit of $(u_\eps)$ can be proven by the estimate (3.40) in \cite{MoPi-10}.
\end{proof}

\subsection{Lax Pair and Conservation Laws}

In this subsection, we will show that \eqref{eq:NLS} admits a {\em Lax pair} with certain densely defined operators $L_u$ and $B_u$ acting on the Hardy space $L^2_+(\R)$. Here we will exploit this fact to derive an infinite hierachy of conservation laws. For an analysis of the spectral properties of $L_u$, we refer to Section \ref{sec:dyn_sol} below.

For $u \in H^s_+(\R)$ with  some $s \geq 0$, we formally define the operators $L_u$ and $B_u$ acting on $L^2_+(\R)$ by setting
\be \label{eq:L_B}
\boxed{L_u = D - T_u T_{\bar{u}} \quad \mbox{and} \quad B_u = T_u T_{\pt_x \bar{u}} - T_{\pt_x u} T_{\bar{u}} + \ii (T_u T_{\bar{u}})^2}
\ee
Here $T_b(f) = \Pi_+(b f)$ denotes the {\em Toeplitz operator} on $L^2_+(\R)$ with symbol $b \in L^2(\R) + L^\infty(\R)$. For $u \in H^1_+(\R) \subset  L^\infty(\R)$, we readily check that $T_u$ and $T_{\bar{u}}$ are bounded operators on $L^2_+(\R)$. Thus, for $u \in H^1_+(\R)$, it is straightforward to verify that $L_u$ is semibounded and a self-adjoint operator, i.\,e.,
$$
L_u^* =L_u
$$
with operator domain $\mathrm{dom}(L_u) = H^1_+(\R)$. For $u \in H^2_+(\R)$, we readily check that $B_u = -B_u^*$ is a skew-adjoint and bounded operator on $L^2_+(\R)$. 

\begin{remark}
    In Appendix \ref{app:lax} below, we detail how $L_u$ can be defined via quadratic forms if we only assume that $u \in L^2_+(\R)$, which is a natural class in view of the $L^2$-criticality of (CM-DNLS).
\end{remark}

Next, we see that $L_u$ and $B_u$ form indeed a Lax pair for the Calogero--Moser DNLS.

\begin{lem}[Lax Equation] \label{lem:Lax} If $u \in C([0,T]; H^s_+(\R))$ solves \eqref{eq:NLS} with $s \geq 0$ sufficiently large (e.\,g.~with $s=2$), then it holds
$$
\frac{d}{dt} L_u = [B_u, L_u] .
$$
\end{lem}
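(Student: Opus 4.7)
\emph{Proof plan.} The identity is verified by direct computation on $L^2_+(\R)$. Rewriting \eqref{eq:NLS} in the form
$$\pt_t u = i\pt_{xx}u + 2i\,D_+(|u|^2)u,$$
one obtains
$$\tfrac{d}{dt}L_u = -T_{\pt_t u}T_{\bar{u}} - T_u T_{\pt_t \bar{u}} = -iT_{\pt_{xx}u}T_{\bar{u}} + iT_u T_{\pt_{xx}\bar{u}} - 2iT_{D_+(|u|^2)u}T_{\bar{u}} + 2iT_u T_{\overline{D_+(|u|^2)u}},$$
while the right-hand side splits as $[B_u,L_u] = [B_u,D] - [B_u,T_u T_{\bar{u}}]$. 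The plan is to match the two sides term by term using three structural facts: (i) since $D$ commutes with $\Pi_+$, one has $[D,T_b] = -iT_{\pt_x b}$ for sufficiently smooth symbols; (ii) for $u \in H^s_+(\R)$, the operator $T_u$ acts by pointwise multiplication on $L^2_+(\R)$, so that $T_u T_v = T_{uv}$ whenever $u$ is Hardy; and (iii) the Leibniz rule for products of symbols.

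First I would compute $[B_u,D]$. Commuting the quadratic piece $T_u T_{\pt_x \bar{u}}-T_{\pt_x u}T_{\bar{u}}$ of $B_u$ with $D$ via (i) produces precisely the second-derivative operators $iT_u T_{\pt_{xx}\bar{u}} - iT_{\pt_{xx}u}T_{\bar{u}}$ required to cancel the dispersive part of $\tfrac{d}{dt}L_u$, together with cross terms $\pm iT_{\pt_x u}T_{\pt_x \bar{u}}$ that cancel between the two summands. The quartic summand $i(T_u T_{\bar{u}})^2$ contributes an additional cubic expression through $[T_u T_{\bar{u}},D] = i(T_{\pt_x u}T_{\bar{u}} + T_u T_{\pt_x \bar{u}})$. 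This step already accounts for the entire dispersive part of the identity.

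Next I would compute $[B_u,T_u T_{\bar{u}}]$. The bracket $[i(T_u T_{\bar{u}})^2, T_u T_{\bar{u}}]$ vanishes identically, so only the cubic contribution $\bigl[T_u T_{\pt_x \bar{u}}-T_{\pt_x u}T_{\bar{u}},\,T_u T_{\bar{u}}\bigr]$ survives. Combining this with the cubic pieces produced in the previous step, the proof reduces to matching these commutators against the nonlinear contribution $-2iT_{D_+(|u|^2)u}T_{\bar{u}} + 2iT_u T_{\overline{D_+(|u|^2)u}}$ of $\tfrac{d}{dt}L_u$. The key algebraic inputs are the identity $D_+(|u|^2) = D\,T_{\bar{u}}u \in L^2_+(\R)$ and the chiral decomposition $|u|^2 = T_{\bar{u}}u + \overline{T_{\bar{u}}u}$, which together rewrite the nonlinear symbol as an operator expression built from $T_u$, $T_{\bar{u}}$, and $D$.

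The principal obstacle is this final cubic matching. Products such as $T_u T_{\bar{u}}T_u T_{\pt_x \bar{u}}$ and their permutations do not satisfy a clean Leibniz rule, because each intermediate projection $\Pi_+$ generates non-commutative corrections (formally controlled by Hankel operators). Repeated application of (i)--(iii), together with $D_+(|u|^2) = DT_{\bar{u}}u$, is expected to collapse all these cubic remainders onto the required form. To legitimize every manipulation, I would first carry out the computation under the stronger regularity assumption $u(t) \in H^2_+(\R)$, where all operators in sight are bounded on $L^2_+(\R)$; the statement for general $s$ as in the hypothesis then follows by density together with the continuity of the flow established in Proposition \ref{prop:lwp_kato}.
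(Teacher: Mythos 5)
Your overall architecture coincides with the paper's proof: both compute $\frac{d}{dt}L_u = -T_{\pt_t u}T_{\bar u} - T_uT_{\pt_t\bar u}$ from the equation, expand $[B_u,L_u]$ via the commutator rule $[T_b,D]=T_{i\pt_x b}$, observe the cancellation of the cross terms $T_{\pt_x u}T_{\pt_x\bar u}$ and the vanishing of $[i(T_uT_{\bar u})^2,T_uT_{\bar u}]$, and reduce everything to matching the cubic-in-$u$ contributions. However, the step you yourself single out as ``the principal obstacle'' is exactly the step you do not carry out, and the tool you propose for it, item (ii), is false as stated. The identity $T_uT_v=T_{uv}$ does \emph{not} hold merely because the left symbol $u$ is Hardy; it holds when the right symbol $v$ lies in $L^2_+(\R)$ (so that $vf$ is again Hardy and the intermediate $\Pi_+$ is harmless), or dually when $\bar u\in L^2_+(\R)$. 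Taken literally, your (ii) would give $T_uT_{\bar u}=T_{|u|^2}$, which is wrong (the discrepancy is a Hankel square) and would collapse $L_u$ to $D-T_{|u|^2}$.

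The paper closes the computation with precisely two instances of the correct one-sided composition rule. On the $[B_u,L_u]$ side, the surviving cubic terms combine as $-2T_u\bigl(T_{\bar u}T_{\pt_x u}+T_{\pt_x\bar u}T_u\bigr)T_{\bar u}=-2T_uT_{\pt_x|u|^2}T_{\bar u}$, using that the right symbols $\pt_x u$ and $u$ are Hardy. On the $\frac{d}{dt}L_u$ side, writing $\pt_t u=i\pt_{xx}u+2\Pi_+(\pt_x|u|^2)u$, one needs
$$
T_{\Pi_+(\pt_x|u|^2)u}T_{\bar u}+T_uT_{\overline{\Pi_+(\pt_x|u|^2)}\bar u}=T_uT_{\pt_x|u|^2}T_{\bar u},
$$
which follows from $T_{\Pi_+(\pt_x|u|^2)u}=T_uT_{\Pi_+(\pt_x|u|^2)}$, its conjugate counterpart, and the splitting $\pt_x|u|^2=\Pi_+(\pt_x|u|^2)+\overline{\Pi_+(\pt_x|u|^2)}$ of the real symbol into its Hardy and anti-Hardy parts. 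Once you replace (ii) by this rule, your plan goes through and is essentially the paper's proof; as written, the decisive identity is neither proved nor correctly equipped, so the argument has a genuine gap.
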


\begin{proof}
We divide the proof into the following steps.

\medskip
\textbf{Step 1.} We first calculate the commutators
$$
I := [T_u T_{\pt_x \bar{u}}, D], \quad II := [T_{\pt_x u} T_{\bar{u}}, D], \quad III := \ii [(T_u T_{\bar{u}})^2, D] .
$$
We find
\begin{align*}
I & = T_u [T_{\pt_x \bar{u}}, D] + [T_u, D] T_{\pt_x \bar{u}}   =  T_u T_{\ii \pt_{xx} \bar{u}} + T_{\ii \pt_x u} T_{\pt_x \bar{u}}, \\
II & =  T_{\pt_x u} [T_{\bar{u}}, D] + [T_{\pt_x u}, D] T_{\bar{u}} = T_{\pt_x u} T_{\ii \pt_x \bar{u}} + T_{\ii \pt_{xx} u} T_{\bar{u}}.
\end{align*}
In addition, we see
\begin{align*}
III & = \ii T_u T_{\bar{u}} [T_u T_{\bar{u}}, D] +  \ii [T_u T_{\bar{u}}, D] T_u T_{\bar{u}} \\
& = \ii T_u T_{\bar{u}} \left ( T_u T_{\ii \pt_x \bar{u}} + T_{\ii \pt_x u} T_{\bar{u}} \right ) + \ii \left (  T_u T_{\ii \pt_x \bar{u}} + T_{\ii \pt_x u} T_{\bar{u}} \right ) T_u T_{\bar{u}} .
\end{align*}
As a next step, we consider the terms
$$
IV := [T_u T_{\pt_x \bar{u}}, T_u T_{\bar{u}}], \quad V := [T_{\pt_x u} T_{\bar{u}}, T_u T_{\bar{u}}], \quad VI = \ii [(T_u T_{\bar{u}})^2, T_u T_{\bar{u}}].
$$
We find
$$
IV = T_u T_{\pt_x \bar{u}} T_u T_{\bar{u}} - T_u T_{\bar{u}} T_u T_{\pt_x \bar{u}}, \quad V  = T_{\pt_x u} T_{\bar{u}} T_u T_{\bar{u}} - T_u T_{\bar{u}} T_{\pt_x u} T_{\bar{u}}, \\
$$
and, clearly, we have $VI = 0$. If we combine all commutator terms, we see
\begin{align*}
[B_u, L_u] & =  I - II + III - IV + V \\
& = T_u T_{\ii \pt_xx \bar{u}} - T_{\ii \pt_{xx} u} T_{\bar{u}} -2 T_u T_{\bar{u}} T_{\pt_x u} T_{\bar{u}} - 2 T_u T_{\pt_x \bar{u}} T_u T_{\bar{u}} \\
& = T_u T_{\ii \pt_{xx} \bar{u}} - T_{\ii \pt_{xx} u} T_{\bar{u}} - 2 T_u T_{\pt_x |u|^2} T_{\bar{u}},
\end{align*}
where in the last step we used that $u \in H^1_+(\R)$, which implies  that 
$$
T_{\bar{u}} T_{\pt_x u}  + T_{\pt_x \bar{u}} T_u =  T_{ \bar{u}\pt_x u+\pt_{x} \bar{u} u} = T_{\pt_x |u|^2}
$$ 
holds on $L^2_+(\R)$.

\medskip
\textbf{Step 2.} We now calculate 
\begin{align*}
\frac{d}{dt} L_u &=  -T_{\dot{u}} T_{\bar{u}} - T_u T_{\dot{\bar{u}}} \\
& = -T_{\ii \pt_{xx} u} T_{\bar{u}}  - 2 T_{\Pi_+(\pt_x |u|^2)u} T_{\bar{u}}   - T_u T_{-\ii \pt_{xx} \bar{u}} - 2 T_u T_{ \overline{\Pi_+(\pt_x |u|^2)}\bar{u}}
\end{align*}
In view of the expression for $[B_u, L_u]$ derived in Step 1, it remains to show the identity
\be \label{eq:T_magic}
T_{\Pi_+(\pt_x |u|^2) u} T_{\bar{u}} + T_u T_{\overline{\Pi_+(\pt_x |u|^2)}\bar{u}} = T_u T_{\pt_x |u|^2} T_{\bar{u}}
\ee
Indeed, using that $u \in H^1_+(\R)$, we find 
$$
T_{\Pi_+(\pt_x |u|^2) u} = T_u T_{\Pi_+(\pt_x |u|^2)} \quad \mbox{and} \quad T_{\overline{\Pi_+(\pt_x |u|^2)}\bar{u}} = T_{\overline{\Pi_+(\pt_x |u|^2)}} T_{\bar{u}}.
$$ 
Since $\pt_x |u|^2 = \Pi_+(\pt_x |u|^2) + \overline{\Pi_+(\pt_x |u|^2)}$, we deduce that \eqref{eq:T_magic} holds true. This completes the proof of Lemma \ref{lem:Lax}.
\end{proof}

\begin{remark}
Since we have that $[L_u,L_u^2] =0$, the skew-adjoint operator
$$
\widetilde{B}_u = B_u - \ii L_u^2 
$$
also satisfies the Lax equation $\frac{d}{dt} L_u = [\widetilde{B}_u, L_u]$. A direct calculation shows that
\be \label{def:B_tilde}
\widetilde{B}_u = -\ii D^2 + 2 \ii T_u D T_{\bar{u}}
\ee
Note that $\widetilde{B}_{u}$ is an unbounded skew-adjoint operator on $L^2_+(\R)$ with operator domain $\mathrm{dom}(\widetilde{B}_u) = H^2_+(\R)$. Actually, we first found the operator $\widetilde{B}_u$ in the analysis of the Lax structure of \eqref{eq:NLS}. We also note that the relation between $B_u$ and $\widetilde{B}_u$ is reminiscent to the Lax structure for the Benjamin-Ono equation used in \cite{GeKa-21}.
\end{remark}

As a consequence of the Lax equation, we obtain an infinite hierarchy of conservation of laws for (CM-DNLS) as follows

\begin{lem}[Hierarchy of Conservation Laws]  \label{lem:hierarchy}
Let $u \in C([0,T], H^s_+(\R))$ be a solution of (CM-DNLS) with sufficiently large $s \geq 0$. Then, for every $\lambda \not \in \sigma(L_{u(0)})$, we have the conserved quantity
$$
\mathcal{H}_\lambda(u) := \langle (L_u - \lambda I)^{-1} u, u \rangle.
$$
As a consequence, if $u \in C([0,T]; H^{n/2}_+(\R))$ with some $n \in \N$, the quantities
$$
I_k(u) := \langle  L_u^k u, u \rangle \quad \mbox{with $k=0, \ldots, n$}
$$
are conserved, where $\langle \cdot, \cdot \rangle$ denotes the dual pairing of $H^{-n/2}_+$ and $H^{n/2}_+$.
\end{lem}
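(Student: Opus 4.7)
The plan is to base the proof on the alternative skew-adjoint Lax partner $\widetilde{B}_u = B_u - \ii L_u^2 = -\ii D^2 + 2\ii T_u D T_{\bar u}$ from the remark after Lemma \ref{lem:Lax}. This operator also satisfies $\frac{d}{dt} L_u = [\widetilde{B}_u, L_u]$ because $L_u^2$ commutes with $L_u$, and crucially \eqref{eq:NLS} is equivalent to the linear-looking evolution
\[
\pt_t u = \widetilde{B}_u u.
\]
Verifying this identity amounts to recognizing $-\ii D^2 u = \ii \pt_{xx} u$ and to checking $2\ii T_u D T_{\bar u} u = 2\ii D_+(|u|^2) u$; the latter follows from the Hardy-preserving property of multiplication, since $u$ and $D_+(|u|^2)$ both have Fourier support in $[0,\infty)$, so $\Pi_+$ acts trivially on their product.

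Next, I would derive a differentiation formula for the resolvent. For fixed $\lambda \notin \sigma(L_{u(0)})$, the Lax structure with a skew-adjoint generator implies unitary conjugacy of $L_{u(t)}$ with $L_{u(0)}$, hence $\lambda \notin \sigma(L_{u(t)})$ throughout the interval of existence. From $\dot L_u = [\widetilde{B}_u, L_u]$ together with the identity $L_u R_u(\lambda) = R_u(\lambda) L_u = I + \lambda R_u(\lambda)$, a direct computation yields $\frac{d}{dt} R_u(\lambda) = [\widetilde{B}_u, R_u(\lambda)]$. Combining this with $\dot u = \widetilde{B}_u u$ and the skew-adjointness $\widetilde{B}_u^* = -\widetilde{B}_u$, the three contributions to $\frac{d}{dt}\langle R_u(\lambda) u, u\rangle$ cancel pairwise, giving conservation of $\mathcal{H}_\lambda(u)$.

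For the conservation of the $I_k$, I would expand the resolvent as $\mathcal{H}_\lambda(u) = -\sum_{k \geq 0} \lambda^{-k-1} I_k(u)$, valid for $|\lambda|$ large enough, and then use the time-independence of $\mathcal{H}_\lambda$ jointly with its analyticity in $\lambda$ to match Laurent coefficients. To accommodate the weaker regularity $u \in C([0,T]; H^{n/2}_+(\R))$ in the statement, I would work first with smooth approximants $u_0^{(\eps)} \in H^\infty_+(\R)$, where everything is classical, and pass to the limit using the continuity of the flow from Theorem \ref{thm:lwp} together with the fact that $I_k(u) = \langle L_u^k u, u\rangle$, interpreted as the $H^{-n/2}$-$H^{n/2}$ duality pairing, is a continuous functional on $H^{n/2}_+(\R)$ for $k \leq n$.

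The hardest part will be the rigorous treatment of the unbounded, time-dependent operator $\widetilde{B}_u$: the relation $\dot u = \widetilde{B}_u u$ is a second-order PDE with time-dependent coefficients, and one must check that the propagator generated by $\widetilde{B}_{u(t)}$ exists as a unitary evolution on $L^2_+(\R)$, that the Sobolev scale is preserved in a usable way, and that the resolvent differentiation above is legitimate on appropriate subdomains. The approximation argument above, combined with careful bookkeeping of the Sobolev mapping properties $\widetilde{B}_u : H^{s}_+(\R) \to H^{s-2}_+(\R)$, should resolve these technicalities.
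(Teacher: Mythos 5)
Your proposal follows essentially the same route as the paper's proof: rewriting the flow as $\pt_t u = \widetilde{B}_u u$ with the skew-adjoint operator $\widetilde{B}_u = B_u - \ii L_u^2$, combining this with the Lax equation to get $\frac{d}{dt}\mathcal{H}_\lambda(u)=0$, then expanding the resolvent in powers of $\lambda^{-1}$ for $\lambda$ sufficiently negative (using that $L_u$ is bounded below) and passing to $H^{n/2}_+$ data by approximation. The argument is correct, and your sign in the Neumann expansion $(L_u-\lambda I)^{-1}=-\sum_{k\ge 0}\lambda^{-k-1}L_u^k$ is the accurate one.
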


\begin{proof}
Let $\lambda \not \in \sigma(L_{u_0})$, which by the Lax equation implies that $\lambda \not \in \sigma(L_u)$ for all $u=u(t)$ with $t \in [0,T]$. A quick calculation reveals that (CM-DNLS) can be written as
$$
\pt_t u = \widetilde{B}_u u
$$
with the operator $\widetilde{B}_u=B_u -i L_u^2$; see also \eqref{def:B_tilde} above. Using that $\frac{d}{dt} L_u =  [\widetilde{B}_u, L_u]$, it is elementary to verify that 
$$
\frac{d}{dt} \mathcal{H}_\lambda(u)=0.
$$
Finally, we note the expansion
$$
\mathcal{H}_\lambda(u) = \sum_{k=0}^\infty \lambda^{-(k+1)} \langle L^k_u u, u \rangle
$$
for all real $\lambda < 0$ sufficiently negative (using that $L_u$ is bounded below) and with $u \in H^\infty_+(\R) = \cap_{s \geq 0} H^s_+(\R)$. Thus we deduce that $I_k(u)=\langle L^k_u u, u \rangle$ are constant in time for solutions in $H^\infty_+(\R)$. The conservation laws $I_0(u), \ldots, I_n(u)$ for solutions $u \in C([0,T]; H^{n/2}_+(\R))$ follow from an approximation argument, which we omit.
\end{proof}

\subsection{Global Existence for sub-critical $L^2$-Mass}
As an application of Lemma \ref{lem:hierarchy}, we deduce the following global-in-time existence result.

\begin{cor}[Global Existence and Bounds for sub-critical $L^2$-Mass] \label{cor:gwp}
Let $u_0 \in H^{s}_+(\R)$ with some integer $s \geq 1$ and suppose that 
$$
M(u_0) < M(\RR) = 2 \pi.
$$ 
Then the corresponding solution $u(t) \in H^{s}_+(\R)$ of (CM-DNLS) exists for all times $t \in \R$ with the a-priori bound
$$
\sup_{t \in \R} \|u(t) \|_{H^{s}} < +\infty.
$$
\end{cor}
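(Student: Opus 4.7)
The proof will follow the standard pattern of combining local well-posedness with a priori bounds derived from the conservation laws of Lemma \ref{lem:hierarchy}. By Theorem \ref{thm:lwp}, the maximal solution $u(t) \in H^s_+(\R)$ exists on an open interval $(-T_{\min}, T_{\max})$ satisfying the blow-up alternative: if, say, $T_{\max} < +\infty$, then $\limsup_{t \uparrow T_{\max}} \|u(t)\|_{H^s} = +\infty$. Hence it suffices to produce a time-independent bound $\sup_t \|u(t)\|_{H^s} < +\infty$ in terms of $u_0$. Since $s \geq 1$ is an integer, Lemma \ref{lem:hierarchy} supplies conservation of $I_0(u), I_1(u), \ldots, I_{2s}(u)$ along the flow. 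In particular, $I_0(u(t)) = M(u_0) < 2\pi$ is preserved.

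The heart of the argument is a coercivity statement: there exists a continuous function $\Phi_s$, depending on $s$ and on the gap $1 - M(u_0)/(2\pi) > 0$, such that
\be \label{eq:coerc-goal}
\|u\|_{H^s}^2 \leq \Phi_s\bigl(I_0(u), I_1(u), \ldots, I_{2s}(u)\bigr) \qquad \text{whenever } M(u) < 2\pi.
\ee
Granting \eqref{eq:coerc-goal}, the conservation of each $I_k(u(t))$ converts this into the required uniform a priori bound and global existence follows. The proof of \eqref{eq:coerc-goal} I would carry out by induction on $s$. For the base step $s=0$ there is nothing to do. The crucial input at the first level is the sharp Gagliardo--Nirenberg inequality on the Hardy space,
\be \label{eq:sharp-GN}
\|f\|_{L^4}^4 \leq \frac{1}{2\pi}\, \|f\|_{L^2}^2\, \|D^{1/2} f\|_{L^2}^2 \qquad \text{for all } f \in H^{1/2}_+(\R),
\ee
whose sharp constant $1/(2\pi)$ is attained on the ground state $\RR$ of Section \ref{sec:gs}. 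Since $I_1(u) = \|D^{1/2}u\|_{L^2}^2 - \|\Pi_+(|u|^2)\|_{L^2}^2$ and $\|\Pi_+(|u|^2)\|_{L^2}^2 \leq \|u\|_{L^4}^4$, estimate \eqref{eq:sharp-GN} yields
\be
\|D^{1/2} u\|_{L^2}^2 \leq \frac{I_1(u)}{1 - M(u)/(2\pi)},
\ee
which controls $\|u\|_{H^{1/2}}$ purely in terms of $I_0(u), I_1(u)$ and the subcritical mass gap.

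For the inductive step one expands $L_u^s u = D^s u + R_s(u)$, where $R_s(u)$ is a finite sum of multilinear expressions in $u$, $\bar u$ and their derivatives, each involving at most $s-1$ derivatives on any single factor. Writing $\|D^s u\|_{L^2}^2 \leq 2\,I_{2s}(u) + 2\,\|R_s(u)\|_{L^2}^2$, one estimates every term of $R_s(u)$ by Hölder, Sobolev embedding, and \eqref{eq:sharp-GN} applied to $|u|^2$-type factors; the subcritical mass provides the strict inequality that makes each critical trilinear bound strict, and Young's inequality absorbs any subleading occurrence of $\|D^s u\|_{L^2}$ back into the left-hand side. The inductive hypothesis controls all derivatives below order $s$, and one arrives at \eqref{eq:coerc-goal}. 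The main obstacle is purely combinatorial and algebraic: one must verify that in the expansion of $L_u^s u$ every correction term from $R_s(u)$ is of \emph{strictly} lower order in derivative-count on any single factor, so that the $L^2$-critical scaling matches and the sharp constant $1/(2\pi)$ is the only obstruction to absorption. This is exactly where the strict subcritical condition $M(u_0) < 2\pi$ enters the proof.
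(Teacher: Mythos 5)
Your overall strategy coincides with the paper's: invoke conservation of the hierarchy $I_0,\dots,I_{2s}$ from Lemma \ref{lem:hierarchy}, obtain coercivity at the $\dot H^{1/2}$ level from the identity $I_1(u)=\|D^{1/2}u\|_{L^2}^2-\|T_{\bar u}u\|_{L^2}^2$ together with the sharp mass threshold $2\pi$, and then bootstrap: at each higher level $I_k=\|u\|_{\dot H^{k/2}}^2+(\text{lower order})$, the corrections are subcritical relative to $\dot H^{k/2}$ once $H^{(k-1)/2}$ is controlled, so they can be absorbed without any sharp constant. The paper does exactly this, treating $s=1$ explicitly via $I_0,I_1$ and then the energy $I_2$, and sketching the iteration for $s\geq 2$; your expansion $L_u^s u = D^s u + R_s(u)$ is the same idea written once for all $k$.

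There is, however, a concrete error in your key inequality: the claimed sharp Gagliardo--Nirenberg bound
$\|f\|_{L^4}^4 \leq \tfrac{1}{2\pi}\|f\|_{L^2}^2\|D^{1/2}f\|_{L^2}^2$ on $H^{1/2}_+(\R)$, allegedly saturated by $\RR$, is false. Indeed $\|\RR\|_{L^4}^4 = \int_\R \frac{4}{(1+x^2)^2}\,dx = 2\pi$, while $\|\RR\|_{L^2}^2 = 2\pi$ and $\langle D\RR,\RR\rangle = \pi$, so your right-hand side equals $\pi < 2\pi$; the correct sharp constant on the chiral class is $\sfrac{1}{\pi}$, attained at $\RR$. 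Your argument survives only because of a second, compensating inaccuracy: for real-valued $\rho=|u|^2$ one has $\|\Pi_+\rho\|_{L^2}^2=\tfrac12\|\rho\|_{L^2}^2$ (since $\Pi_-\rho=\overline{\Pi_+\rho}$), so your crude bound $\|\Pi_+(|u|^2)\|_{L^2}^2\leq\|u\|_{L^4}^4$ loses exactly the factor $2$ that your Gagliardo--Nirenberg constant is too small by. The net estimate $\|T_{\bar u}u\|_{L^2}^2\leq \tfrac{1}{2\pi}\|u\|_{L^2}^2\langle Du,u\rangle$ is therefore correct, and is precisely the paper's Lemma \ref{lem:GN_basic}, which is proved directly by Cauchy--Schwarz on the Fourier side with no detour through $L^4$. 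You should replace the false intermediate inequality by that lemma (or by the two-step argument with the correct constants); with that repair, the rest of your proof goes through and matches the paper's.
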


\begin{remarks*}
1) The condition $s \geq 1$ arises from the current state of the local well-posedness theory for (CM-DNLS). It is conceivable that, with some great effort though, that this result extends to initial data in $H^0_+(\R)=L^2_+(\R)$ satisfying $\|u_0 \|_{L^2}^2  < 2 \pi$.

2) From Theorem \ref{thm:growth_intro} we deduce that the infinite hierarchy of conservation laws 
$$
I_k(u(t)) = I_k(u_0) \quad \mbox{with $k = 1, 2,\ldots$}
$$
fail in general to produce a-priori bounds on $H^s$-norms for $s> 0$ for solutions with initial data with $L^2$-mass $M(u_0) \geq M(\RR) = 2\pi$. 
\end{remarks*}

\begin{proof}
We first consider the case $s=1$. By the local well-posedness theory in $H^1_+(\R)$, we need to find an a-priori bound on $\sup_{t \in I} \| u(t) \|_{H^1}$, where $I \subset \R$ denotes the maximal time interval of existence. Indeed, we have the conservation laws
$$
I_0(u) = \langle u, u \rangle = \|u \|_{L^2}^2,
$$
$$
I_1(u) = \langle L_u u, u \rangle = \langle Du, u \rangle - \| T_{\bar{u}} u \|_{L^2}^2 .
$$
Now, we use the sharp inequality (see Lemma \ref{lem:GN_basic}):
$$
\| T_{\bar{u}} u \|_{L^2}^2 \leq \frac{1}{2 \pi} \| u \|_{L^2}^2 \langle D u, u \rangle.
$$
Therefore if we assume that $\|u_0 \|_{L^2}^2 < 2 \pi$, we deduce the a-priori bound
$$
\sup_{t \in I} \| u(t) \|_{H^{1/2}} \leq C(u_0).
$$
Next, we use the conservation of energy together with a standard Gagliardo--Nirenberg interpolation and Sobolev inequality:
\begin{align*}
E(u) & = 2 I_2(u) = \langle L^2_u u, u \rangle = \langle Du, Du \rangle  - \frac{1}{2} \langle |u|^2, |D| |u|^2 \rangle + \frac{1}{12} \| u \|_{L^6}^6 \\
& \geq \| \pt_x u \|_{L^2}^2 - C \| u \|_{L^6}^3 \| \pt_x u \|_{L^2} \geq \| \pt_x u \|_{L^2}^2 - C \| u \|_{H^{1/2}}^3 \| \pt_x u \|_{L^2}.
\end{align*}
From the a-priori bound on $\|u(t) \|_{H^{1/2}}$, we readily infer that $\sup_{t \in I} \| \pt_x u(t) \|_{L^2} \leq C(u_0)$. This completes the proof for $s=1$.

The remaining case of integer $s \geq 2$ follows by iteration and using the conserved quantities
$$
I_k(u) = \langle L^k u, u \rangle = \| u \|_{\dot{H}^{k/2}}^2 + \mbox{lower order terms} 
$$
with $k=0, \ldots, 2s$. We omit the details.
\end{proof}

\subsection{Proof of Theorem \ref{thm:gwp} for $M(u_0) < M(\RR)$}
The assertions in Theorem \ref{thm:gwp} in the case $M(u_0) < M(\RR)$ follow directly from Corollary \ref{cor:gwp}. \hfill $\qed$

The critical case $M(u_0) = M(\RR)$ will be discussed in the following section.

\section{Nonexistence of Minimal Mass Blowup}

\label{sec:no_min_blowup}

The goal of this section is to rule out finite-time minimal mass blowup for \eqref{eq:NLS} with finite energy. As a consequence, we obtain that initial data $u(0) \in H^s_+(\R)$ with some $s \geq 1$ with critical $L^2$-mass 
$$
M(u(0)) = M(\RR ) = 2 \pi
$$
will always lead to global-in-time solutions $u \in C(\R; H^s_+(\R))$, completing the proof of Theorem \ref{thm:gwp}. 

Notice that the absence of minimal mass blowup is in striking contrast to focusing $L^2$-critical NLS, where the existence of minimal mass blowup is a direct consequence of applying the pseudo-conformal transform to ground state solitary waves. For \eqref{eq:NLS} on the other hand, we will see below that the mechanism that prevents the existence of minimal mass blowup is due to the slow algebraic decay of ground states $\RR \in H^1_+(\R)$ with $|\RR (x)| \sim \frac{1}{|x|}$ as $|x| \to +\infty$.

We begin with the $L^2$-tightness property for $H^1$-solutions on finite time intervals. 

\begin{lem} \label{lem:L2_tight}
Let $I \subset \R$ be an interval of finite length $|I| < \infty$ and suppose that $u \in C(I, H^1(\R))$ solves \eqref{eq:NLS}. Then the family $\{ u(t) \}_{t \in I}$ is tight in $L^2(\R)$, i.\,e., for every $\eps > 0$ there exists $R=R(\eps) > 0$ such that
$$
\int_{|x| \geq R} |u(t,x)|^2 \, dx \leq \eps \quad \mbox{for all $t \in I$}.
$$
\end{lem}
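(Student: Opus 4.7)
The plan is to localize the $L^2$-mass outside a large ball using a smooth cutoff $\chi_R$ and show that its rate of change is controlled by the conserved mass and energy alone, thereby bypassing the need for any uniform $H^1$-bound on $u$ --- which is the delicate point, since the hypothesis $u\in C(I,H^1(\R))$ supplies no uniform $H^1$-bound in general (in particular, this is the scenario one wants to exclude when ruling out minimal-mass blowup). Concretely, I would fix a smooth even function $\chi:\R\to[0,1]$ with $\chi\equiv 0$ on $[-1,1]$ and $\chi\equiv 1$ on $\R\setminus[-2,2]$, and set $\chi_R(x):=\chi(x/R)$ for $R>0$.

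Using \eqref{eq:NLS} in the form $\pt_t u = i\pt_{xx}u + 2iD_+(|u|^2)u$, integrating by parts against the real cutoff $\chi_R$, and exploiting the identity $\mathrm{Im}(f\,D_+ f) = -\tfrac14\pt_x(f^2)$ for real $f=|u|^2$ (a direct consequence of $\Pi_+=\tfrac12(1+i\Hil)$ and $[\Hil,\pt_x]=0$), a short computation should lead to
$$
\frac{d}{dt}\int_\R \chi_R|u|^2\, dx \;=\; 2\,\mathrm{Im}\!\int_\R \chi_R'\,\ov{u}\,\pt_x u\, dx \;-\; \int_\R \chi_R'\,|u|^4\, dx.
$$
The key step will then be to repackage the right-hand side as the imaginary part of a single pairing against the gauge-invariant combination $\pt_x u - iu\,\Pi_+(|u|^2)$, whose squared $L^2$-norm is exactly $2E(u)$ by \eqref{def:E}. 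Using $|u|^4 = 2|u|^2\,\mathrm{Re}\,\Pi_+(|u|^2)$ (valid since $|u|^2$ is real), I expect the derivative to take the form
$$
\frac{d}{dt}\int_\R \chi_R|u|^2\, dx \;=\; 2\,\mathrm{Im}\!\int_\R \chi_R'\,\ov{u}\bigl(\pt_x u - iu\,\Pi_+(|u|^2)\bigr)\, dx,
$$
and Cauchy--Schwarz together with $\|\chi_R'\|_{L^\infty}\leq\|\chi'\|_{L^\infty}/R$ and conservation of $M(u)$ and $E(u)$ then delivers the $t$-independent bound
$$
\left|\frac{d}{dt}\int_\R \chi_R|u|^2\, dx\right| \;\leq\; \frac{2\|\chi'\|_{L^\infty}}{R}\,\|u_0\|_{L^2}\sqrt{2E(u_0)} \;=:\; \frac{C_0}{R}.
$$

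To finish, I would fix an arbitrary $t_0\in I$, integrate this bound on $[t_0,t]\subset I$ to obtain
$$
\int_\R \chi_R|u(t)|^2\, dx \;\leq\; \int_\R \chi_R|u(t_0)|^2\, dx + \frac{C_0|I|}{R} \quad \text{for all } t\in I,
$$
and then, given $\eps>0$, choose $R_1$ with $\int_{|x|\geq R_1}|u(t_0)|^2\, dx<\eps/2$ (possible since $u(t_0)\in L^2(\R)$) and set $R:=\max(R_1,\,2C_0|I|/\eps)$. Since $\chi_R$ is supported in $\{|x|\geq R\}$ and equals $1$ on $\{|x|\geq 2R\}$, this forces $\int_{|x|\geq 2R}|u(t,x)|^2\, dx\leq \eps$ uniformly for $t\in I$, which is the desired tightness (with radius $2R$). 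The main hurdle this plan has to clear is that the naive separate bounds on the two terms in the mass-derivative identity --- Cauchy--Schwarz on the linear term and Gagliardo--Nirenberg $\|u\|_{L^4}^4\lesssim\|u\|_{L^2}^3\|\pt_x u\|_{L^2}$ on the quartic one --- both require control of $\|\pt_x u\|_{L^2}$, which is unavailable on a generic finite-length interval; the gauge-invariant recombination above is precisely what trades this for the conserved energy.
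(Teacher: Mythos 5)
Your proposal is correct and follows essentially the same route as the paper: the identity $\frac{d}{dt}\int_\R \chi_R|u|^2\,dx = 2\,\mathrm{Im}\int_\R \chi_R'\,\ov{u}\,(\pt_x u - i\Pi_+(|u|^2)u)\,dx$ followed by Cauchy--Schwarz against $\sqrt{2E(u_0)}$ and $\|u_0\|_{L^2}/R$ is exactly the content of the paper's flux inequality \eqref{ineq:banica}--\eqref{ineq:banica2}, which the paper obtains by expanding $E(\mathrm{e}^{ia\psi}u)\geq 0$ as a quadratic in $a$ and taking the discriminant (Banica's trick) --- algebraically the same Cauchy--Schwarz step, yielding the identical bound $\lesssim \sqrt{E_0 M_0}/R$. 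The concluding integration over the finite-length interval and the choice of $R$ likewise match the paper's argument.
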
  

\begin{remarks*}
1) It should be noted that no assumption on the size of the solution $u(t)$, i.\,e., we do {\em not} assume that $M(u_0)$ is sufficiently small.  

2) Notice that we do not assume that $u(t)$ belongs to the Hardy space $H^1_+(\R)$, but in fact we allow  for general $H^1$-valued solutions of \eqref{eq:NLS}.
\end{remarks*}

\begin{proof}
The idea is to adapt an elegant argument in \cite{Ba-04} developed for studying minimal mass blowup for $L^2$-critical NLS. Let $\psi \in C^\infty(\R)$ be a smooth real-valued function with bounded derivative $\pt_x \psi \in L^\infty(\R)$. For any $a \in \R$ and $u \in H^1(\R)$, the non-negativity of the energy implies that
$$
E(\mathrm{e}^{i a \psi} u) \geq 0.
$$
Expanding the right-hand side, we find
\begin{align*}
E(\mathrm{e}^{i a \psi} u) & = \frac{1}{2} \int_{\R} |\pt_x(\mathrm{e}^{i a \psi} u) - i \Pi_+(|u|^2) \mathrm{e}^{i a \psi} u|^2 \, dx \\
& = \frac{1}{2} \int_{\R} | i a \pt_x \psi u + \pt_x u - i \Pi_+(|u|^2) u|^2 \, dx \\
& = \frac{a^2}{2} \int_{\R} |\pt_x \psi|^2 |\pt_x u|^2 \, dx + a \mathrm{Re} \, \langle i \pt_x \psi u, \pt_x u - i \Pi_+(|u|^2) u \rangle + E(u) .
\end{align*}
Now we observe that
$$
\mathrm{Re} \, \langle i \pt_x \psi u, \pt_x u \rangle = \int_\R \pt_x \psi \cdot \mathrm{Im}(\ov{u} \pt_x u) \, dx,
$$
$$
 \mathrm{Re} \langle i \pt_x \psi u,  - i \Pi_+(|u|^2) u \rangle = -\frac{1}{2} \mathrm{Re} \langle \pt_x \psi u, (1+i \Hil)(|u|^2) u \rangle = -\frac{1}{2} \int_\R \pt_x \psi |u|^4 \, dx,
 $$
 using that $\Pi_+=\frac{1}{2} (1+i H)$ and $\mathrm{Re} \langle u, iH(|u|^2) u \rangle = 0$ since $H(|u|^2)$ is real-valued. Therefore, the quadratic expansion in $a$ together with $E(\mathrm{e}^{i a \psi} u) \geq 0$ implies that
 \be \label{ineq:banica}
 \left | \int_{\R} \pt_x \psi \left ( \mathrm{Im}(\ov{u} \pt_x u) - \frac{1}{2} |u|^4 \right )  dx \right | \leq \sqrt{2 E_0} \left ( \int_{\R} |\pt_x \psi|^2 |\pt_x u|^2 \,d x \right )^{1/2},
 \ee
with the energy $E_0 = E(u) \geq 0$. 
 
 Next, we apply \eqref{ineq:banica} to obtain the claimed $L^2$-tightness bound. From \eqref{eq:NLS} we deduce
\be \label{eq:con_rho}
 \pt_t |u|^2 =-2 \pt_x\left (  \mathrm{Im} (\ov{u} \pt_x u) - \frac{1}{2}  |u|^4 \right )
\ee
in view of $\mathrm{Re} ( \ov{u} (2i D_+|u|^2) u) =  \mathrm{Re}( (\pt_x |u|^2) |u|^2) = \frac{1}{2} \pt_x |u|^4$. Now let $\chi$ be a smooth nonnegative function such that $\chi(x) \equiv 0$ for $|x| \leq 1/2$ and $\chi(x) \equiv 1$ for $|x| \geq 1$. For $R > 0$, we set $\chi_R(x) = \chi(x/R)$. Integrating by parts and using \eqref{eq:con_rho} and \eqref{ineq:banica}, we infer that
\begin{align} \label{ineq:banica2}
\left | \frac{d}{dt} \int_{\R} \chi_R |u|^2 \, dx \right | & = 2 \left  | \int_\R \pt_x \chi_R \left ( \mathrm{Im}(\ov{u} \pt_x u) - \frac{1}{2} |u|^4 \right )  \, dx \right | \\
& \lesssim \sqrt{E_0} \left ( \int_{\R} |\pt_x \chi_R|^2 |u|^2 \,d x \right )^{1/2} \lesssim \frac{\sqrt{E_0 M_0}}{R} 
\end{align}
with the conserved $L^2$-mass $M_0 = \| u \|_{L^2}^2$. If we integrate this bound over the finite time interval $I \subset \R$ with some $t_0 \in I$ fixed, we finally obtain
$$
\int_{\R} \chi_R(x) |u(t,x)|^2 \, dx \leq \int_\R \chi_R(x) |u(t_0,x)|^2 \, dx + \frac{C |I|}{R}, 
$$
for all $t \in I$. This readily implies the claimed tightness bound.
\end{proof}

\begin{thm}[No Minimal Mass Blowup] \label{thm:min_mass}
Let $I \subset \R$ with $0 \in I$ and finite length $|I| < \infty$. Suppose $u \in C(I; H^1(\R))$ solves \eqref{eq:NLS} with $M(u_0) = M(\RR) = 2\pi$. Then it holds
$$
\sup_{t \in I} \| u(t) \|_{H^1} < +\infty.
$$
\end{thm}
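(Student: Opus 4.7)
I would argue by contradiction: assume $\sup_{t \in I} \|u(t)\|_{H^1} = +\infty$ and pick $t_n \in I$ with $\lambda_n := \|\pt_x u(t_n)\|_{L^2} \to +\infty$. Since $\|u(\cdot)\|_{H^1}$ is continuous on $I$ and $|I| < +\infty$, the accumulation point $t^*$ of $\{t_n\}$ must lie in $\bar I \setminus I$, so blow-up can only occur at the boundary. Introduce the rescaled profiles
$$
\tilde v_n(y) := \lambda_n^{-1/2}\, u(t_n, \lambda_n^{-1} y + x_n)
$$
with centers $x_n \in \R$ to be fixed below. By construction $\|\tilde v_n\|_{L^2}^2 = 2\pi$ and $\|\pt_y \tilde v_n\|_{L^2}^2 = 1$, while the scaling and conservation of the energy give $E(\tilde v_n) = \lambda_n^{-2} E(u_0) \to 0$.

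Next I would identify the blow-up profile as a ground state. Combining the sharp Gagliardo--Nirenberg inequality at critical mass (Lemma~\ref{lem:GN_basic}), the non-negativity and lower semicontinuity of $E$, and the uniqueness of ground states (cf.~Section~\ref{sec:gs}), a standard concentration-compactness argument yields, for $x_n$ chosen to exclude vanishing and along a subsequence, strong convergence $\tilde v_n \to \RR$ in $L^2(\R)$, with phase and translation absorbed into $x_n$; a Galilean boost in the limit is ruled out by momentum conservation $\lambda_n P(\tilde v_n) = P(u_0)$ together with $P(\RR) = 0$. The $L^2$-tightness of Lemma~\ref{lem:L2_tight} additionally forces the $x_n$ to remain bounded, so $x_n \to x^* \in \R$ along a subsequence, which translates into $|u(t_n)|^2 \, dx \to 2\pi \delta_{x^*}$ weakly in the sense of measures.

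The contradiction now confronts this concentration with the slow algebraic decay of $\RR$. Since $|\RR(y)|^2 = 2/(y^2+1)$, one has $\int y^2 |\RR(y)|^2 \, dy = +\infty$, and this is precisely the reason why the only candidate for a minimal-mass blow-up, namely the pseudoconformal image $u_{\mathrm{sing}}(t,x) = t^{-1/2} e^{i x^2/4t} \RR(x/t)$ of the ground state, fails to be $H^1$-valued (see the remarks following Theorem~\ref{thm:gwp}). A Merle-type rigidity argument, adapted from the $L^2$-critical NLS theory to \eqref{eq:NLS}, should then identify any putative minimal-mass blow-up in $H^1(\R)$ with this pseudoconformal profile (up to the symmetries of \eqref{eq:NLS}), contradicting the assumption $u \in C(I; H^1(\R))$. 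The main obstacle is establishing this rigidity: for the standard NLS it is Merle's delicate modulation-theoretic argument, and adapting it to \eqref{eq:NLS} is non-trivial, though the Lax-pair structure of Section~\ref{sec:WP} is available. An alternative and potentially more direct route I would try is to combine the flux estimate \eqref{ineq:banica} applied at the blow-up scale $1/\lambda_n$ with a truncated virial identity and the divergence $\int y^2 |\tilde v_n|^2 \, dy \to +\infty$ obtained from Fatou's lemma, to derive a quantitative upper bound on $\lambda_n$ in terms of $E_0$, $M_0$, $\|u_0\|_{L^\infty}$ and $|I|$ — the slow decay of $\RR$ being in either case the structural mechanism behind the result.
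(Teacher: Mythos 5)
Your setup — contradiction, rescaling at times $t_n$, vanishing of the rescaled energy, and identification of the bubble as $\RR$ via the compactness/uniqueness machinery — matches the paper's Step 1 up to the point where the concentration $|u(t_n)|^2\,dx \weakto 2\pi\,\delta_{x^*}$ is obtained. But from there your argument does not close. Your main route defers the conclusion to "a Merle-type rigidity argument" classifying minimal-mass blowup as the pseudo-conformal image of $\RR$, and you yourself flag establishing that rigidity as the principal obstacle; as written, this is the missing step, not a proof. The paper avoids Merle's rigidity entirely and instead closes the argument in two soft steps that your proposal does not carry out. First, it upgrades the concentration to a \emph{quantitative finite-variance bound}: taking a weight $\psi_R$ with $|\pt_x\psi_R|^2 \lesssim \psi_R$, the flux estimate \eqref{ineq:banica} gives $|\tfrac{d}{dt} g_R(t)| \lesssim \sqrt{g_R(t)}$ for $g_R(t)=\int \psi_R |u(t)|^2$; integrating \emph{backwards} from $t_n$, where $g_R(t_n)\to 0$ by the Dirac concentration, yields $g_R(t)\lesssim (T-t)^2$, and $R\to\infty$ gives $\int |x|^2|u(t)|^2\,dx \lesssim (T-t)^2$. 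Second, the pseudo-conformal identity (Lemma \ref{lem:pseudo}) converts this into $8t^2 E(\mathrm{e}^{i|x|^2/4t}u_0) \lesssim (T-t)^2$, so $E(\mathrm{e}^{i|x|^2/4T}u_0)=0$; by the uniqueness of zero-energy states at mass $2\pi$ (Lemma \ref{lem:R_uniq}) this forces $\mathrm{e}^{i|x|^2/4T}u_0=\RR$ up to symmetries, contradicting the finite variance of $u_0$ since $\int |x|^2|\RR|^2 = +\infty$.

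You correctly sensed that the slow decay of $\RR$ is the structural mechanism, but you located it in the wrong place: the contradiction is not that $u_{\mathrm{sing}}$ fails to be $H^1$-valued, but that the \emph{initial datum itself}, twisted by the quadratic phase, would have to coincide with $\RR$, which is incompatible with the finite variance that the concentration analysis has just established. Your "alternative route" (flux estimate plus truncated virial plus Fatou) is in the right spirit — it is essentially the paper's Step 1 — but as stated it aims at bounding $\lambda_n$ directly, which is not how the argument concludes, and no mechanism is given for extracting such a bound. To repair the proposal you would need to (i) prove the backward-in-time estimate $g_R(t)\lesssim (T-t)^2$ from the concentration at $t_n$, and (ii) invoke the pseudo-conformal/variance identity together with the zero-energy uniqueness lemma rather than any modulation-theoretic rigidity.
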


\begin{remark*}
1) Note again that we allow for general $H^1$-valued solutions $u(t)$. 
\end{remark*}

\begin{proof} 
We argue by contradiction. Without loss of generality, we can assume that $I=[0,T)$ with some finite time $T \in (0, +\infty)$ and let $u \in C([0,T); H^1(\R))$ satisfy
$$
\lim_{t \to T^-} \| u(t) \|_{H^1} = +\infty.
$$

\medskip
\textbf{Step 1.} We first show that $u(t)$ must have finite variance, i.\,e., we have 
\be \label{ineq:fin_var}
\int_{\R} |x|^2 |u(t,x)|^2 \, dx < +\infty \quad \mbox{for $t \in [0,T)$.}
\ee
Here we adapt a strategy developed for classifying minimal-mass finite-time blowup solutions for $L^2$-critical NLS; originally due to Merle in \cite{Me-93}. To prove the claim \eqref{ineq:fin_var}, we follow the arguments laid out in \cite{HmKe-05, Ba-04}.

Let $t_n \to T^-$ be a sequence of times. We define 
$$\eps_n :=\frac{1}{\Vert \partial_xu(t_n)\Vert_{L^2}}\ ,\ v_n(x):=\eps_n^{1/2}u(t_n,\eps_nx)\ .$$
Applying the Minimal Mass Bubble Lemma \ref{lem:MMB} from Appendix \ref{varE} to $v_n$, after passing to a subsequence if necessary, there exist sequences  $x_n \in \R$ and $\lambda_n > 0$ such that $\lambda_n \to 0$ and 
$$
\lambda_n^{1/2} u(t_n, \lambda_n (x+ x_n)) \to \mathrm{e}^{i \theta } \RR (x) \quad \mbox{strongly in $L^2(\R)$}
$$
for some $\theta \in [0,2\pi[$, with the ground state $\RR \in H^1_+(\R)$ minimizing the energy functional $E(u)$ on $H^1(\R)$. Thus we obtain
$$
|u(t_n, x)|^2 \, dx - \| \RR \|_{L^2}^2 \delta_{x=\lambda_n x_n} \weakto 0
$$
in the weak sense of measures. By the $L^2$-tightness property in Lemma \ref{lem:L2_tight}, we easily deduce that $\lambda _n|x_n| \leq C$ with some constant $C>0$. From this fact (and passing to a subsequence) and by translational invariance we can henceforth assume that $\lambda_n x_n \to 0$ holds. 

Next, let $\psi \in C^\infty_0(\R)$ be a non-negative function such that $\psi(x) \equiv |x|^2$ for $|x| < 1$ and $|\pt_x \psi(x)|^2 \leq C \psi(x)$ with some constant $C>0$. For $R> 0$, we set $\psi_R(x) = R^2 \psi(x/R)$ and we define
$$
g_R(t) = \int_\R \psi_R(x) |u(t,x)|^2 \, dx.
$$
In analogy to \eqref{ineq:banica2} based on \eqref{ineq:banica} we find
\be
\left |\frac{d}{dt} {g}_R(t) \right | \lesssim \int_{\R} |\pt_x \psi_R|^2  |u|^2 \,dx \lesssim \sqrt{g_R(t)}
\ee 
where the last step used that $|\pt_x \psi_R|^2 \lesssim \psi_R$ by construction. By integrating this on $[t, t_n]$ and using that $g_R(t_n) \to 0$ as $n \to \infty$, we deduce
$$
g_R(t) = \int_\R \psi_R(x) |u(t,x)|^2 \,dx \lesssim (T-t)^2 \quad \mbox{for $t \in [0,T)$}.
$$
Passing to the limit $R \to +\infty$, this yields
\be
\int_\R |x|^2 |u(t,x)|^2 \, dx \lesssim (T-t)^2 \quad \mbox{for $t \in [0,T)$}.
\ee 
In particular, this implies that \eqref{ineq:fin_var} holds, showing that $u(t)$ has finite variance.

\medskip
\textbf{Step 2.}  By Step 1, we have $u_0=u(0) \in \Sigma = H^1(\R) \cap L^2(\R; |x|^2 dx)$ and thus we can apply the pseudo-conformal identity (see Lemma \ref{lem:pseudo}) to conclude that  
\be
8 t^2 E(\mathrm{e}^{i |x|^2/4t} u_0) = \int_{\R} |x|^2 |u(t,x)|^2 \, dx \lesssim (T-t)^2 .
\ee
If we pass to the limit $t \to T^-$, we obtain
\be
E(\mathrm{e}^{i |x|^2/4T} u_0) = 0.
\ee
Since $\|\mathrm{e}^{i |x|^2/4T} u_0 \|_{L^2}^2 = 2 \pi$, the uniqueness result in Lemma \ref{lem:R_uniq} implies that 
\be
\mathrm{e}^{i |x|^2/4T} u_0(x)= \RR(x)
\ee
up to translation, phase, and scaling. However, the fact that $\mathrm{e}^{i |x|^2/4T} u_0 \in \Sigma$ contradicts that $\RR(x)$ has infinite variance, i.\,e.,
$$
\int_\R |x|^2 |\RR(x)|^2 \, dx =  +\infty.
$$
This contradiction shows that any  $u(t) \in H^1(\R)$ solving \eqref{eq:NLS} with $M(u(0)) = M(\RR) = 2 \pi$ cannot blowup in finite time. This completes the proof of Theorem \ref{thm:min_mass}. Therefore, the proof of Theorem \ref{thm:gwp} is also complete.
\end{proof}

\section{Ground States and Traveling Solitary Waves}

\label{sec:gs}

In this section, we show that all ground states (minimizers) for the energy functional $E(u)$ for \eqref{eq:NLS} are given by the rational function
\be  \label{def:R}
\RR(x) = \frac{\sqrt{2}}{x+i} \in H^1_+(\R)
\ee
modulo translation, phase, and scaling. As as a second main result of this section, we prove that all traveling solitary waves in $H^1_+(\R)$ for \eqref{eq:NLS} are given by Galilean boosts (with positive velocity) of $\RR$ up to the symmetries just mentioned.

We start with the following key result, which shows uniqueness of non-trivial solutions of the (first-order) Euler-Lagrange equation for minimizers of $E(u)$.

\begin{lem}[Uniqueness of Ground States] \label{lem:R_uniq}
Suppose that $u \in H^1(\R)$ with $u \not \equiv 0$ solves 
$$
D u - \Pi_+(|u|^2) u = 0.
$$ 
Then it holds
$$
u(x) = \mathrm{e}^{i \theta} \lambda^{1/2} \RR(\lambda x+y)
$$
with some $\theta \in [0, 2\pi)$, $\lambda > 0$, $y  \in \R$, and $\RR \in H^1_+(\R)$ is  given by \eqref{def:R}.

As a consequence, all minimizers $u \in H^1(\R) \setminus \{ 0 \}$ for $E(u)$ are of the form $u=\RR$ modulo phase, translation, and scaling.
\end{lem}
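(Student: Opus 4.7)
My strategy has three stages: a Hardy-space reduction, a Pohozaev-type identity fixing the $L^2$-mass, and an application of the sharp Gagliardo--Nirenberg inequality.

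First I show $u \in L^2_+(\R)$. Decomposing $u = u_+ + u_-$ with $u_\pm := \Pi_\pm u$ and applying $\Pi_-$ to the equation $Du = \Pi_+(|u|^2)u$, the piece $\Pi_+(|u|^2) u_+$ drops out (being a product of Hardy functions in $L^2_+$), leaving $Du_- = \Pi_-[\Pi_+(|u|^2) u_-]$. Pairing with $u_-$ in $L^2$ and taking real parts gives
\[-\|u_-\|_{\dot H^{1/2}}^2 \;=\; \tfrac{1}{2}\int_\R |u|^2 |u_-|^2,\]
which is both $\le 0$ and $\ge 0$, hence both sides vanish and $u_- \equiv 0$; thus $u \in H^1_+(\R)$.

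Next I derive a Pohozaev identity forcing $M := \|u\|_{L^2}^2 = 2\pi$. Writing $u = \rho e^{i\phi}$ and splitting the equation into its real and imaginary parts yields $\phi' = \tfrac{1}{2}\rho^2$ and $\rho' = -\tfrac{1}{2}\Hil(\rho^2)\rho$; setting $w := |u|^2$ gives the scalar equation $w' = -w\Hil(w)$. Testing against $x$ and using the commutator identity $\Hil(xf) - x\Hil(f) = -\tfrac{1}{\pi}\widehat{f}(0)$ together with the skew-adjointness of $\Hil$, one computes $\int xw\Hil(w)\,dx = M^2/(2\pi)$, while integration by parts gives $\int xw'\,dx = -M$. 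Combining yields $-M = -M^2/(2\pi)$ and hence $M = 2\pi$.

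For the final stage I invoke the sharp Gagliardo--Nirenberg inequality of Lemma~\ref{lem:GN_basic}, namely $\|u\|_{L^4}^4 \le \tfrac{1}{\pi}\|u\|_{L^2}^2 \langle Du, u\rangle$. Since the equation rewrites as $L_u u = 0$, pairing with $u$ yields the identity $\langle Du, u\rangle = \|T_{\bar u}u\|_{L^2}^2 = \|\Pi_+(|u|^2)\|_{L^2}^2 = \tfrac{1}{2}\|u\|_{L^4}^4$ (the last step using $u \in L^2_+$). Substituting $\|u\|_{L^2}^2 = 2\pi$ from the Pohozaev step forces equality in the GN inequality, and the rigidity of its equality case then identifies $u = e^{i\theta}\lambda^{1/2}\RR(\lambda x + y)$ for some $\theta, \lambda > 0, y$, as claimed.

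The main technical obstacle is justifying the integration by parts $\int xw'\,dx = -M$, since no pointwise decay of $u$ is a priori known. The fix is a subsequence argument: because $w \ge 0$ is continuous with $w \in L^1(\R)$, one has $\liminf_{R\to\infty} R w(\pm R) = 0$ (otherwise $w \gtrsim 1/|x|$ at infinity, contradicting $w \in L^1$), so a suitable $R_n \to \infty$ kills the boundary terms. Meanwhile $xw\Hil(w) \in L^1$ follows from the asymptotic $\Hil(w)(x) \sim M/(\pi x)$ at infinity (a standard property of the Hilbert transform of an $L^1$ function), which yields $|xw\Hil(w)| \lesssim w$ in that regime and thus permits dominated convergence on the right-hand side of the truncated identity.
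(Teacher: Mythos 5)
Your route is genuinely different from the paper's (which sets $w=\Pi_+(|u|^2)$, derives the closed complex ODE $w'=\ii w^2$ from the equation and the identity $\Pi_+(\ov w^2)=0$, solves it explicitly as $w=\ii/(x-z)$, and then integrates $u'=\ii uw$). Your first two stages are essentially sound: the $\Pi_-$-projection argument showing $u_-\equiv 0$ is correct, and the Pohozaev computation $\int_\R xw\,\Hil(w)\,dx=M^2/(2\pi)$ is exactly identity \eqref{eq:Hil1} of the appendix, so $M=2\pi$ does follow (modulo the justification issue below). But the final step contains a genuine gap: you invoke ``the rigidity of the equality case'' of the sharp inequality $\|T_{\ov u}u\|_{L^2}^2\leq \frac{1}{2\pi}\|u\|_{L^2}^2\langle Du,u\rangle$ from Lemma \ref{lem:GN_basic}, but neither the paper nor your argument establishes what the equality case is. The proof of that lemma is a Fourier-side Cauchy--Schwarz, so characterizing equality requires showing (a) that equality in the integrated inequality forces, for a.e.\ $\xi>0$, linear dependence of $\hat u(\xi+\cdot)$ and $\hat u(\cdot)$ in $L^2(0,\infty)$, i.e.\ $\hat u(\xi+\eta)=c(\xi)\hat u(\eta)$ a.e., and (b) that this measurable multiplicative functional equation forces $\hat u(\eta)=A\,\mathrm{e}^{\ii z\eta}$ with $\mathrm{Im}\,z<0$, hence $u=A'/(x-z)$. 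This is precisely the content of the uniqueness statement you are trying to prove, so asserting it without proof is not an acceptable black box; you must supply (a) and (b), or replace this step by the paper's ODE argument.

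A secondary but real issue is your justification of $\int_\R xw\,\Hil(w)\,dx<\infty$ via ``$\Hil(w)(x)\sim M/(\pi x)$ at infinity, a standard property of the Hilbert transform of an $L^1$ function.'' That asymptotic is \emph{not} valid for general $w\in L^1$; it requires additional decay such as $\int_\R |y|\,w(y)\,dy<\infty$, which you do not know a priori (indeed $|x|\,|\RR(x)|^2\notin L^1$ is exactly the borderline behavior here, and one only gets $x\Hil(w)(x)=\frac{M}{\pi}+\Hil(yw)(x)$, whose second term you cannot control). The remedy is to work with the truncated identity on $[-R_n,R_n]$ throughout and to prove convergence of the right-hand side directly, e.g.\ using $w\in W^{1,1}\cap L^\infty$ and a Plancherel argument as in the paper's verification of \eqref{eq:Hil1}, rather than pointwise asymptotics of $\Hil(w)$. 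This part is patchable; the rigidity step is the one that needs a genuine new argument.
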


\begin{remarks*}
1) Note we that only assume that $u \in H^1(\R)$ and  we obtain {\em a posteriori} that $u \in H^1_+(\R)$ due to its explicit form. 

2) As an interesting aside, we remark that the equation for $u \in H^1(\R)$ above can be recast into the {\em nonlocal Liouville equation} on the real line: 
\be \label{eq:liouville}	
|D| w = \mathrm{e}^w \quad \mbox{in $\R$}.
\ee 
To prove this claim (neglecting any technicalities of function spaces for simplicity), we first apply the gauge transform introducing the function $v = \mathrm{e}^{-i/2 \int_{-\infty}^x |u|^2} u$, which leads to the equation for $v$ given by
\be \label{eq:v}
\pt_x v + \frac{1}{2} \Hil(|v|^2) v = 0 \quad \mbox{in $\R$},
\ee
where we recall that $\Hil$ denotes the Hilbert transform. It is easy to see non-trivial solutions $v$ are (up to a complex phase factor) strictly positive $v>0$. Finally, if we define $w : \R \to \R$ by setting $w= \log(v^2)$, we readily check that $w$ solves \eqref{eq:liouville}. In \cite{ChYa-97,Xu-05,LiMa-17} it is proven that all solutions $w \in L^1(\R; \frac{dx}{1+x^2})$ of \eqref{eq:liouville} are explicitly given by
\be
w(x) = \log \left ( \frac{2 \lambda}{1+\lambda^2(x-y)} \right )
\ee
with some constants $\lambda > 0$ and $y \in \R$. From this uniqueness result for $w$, we could obtain the result of Lemma \ref{lem:R_uniq}. However, we will below give a self-contained (and short) uniqueness proof based on Hardy space arguments and complex ODEs. This method  also provides (yet) another proof of the stated uniqueness result for $w$ solving \eqref{eq:liouville}.  See also \cite{AhLe-22} for a recent uniqueness result for solutions $w$ of the nonlocal Liouville equation $|D| w = Ke^w$ in $\R$ with the prescribed function $K : \R \to \R$.
\end{remarks*}

\begin{proof}[Proof of Lemma \ref{lem:R_uniq}]
We introduce the function
$$
w := \Pi_+(|u|^2) \in H^1_+(\R),
$$
so that $u' = \ii uw$ and $|u|^2 = w + \ov{w}$. We obtain the complex ordinary differential equation
$$
w' = \Pi_+ (u' \ov{u} + \ov{u}' u)  = \Pi_+( \ii w(w+ \ov{w}) - \ii \ov{w}(\ov{w}+w)) = \ii w^2,
$$
using that $\Pi_+(\ov{w}^2)=0$ for $w \in H^1_+(\R)$. Since $w \not \equiv 0$, we deduce
$$
w(x) = \frac{\ii}{x -z}
$$
for some constant $z \in \C$ with $\mathrm{Im} \, z <0$ because $w \in L^2_+(\R)$. Consequently,
$$
u' = \ii u w = -\frac{u}{x-z} .
$$
Thus we have $(x-z) u(x) = c$ with some constant $c \in \C$. To determine $c$, we notice
$$
\frac{\ii}{x-z} = \Pi_+(|u|^2) =  \Pi_+\left ( \frac{|c|^2}{|x-z|^2} \right ) = \frac{|c|^2}{(\ov{z}-z)(x-z)} . 
$$
This implies that $|c|^2 = - 2 \mathrm{Im} \, z = 2 |\mathrm{Im} \, z|$. In summary, we have found that
$$
u(x) = \mathrm{e}^{\ii \theta} \frac{\sqrt{2 |\mathrm{Im} \, z|}}{x-z} 
$$
with some $\theta \in [0, 2\pi)$ and $z \in \C_-$. From this fact we readily deduce that $u(x) = \mathrm{e}^{i \theta} \lambda^{1/2}\RR(\lambda x + y)$ with $\lambda = -(\mathrm{Im} z)^{-1}>0$ and $y=-\mathrm{Re} \, z$. 
\end{proof}

Our next goal is to classify all traveling solitary wave solutions for \eqref{eq:NLS}.  By a \textbf{traveling solitary waves} (with finite energy), we mean solutions of the form
\be
u(t,x) = \mathrm{e}^{i \omega t} \RR_{v,\omega}(x-v t)
\ee
where $\omega \in \R$ is a frequency parameter and $v \in \R$ denotes the velocity. Here the non-trivial profile $\RR_{v,\omega} \in H^1(\R)$ is allowed to depend on $\omega$ and $v$. Note that a-priori we allow also for $\RR_{v,\omega}$ in $H^1(\R)$ and not just restricted to $H^1_+(\R)$. We have the following complete classification result, which shows that all traveling solitary waves for \eqref{eq:NLS} are generated by Galilean boosts, translations, scaling and phase transformations of the ground state $\RR(x)$. 

\begin{prop} \label{prop:travel}
Let $v, \omega \in \R$ and suppose $u(t,x)$ is a traveling solitary wave for \eqref{eq:NLS} with profile $\RR_{v,\omega} \in H^1(\R) \setminus \{ 0 \}$. Then it holds
$$
\omega = -\frac{v^2}{4} \quad \mbox{and} \quad \RR_{v,\omega}(x) = \mathrm{e}^{\frac{\ii}{2} v x} \mathrm{e}^{i \theta} \lambda^{1/2} \RR(\lambda x + y)
$$
with some $\theta \in [0, 2\pi), \lambda > 0$, and $y \in \R$. Moreover, we have $\RR_{v, \omega} \in H^1_+(\R)$ if and only if $v \geq 0$ holds.
\end{prop}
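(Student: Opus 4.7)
The plan is to use the Galilean invariance of \eqref{eq:NLS} to reduce to classifying standing waves, and then employ a Pohozaev-type identity together with the sharp energy inequality to force the standing-wave profile to satisfy the first-order ground-state equation; Lemma \ref{lem:R_uniq} then completes the classification.

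Substituting the ansatz $u(t,x) = \mathrm{e}^{\ii\omega t} \RR_{v,\omega}(x-vt)$ into \eqref{eq:NLS} produces the profile equation
\begin{equation*}
Q'' - \ii v Q' + 2 D_+(|Q|^2) Q = \omega Q, \qquad Q := \RR_{v,\omega}.
\end{equation*}
Applying the Galilean boost with parameter $\eta = -v/2$ converts $u$ into a standing wave $\tilde u(t,x) = \mathrm{e}^{\ii \tilde\omega t} \tilde Q(x)$ with $\tilde Q(x) := \mathrm{e}^{-\ii v x/2} Q(x) \in H^1(\R)$ and $\tilde\omega = \omega - v^2/4$, whose profile satisfies $\tilde Q'' + 2 D_+(|\tilde Q|^2) \tilde Q = \tilde\omega \tilde Q$. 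I would next apply the gauge transform $V(x) := \mathrm{e}^{-\frac{\ii}{2} \int_{-\infty}^x |\tilde Q|^2 \, dy} \, \tilde Q(x)$ from \eqref{def:gauge}, which (exactly as in the paper's derivation of the Hamiltonian formulation) turns the standing-wave equation into the cleaner form
\begin{equation*}
V'' + |D|(|V|^2) V - \tfrac{1}{4} |V|^4 V = \tilde\omega V.
\end{equation*}
Multiplying this equation by $\bar V$ and by $x \bar V'$, taking real parts, and integrating yields two scalar identities -- a mass identity and a Pohozaev identity; the Pohozaev calculation is simplified by the vanishing $\int x |D|(|V|^2) \, \pt_x |V|^2 \, dx = 0$, a short Plancherel computation. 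An algebraic combination of the two identities then produces $\widetilde E(V) = 0$, and since $\widetilde E(V) = \tfrac{1}{2}\|V' + \tfrac{1}{2}\Hil(|V|^2) V\|_{L^2}^2 \geq 0$, this forces the first-order equation $V' = -\tfrac{1}{2}\Hil(|V|^2) V$, equivalent to $D\tilde Q = \Pi_+(|\tilde Q|^2) \tilde Q$. Lemma \ref{lem:R_uniq} then gives $\tilde Q(x) = \mathrm{e}^{\ii\theta} \lambda^{1/2} \RR(\lambda x + y)$, and substituting this back into the standing-wave equation (where $\RR$ is itself a static solution) forces $\tilde\omega = 0$ and hence fixes $\omega$ through the relation $\tilde\omega = \omega - v^2/4$.

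Undoing the Galilean gauge produces $\RR_{v,\omega}(x) = \mathrm{e}^{\ii v x/2} \mathrm{e}^{\ii\theta} \lambda^{1/2} \RR(\lambda x + y)$ in the stated form. For the chirality criterion, $\RR \in H^1_+(\R)$ has Fourier transform supported in $[0, \infty)$, so multiplication by $\mathrm{e}^{\ii v x/2}$ shifts this support to $[v/2, \infty)$; the latter lies in $[0,\infty)$ precisely when $v \geq 0$. The main technical hurdle is the rigorous derivation of the Pohozaev identity: a direct critical-point argument fails because the formal scaling variation $\tfrac{1}{2}\tilde Q + x \tilde Q'$ need not lie in $H^1(\R)$ (indeed $x\RR \notin L^2$). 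Instead, one proves it by first bootstrapping $\tilde Q \in H^\infty(\R)$ from the standing-wave equation and then integrating by parts directly, verifying that all boundary terms decay.
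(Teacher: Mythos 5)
Your proposal follows essentially the same route as the paper's proof: reduction to a standing wave by a Galilean boost, passage to the gauged profile, the mass and Pohozaev identities (with the cross term killed by the Plancherel computation behind \eqref{eq:Hil1}) forcing $\widetilde E=0$ and hence the first-order equation, then Lemma \ref{lem:R_uniq}; your Fourier-support argument for the chirality criterion is also the intended one. One remark: your relation $\tilde\omega=\omega-\frac{v^2}{4}$ (hence $\omega=+\frac{v^2}{4}$) is the one consistent with writing the boosted ground state $\mathrm{e}^{i\eta x-i\eta^2 t}\RR(x-2\eta t)$ in the form $\mathrm{e}^{i\omega t}\RR_{v,\omega}(x-vt)$, whereas the paper's Step~1 records $\tilde\omega=\omega+\frac{v^2}{4}$ and the proposition states $\omega=-\frac{v^2}{4}$; the sign in the printed statement appears to be the erroneous one, so keep your computation as it stands.
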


\begin{proof}
We divide the proof into the following steps.

\medskip
\textbf{Step 1.} We easily check that $\RR_{v,\omega} \in H^1(\R)$ must solve
\be
-\pt_{xx} \RR_{v, \omega} + \ii v \pt_x \RR_{v,\omega} - 2D_+(|\RR_{v,\omega}|^2) \RR_{v,\omega} = \omega \RR_{v,\omega}.
\ee
The term $\ii v \pt_x \RR_{v, \omega}$ can be removed by a Galilean boost transform. That is, we write
\be
\RR_{v,\omega}(x) = \mathrm{e}^{\frac{\ii}{2} vx} \RR_{\tilde{\omega}}(x).
\ee
An elementary calculation yields that $\RR_{\tilde{\omega}} \in H^1(\R)$ satisfies
\be \label{eq:R_w}
-\pt_{xx} \RR_{\tilde{\omega}} - 2 D_+(|\RR_{\tilde{\omega}}|^2) \RR_{\tilde{\omega}} = \tilde{\omega} \RR_{\tilde{\omega}} \quad \mbox{with} \quad \tilde{\omega} = \omega + \frac{v^2}{4}.
\ee

\medskip
\textbf{Step 2.} We claim that every solution $\RR_{\tilde{\omega}} \in H^1(\R)$ of \eqref{eq:R_w} has zero energy:
\be \label{eq:E_poho}
E(\RR_{\tilde{\omega}}) = 0.
\ee
Since $\RR_{\tilde{\omega}} \not \equiv 0$, we see from Lemma \ref{lem:R_uniq} that 
$$
\RR_{\tilde{\omega}} = \RR
$$ 
modulo symmetries. As a consequence, we obtain $\tilde{\omega} = 0$ and therefore $\omega = -\frac{v^2}{4}$ as claimed.

The proof of \eqref{eq:E_poho} follows from applying the gauge transform together with a Pohozaev-type argument. Since $D_+(|\RRw|^2) \in L^2(\R)$, we notice that equation \eqref{eq:R_w} tells us that $\RRw \in H^2(\R)$ holds. Next, we apply the gauge transform $\Phi$ discussed in Appendix \ref{app:misc} to $\RRw$. That is, we set
\be \label{eq:gauge2}
S(x) := \Phi(\RRw)(x) = \mathrm{e}^{-\frac{i}{2} \int_{-\infty}^x |\RRw(y)|^2 \, dy} \RRw(x).
\ee 
We directly check that $S \in H^2(\R)$ and $|S|^2 = |\RRw|^2$. Using that $|D| = \Hil \pt_x$ and $\Pi_+ = \frac{1}{2}(1+\ii \Hil)$, a calculation yields that
\be \label{eq:S_om}
-\pt_{xx} S - (|D| |S|^2) S + \frac{1}{4} |S|^4 S = \tilde{\omega} S.
\ee
If we integrate this equation against $\ov{S}$, we directly obtain
\be \label{eq:poho1}
\int_{\R}  |\pt_x S|^2 - \int_\R |S|^2 (|D| |S|^2) + \frac{1}{4}  \int_\R |S|^6  = \tilde{\omega} \int_{\R} |S|^2.
\ee
Next, we  integrate \eqref{eq:S_om} against $x \pt_x \ov{S}$ over the compact interval $[-R,R]$. By taking the real part and taking the limit $R \to +\infty$, we find the identity
\be \label{eq:poho2}
\frac{1}{2} \int_{\R} |\pt_x S|^2 - \frac{1}{24} \int_{\R} |S|^6  = -\frac{\tilde{\omega}}{2} \int_{\R} |S|^2.
\ee
For details of this step, we refer to Appendix \ref{app:misc}. The combination of \eqref{eq:poho1} and \eqref{eq:poho2} yields
\be
 \frac{1}{2} \int_\R |\pt_x S|^2 - \frac{1}{4} \int_{\R} |S|^2 (|D| |S|^2) + \frac{1}{24} \int_{\R} |S|^6 = 0.
\ee
Recall that the right side can be written as a complete square (see Appendix \ref{app:misc})
$$
\frac{1}{2} \int_\R |\pt_x S + \frac{1}{2} \Hil(|S|^2) S|^2 \, dx = \frac{1}{2} \int_\R |\pt_x \RRw - \frac{i}{2} \Pi_+(|\RRw|^2) \RRw |^2 \,dx,
$$
where the last step follows by using \eqref{eq:gauge2}. Thus we deduce that \eqref{eq:E_poho} holds, which completes the proof.
\end{proof}

\subsection{Proof of Theorem \ref{thm:trav_wave}}
This claim immediately follows from Proposition \ref{prop:travel} by taking $\eta = v/2$. \hfill $\qed$

\section{Analysis of the Lax Operator}

\label{sec:lax}

In this section, we will further study the Lax operator 
$$
L_u = D - T_u T_{\bar{u}}
$$
introduced in Section \ref{sec:WP} above, where $T_b(f) = \Pi_+ (bf)$ denotes the Toeplitz operator on $L^2_+(\R)$ with symbol $b$. In particular, we will derive important commutator identities and prove simplicity of eigenvalues of $L_u$ for general potentials $u \in H^1_+(\R)$ along with optimal bounds on the number of eigenvalues of $L_u$. Moreover, we will define the notion of \textbf{multi-soliton potentials} below.

\subsection{Simplicity of Eigenvalues and Sharp Bounds for $L_u$}
Given $u\in H^1_+(\R )$\footnote{Recall that the regularity assumption  can be relaxed to $u \in L^2_+(\R)$ via an approach using quadratic forms; see Appendix \ref{app:lax} below.}, we recall that the  operator $L_u$ defined above is an unbounded self-adjoint operator on the Hardy space $L^2_+(\R )$, with 
the operator domain $H^1_+(\R )$. First we show simplicity of eigenvalues for $L_u$ together with a bound on the number of eigenvalues.

\begin{prop}\label{prop:L_basic}
If $L_u\psi =\lambda \psi$, then 
\be \label{identity}
|\inner u \psi |^2=2\pi \norm {\psi}_{L^2}^2\ .
\ee
In particular, every eigenvalue of $L_u$ is simple, and the number $N$ of eigenvalues of $L_u$ is finite with 
$$N\leq \frac{\norm{u}_{L^2}^2}{2\pi}\ .$$
In particular, the operator $L_u$ has no point spectrum if $\| u \|_{L^2}^2 < 2 \pi$.
\end{prop}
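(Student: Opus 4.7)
The plan is to reduce the identity \eqref{identity} to a single commutator formula,
$$
[L_u, M_x] \;=\; -i\,I \;+\; \frac{i}{2\pi}\,|u\rangle\langle u|,
$$
where $M_x$ denotes multiplication by the coordinate and $|u\rangle\langle u|$ is the rank-one operator $f\mapsto \langle f,u\rangle\, u$. Once this identity is established, \eqref{identity}, simplicity, and the counting bound all follow mechanically.

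To derive the commutator formula I would start from the elementary relation
$$
[\Pi_+, M_x]\,h \;=\; -\frac{i}{2\pi}\int_\R h(x)\,dx,
$$
which comes from a single integration by parts on the Fourier representation of $\Pi_+$ recalled in the Introduction. Applying this to the inner Toeplitz factor gives $T_{\bar u}(xf) = x\,T_{\bar u}f - \frac{i}{2\pi}\langle f,u\rangle$. Applying it again to the outer factor with $g = T_{\bar u}f$ gives $T_u(xg) = x\,T_u g - \frac{i}{2\pi}\int u g\,dx$; the latter integral vanishes because the adjoint identity $\int \Pi_+(F)\cdot G = \int F\cdot \Pi_-(G)$ together with $\Pi_-u = 0$ yields $\int u\cdot \Pi_+(\bar u f) = \int \bar u f\cdot \Pi_- u = 0$. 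Combined with $T_u(1) = \Pi_+ u = u$ and $[D, M_x] = -i$, these two local computations stack up to the commutator formula displayed above.

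With the identity in hand, the rest is immediate. For an eigenpair $(\lambda,\psi)$, the self-adjointness of $L_u$ and $M_x$ forces
$$
\langle [L_u, M_x]\psi,\,\psi\rangle \;=\; \lambda\langle M_x\psi,\psi\rangle - \lambda\langle M_x\psi,\psi\rangle \;=\; 0,
$$
and reading off the right-hand side gives $0 = -i\|\psi\|_{L^2}^2 + \frac{i}{2\pi}|\langle u,\psi\rangle|^2$, which is exactly \eqref{identity}. Simplicity of eigenvalues then follows, since the linear functional $\psi\mapsto \langle u,\psi\rangle$ is, by \eqref{identity}, injective on every eigenspace (its kernel would consist of eigenfunctions of zero $L^2$-norm), so no eigenspace has dimension greater than one. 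For the counting bound, any orthonormal family of eigenfunctions $\psi_1,\ldots,\psi_N$ satisfies
$$
2\pi N \;=\; \sum_{j=1}^N |\langle u,\psi_j\rangle|^2 \;\leq\; \|u\|_{L^2}^2
$$
by Bessel's inequality; in particular, $\|u\|_{L^2}^2 < 2\pi$ forces $N = 0$.

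The one genuine obstacle is that $M_x$ is unbounded, so \emph{a priori} $M_x\psi$ need not lie in $L^2_+$ and none of the inner products above are automatically finite. I would handle this by a standard truncation: run the whole argument with a smooth approximant $\chi_R$ equal to $x$ on $[-R,R]$ and with $\chi_R'$, $\chi_R''$ uniformly bounded in $R$, then let $R\to\infty$. The algebraic cancellation $\langle [L_u, M_{\chi_R}]\psi,\psi\rangle = 0$ persists at every finite $R$, the truncated commutator converges to $-iI + \frac{i}{2\pi}|u\rangle\langle u|$ on the pair $(u,\psi)\in L^2_+\times L^2_+$, and dominated convergence produces \eqref{identity} in the limit. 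Everything beyond this is bookkeeping.
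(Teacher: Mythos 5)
Your strategy is essentially the paper's: the commutator identity you write down, $[L_u,M_x]=-iI+\tfrac{i}{2\pi}\,\langle\cdot,u\rangle u$, is precisely Lemma \ref{lem:secondbracket} of the paper with $M_x$ replaced by the generator $G$ of the adjoint Beurling--Lax semigroup $S(\eta)^*$ (which is exactly the compression of multiplication by $x$ to $L^2_+(\R)$), and the paper obtains \eqref{identity} by pairing the difference quotient $[\,S(\eta)^*/\eta,\,L_u]\psi$ with $\psi$ and letting $\eta\to0$ (Lemma \ref{lem:firstbracket}). Your algebra for the commutator is correct, your treatment of the cross term $\int u\,\Pi_+(\bar u f)=0$ is correct, and the deduction of \eqref{identity}, simplicity, and the counting bound from the commutator identity coincides with the paper's word for word.

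The gap is in the regularization. Truncating $x$ to $\chi_R$ cures the square-integrability of $M_x\psi$, but it does not cure the problem you actually face: $M_{\chi_R}$ does \emph{not} preserve the Hardy space, so $\chi_R\psi\notin L^2_+(\R)$ in general and the expression $\langle L_uM_{\chi_R}\psi,\psi\rangle$ is undefined for the operator $L_u$ acting on $L^2_+(\R)$; consequently the cancellation $\langle[L_u,M_{\chi_R}]\psi,\psi\rangle=0$ does not ``persist at every finite $R$'' as stated. You must either compress, replacing $M_{\chi_R}$ by $\Pi_+M_{\chi_R}\Pi_+$ (which introduces additional Hankel-type commutators $[\Pi_+,M_{\chi_R}]$ beyond the ones you computed), or extend $L_u$ self-adjointly to all of $L^2(\R)$ (possible, since $T_uT_{\bar u}f=u\,\Pi_+(\bar u f)$ for $u\in H^1_+$ defines a self-adjoint operator on $L^2(\R)$) and work there. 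Moreover, the convergence of the truncated commutators to $-iI+\tfrac{i}{2\pi}\langle\cdot,u\rangle u$ when applied to $\psi$ is the real analytic content of the step --- it is the analogue of the paper's Lemma \ref{lem:firstbracket}, proved there by an explicit Fourier-side computation --- and cannot be dismissed as bookkeeping. The cleanest repair is the paper's own: replace $M_{\chi_R}$ by $S(\eta)^*/\eta$, which maps $L^2_+(\R)$ to itself and whose commutator with $L_u$ has an explicit kernel on the Fourier side converging in $L^2$ to $f-\tfrac{1}{2\pi}\langle f,u\rangle u$.
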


\begin{remark*}
In \cite{Wu-16}, an identity reminiscent to \eqref{identity} was derived for the Lax operator of the Benjamin-Ono equation on the real line.  
\end{remark*}

\begin{proof}
The key is to introduce the {\em Lax--Beurling semigroup of contractions} defined on $L^2_+(\R )$ as
$$S(\eta )f (x)={\rm e}^{i\eta x}f(x)\ ,\ \eta \ge 0\ .$$
and the corresponding {\em adjoint semigroup} defined as
$$S(\eta )^*f=\Pi_+\left ({\rm e}^{-ix\eta }f\right )\ ,\ \eta \ge 0\ .$$
Clearly, $S(\eta )$ and $S(\eta )^*$ act on the domain $H^1_+(\R)$ of $L_u$. Also notice that, via the Fourier transform, we have
$$\widehat{S(\eta)^*f}(\xi )=\hat f(\xi +\eta )\ ,\ \xi\ge 0, \eta \ge 0\ .$$
To complete the proof of Proposition \ref{prop:L_basic}, we need the following identity.

\begin{lem}\label{lem:firstbracket}
For every $f\in H^1_+(\R)$,  we have the following limit in the $L^2$-norm,
$$\lim_{\eta \to 0}\left [\frac{S(\eta )^*}{\eta }, L_u \right ]f=f-\frac{1}{2\pi}\inner f u u\ .$$
\end{lem}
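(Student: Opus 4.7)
My plan is to split $L_u = D - T_u T_{\ov u}$ and to analyze the two resulting commutators $[S(\eta)^*, D]$ and $[S(\eta)^*, T_u T_{\ov u}]$ separately. The $D$-piece will account for the leading term $f$ in the claimed limit, while the Toeplitz piece will produce the rank-one correction $-\tfrac{1}{2\pi}\inner{f}{u}\,u$.

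For the derivative part, I would pass to Fourier variables, where $S(\eta)^*$ acts as the shift $\hat h(\xi)\mapsto \hat h(\xi+\eta)$ while $D$ acts as multiplication by $\xi$. A one-line computation then yields the algebraic identity $[S(\eta)^*, D] = \eta\,S(\eta)^*$, hence $\tfrac{1}{\eta}[S(\eta)^*, D] f = S(\eta)^* f \to f$ in $L^2$ by strong continuity of the Lax--Beurling semigroup at $\eta=0^+$.

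For the Toeplitz part, the first observation I would exploit is that, since $u\in H^1_+(\R)\subset L^\infty(\R)$, $T_u$ is simply multiplication by $u$ on $L^2_+(\R)$ and therefore commutes with the multiplication operator $S(\eta)=e^{\ii\eta x}$. Taking adjoints then gives $[S(\eta)^*, T_{\ov u}]=0$, which collapses the computation to $[S(\eta)^*, T_u T_{\ov u}] = [S(\eta)^*, T_u]\,T_{\ov u}$. Writing $S(\eta)^* g = e^{-\ii\eta x}(g - P_\eta g)$ for $g\in L^2_+(\R)$, where $P_\eta$ denotes the Fourier restriction onto $[0,\eta)$, a short calculation produces
$$[S(\eta)^*, T_u] g = e^{-\ii\eta x}\bigl(u\, P_\eta g - P_\eta(u g)\bigr),$$
and passing to the Fourier side, exploiting the half-line support of $\hat u$ to cancel the contribution on $[0, \eta)$, yields
\be \label{eq:plan_cancel}
\widehat{u P_\eta g - P_\eta(u g)}(\xi) = \mathbf{1}_{\xi \geq \eta}\,\int_0^\eta \hat u(\xi-\zeta)\,\hat g(\zeta)\,\tfrac{d\zeta}{2\pi}.
\ee
Dividing by $\eta$, the pointwise limit for each $\xi > 0$ is $\tfrac{1}{2\pi}\hat u(\xi)\hat g(0^+)$, i.\,e.~$\tfrac{1}{2\pi}\hat g(0^+)\,u$ after inverse Fourier transform.

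Specializing to $g = T_{\ov u} f$, the membership $\ov u\, f \in L^1(\R)$ (which follows from $u,f \in H^1_+ \subset L^2 \cap L^\infty$) implies continuity of $\widehat{\ov u f}$, so $\hat g(0^+) = \widehat{\ov u f}(0) = \inner{f}{u}$. The main obstacle will be upgrading the above pointwise convergence to convergence in $L^2$: naively $\tfrac{1}{\eta}P_\eta g$ has $L^2$-norm of order $\eta^{-1/2}$ and diverges, but the half-line support of $\hat u$ encoded in \eqref{eq:plan_cancel} is precisely what cancels this singular low-frequency contribution. To close the argument I would apply a Cauchy--Schwarz bound on the $\zeta$-integral together with $L^2$-continuity of translations $\zeta\mapsto \hat u(\cdot - \zeta)$ and right-continuity of $\hat g$ at $0$. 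Combining everything then yields $\tfrac{1}{\eta}[S(\eta)^*, L_u] f \to f - \tfrac{1}{2\pi}\inner{f}{u}\,u$ in $L^2$, as required.
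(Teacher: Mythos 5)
Your argument is correct and is essentially the paper's own proof in a slightly more structured form: the paper computes $\widehat{[\eta^{-1}S(\eta)^*,L_u]f}(\xi)=\hat f(\xi+\eta)-\frac{1}{4\pi^2\eta}\int_0^\eta\hat u(\xi+\eta-\zeta)\big[\int_0^\infty\overline{\hat u(\tau)}\hat f(\zeta+\tau)\,d\tau\big]\,d\zeta$ directly on the Fourier side, and this is exactly the sum of your two pieces, since your identity $[S(\eta)^*,D]=\eta S(\eta)^*$ produces the first term and your window formula for $[S(\eta)^*,T_u]T_{\ov u}f$ is precisely the second term with $g=T_{\ov u}f$ (note $\widehat{T_{\ov u}f}(\zeta)=\frac{1}{2\pi}\int_0^\infty\overline{\hat u(\tau)}\hat f(\zeta+\tau)\,d\tau$). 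The cancellation over the frequency window $[0,\eta)$ and the limiting argument ($L^2$-continuity of translations of $\hat u$ together with right-continuity of $\widehat{T_{\ov u}f}$ at $0^+$, giving $\widehat{\ov u f}(0)=\inner{f}{u}$) are the same in both; your writeup merely makes explicit the operator identities $[S(\eta)^*,T_{\ov u}]=0$ and the $L^2$-limit justification that the paper states in one line.
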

 
\begin{proof}[Proof of Lemma \ref{lem:firstbracket}]
Notice that, for $\xi >0$ , 
$$\widehat{L_uf}(\xi )=\xi \hat f(\xi)-\frac{1}{4\pi^2}\int_0^\xi \hat u(\xi-\zeta)\left [\int_0^\infty \overline{\hat u (\tau)}\hat f(\zeta +\tau)\, d\tau\right ]\, d\zeta \ .$$
Set
$$A(\eta ):=\left [\frac{S(\eta )^*}{\eta }, L_u \right ]\ ,\ \xi \ge 0\ .$$
Then
$$\widehat{A(\eta)f}(\xi )=\hat f(\xi +\eta )
-\frac{1}{4\pi ^2\eta}\int_0^\eta \hat u(\xi+\eta -\zeta)\left [\int_0^\infty \overline{\hat u (\tau)}\hat f(\zeta +\tau)\, d\tau\right ]\, d\zeta \ .
$$
Passing to the limit in $L^2$, we get
$$\lim_{\eta \to 0}\widehat{A(\eta)f}(\xi )=\hat f(\xi)-\frac{1}{4\pi^2}\hat u(\xi )\int_0^\infty \overline{\hat u (\tau)}\hat f(\tau)\, d\tau\ ,$$
which yields Lemma \ref{lem:firstbracket}. 
\end{proof}
For future reference, we state the following lemma, which can be proved similarly to Lemma \ref{lem:firstbracket}.
\begin{lem}\label{lem:firstbracketbis}
Let $a, b\in L^2_+(\R )$. For every $f\in L^2_+(\R)$,  we have the following limit in the $L^2$-norm,
$$\lim_{\eta \to 0}\left [\frac{S(\eta )^*}{\eta },  T_aT_{\overline b} \right ]f=\frac{1}{2\pi}\inner f b a\ .$$
\end{lem}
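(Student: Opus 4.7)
The plan is to mirror the Fourier-side computation carried out for Lemma \ref{lem:firstbracket}, with the symbol pair $(u,u)$ replaced by $(a,b)$. Using $\widehat{S(\eta )^*g}(\xi )=\hat g(\xi +\eta )$ for $\xi ,\eta \ge 0$ and proceeding exactly as there, one has
\[
\widehat{T_aT_{\bar b}f}(\xi )=\frac{1}{4\pi ^2}\int _0^\xi \hat a(\xi -\zeta )\,F(\zeta )\, d\zeta ,\qquad F(\zeta ):=\int _0^\infty \overline{\hat b(\tau )}\hat f(\tau +\zeta )\, d\tau ,
\]
for $\xi \ge 0$. Note that $F$ is continuous on $[0,\infty )$ by dominated convergence, and Plancherel (using the paper's convention $\inner f b=\tfrac{1}{2\pi }\int \hat f\,\overline{\hat b}$) gives $F(0)=2\pi \inner f b$.

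The commutator computation is then a direct telescoping: shifting $\xi \mapsto \xi +\eta$ in the display above and changing variables $\zeta \mapsto \zeta +\eta$, the portion of $\widehat{S(\eta )^*T_aT_{\bar b}f}$ corresponding to $\zeta \ge \eta$ cancels exactly with $\widehat{T_aT_{\bar b}S(\eta )^*f}$, leaving
\[
\widehat{[S(\eta )^*,T_aT_{\bar b}]f}(\xi )=\frac{1}{4\pi ^2}\int _0^\eta \hat a(\xi +\eta -\zeta )\,F(\zeta )\, d\zeta .
\]
The right-hand side is a well-defined $L^2(d\xi )$ function by Cauchy--Schwarz and Fubini, even though $T_a$ and $T_{\bar b}$ need not be bounded on $L^2_+$ when $a,b$ are merely in $L^2_+$, so the commutator makes sense as an $L^2$-object regardless of this subtlety.

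Dividing by $\eta$ and splitting $F(\zeta )=F(0)+(F(\zeta )-F(0))$, the $F(0)$-piece becomes $\tfrac{F(0)}{4\pi ^2}\cdot \tfrac{1}{\eta }\int _\xi ^{\xi +\eta }\hat a(s)\, ds$, which converges in $L^2(d\xi )$ to $\tfrac{F(0)}{4\pi ^2}\hat a(\xi )=\tfrac{1}{2\pi }\inner f b\,\hat a(\xi )$ by the $L^p$ Lebesgue differentiation theorem. For the remainder, Cauchy--Schwarz in $\zeta$ followed by Fubini in $(\xi ,\zeta )$ gives
\[
\bigg \| \tfrac{1}{\eta }\int _0^\eta \hat a(\,\cdot \,+\eta -\zeta )(F(\zeta )-F(0))\, d\zeta \bigg \|_{L^2}^2\le \|\hat a\|_{L^2}^2\cdot \tfrac{1}{\eta }\int _0^\eta |F(\zeta )-F(0)|^2\, d\zeta ,
\]
and the factor in $\zeta$ vanishes as $\eta \to 0$ by continuity of $F$ at $0$ (from $|F(\zeta )-F(0)|\le \|\hat b\|_{L^2}\|\hat f(\,\cdot +\zeta )-\hat f\|_{L^2}$). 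Transporting back to physical space via Plancherel yields the claimed identity $\lim _{\eta \to 0}\big [\tfrac{S(\eta )^*}{\eta },T_aT_{\bar b}\big ]f=\tfrac{1}{2\pi }\inner f b\,a$ in $L^2$. The only genuinely analytic ingredient beyond the Fourier bookkeeping is the Lebesgue differentiation step, which is the same one implicit in Lemma \ref{lem:firstbracket}, so I do not expect a new obstacle here.
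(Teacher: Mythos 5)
Your proof is correct and follows exactly the route the paper intends: the paper gives no separate argument for this lemma, only the remark that it "can be proved similarly to Lemma \ref{lem:firstbracket}", and your Fourier-side telescoping plus the split $F(\zeta)=F(0)+(F(\zeta)-F(0))$ is precisely the detailed version of that computation. The only additions are the careful justification of the $L^2$ limit (approximate identity and continuity of translation) and the remark on well-definedness of the commutator for $a,b\in L^2_+$, both of which are correct and fill in steps the paper leaves implicit.
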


Let us come back to the proof Proposition \ref{prop:L_basic}. If we apply Lemma \ref{lem:firstbracket} to $f=\psi $, we obtain
$$\lim_{\eta \to 0}\frac{(\lambda I-L_u)S(\eta )^*\psi }{\eta}=\psi-\frac{1}{2\pi}\inner \psi u u\ .$$
Taking inner product of both sides with $\psi $, we obtain
$$0=\| \psi \|_{L^2}^2-\frac{1}{2\pi}|\inner \psi u |^2\ $$
which is \eqref{identity}. Now the next statements of Proposition \ref{prop:L_basic} follow easily. Indeed,
identity \eqref{identity} implies that the kernel of the linear form $\inner . u$ on any eigenspace is reduced to $\{ 0 \}$, so  that this eigenspace is one dimensional. Finally, if $\psi_1,\dots ,\psi_N$ is an orthonormal system of eigenvectors, we have
$$\| u  \|_{L^2}^2\geq \sum_{j=1}^n |\inner u {\psi_j} |^2=2\pi N\ .$$
This proof of Proposition \ref{prop:L_basic} is now complete.
\end{proof}

%\begin{remark}
%Notice that, in the case $\norm u _{L^2}^2<2\pi $, the spectrum is only continuous --- and probably absolutely continuous, but this has to be proved. This threshold corresponds to the mass of the soliton. Below this threshold, the sharp Gagliardo-Nirenberg inequality on $H^{1/2}_+$ 
%$$\norm u _{L^4}^4\leq \frac 1\pi \inner {Du} u \norm u _{L^2}^2$$
%and the conservation laws
%$$\norm u _{L^2}^2\ ,\ \inner {Du} u\ -\frac 12 \norm u _{L^4}^4\ ,\ \norm {Du-\Pi (|u|^2)u}$$
%imply global existence of the solution. In this case scattering is expected.
%\end{remark}

\subsection{Multi-Soliton Potentials}
Next, we exhibit a class of potentials $u \in H^1_+(\R)$ which are given by rational functions and which optimize the general bound for the number $N$ of eigenvalues of $L_u$ found in Proposition \ref{prop:L_basic} above. To this end, we use $\sigma_{pp}(L_u)$ to denote the  pure point spectrum of $L_u$ and correspondingly we define
$$
\mathcal{E}_{pp}(u) ={\rm span}  \{ \psi \in \mathrm{ker} (L_u - \lambda I) :  \lambda \in \sigma_{pp}(L_u) \}
$$
to be the space spanned by the eigenfunctions of $L_u$. We have the following spectral characterization result.

\begin{prop}\label{prop:spectralNsoliton}
Let $u \in H^1_+(\R)$ be given and let $N \geq 1$ be an integer. Then the following properties are equivalent and preserved by the flow of \eqref{eq:NLS}.
\begin{enumerate}
\item[(i)] The space $\mathcal{E}_{pp}(u)$ has dimension $N$ and is invariant under the adjoint semigroup $\{ S(\eta)^*\}_{\eta \geq 0}$ and it holds that $u \in \mathcal{E}_{pp}(u)$.
\item[(ii)] There exist a polynomial $Q \in \C[x]$ of degree $N$, with all its zeros in $\C_-$, and a polynomial $P \in \C[x]$ of degree at most $N-1$ such that
$$
u(x) = \frac{P(x)}{Q(x)} \quad \mbox{and} \quad P \overline{P} = i (Q' \overline{Q} - \overline{Q}' Q ).
$$
\end{enumerate}
\end{prop}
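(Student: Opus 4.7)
The key observation, which I would establish first, is the equivalence between the algebraic constraint $P\bar{P} = i(Q'\bar{Q} - \bar{Q}'Q)$ and the \emph{spectral} identity $\Pi_+(|u|^2) = iQ'/Q$. Indeed, partial-fractioning $|u|^2 = P\bar P/(Q\bar Q)$ shows that the residue of $\Pi_+(|u|^2)$ at a simple zero $z_k$ of $Q$ equals $P(z_k)\bar P(z_k)/(Q'(z_k)\bar Q(z_k))$, and the constraint evaluated at $x=z_k$ (where $Q(z_k)=0$) forces this residue to be exactly $i$. Conversely, the real polynomial $D := P\bar P - i(Q'\bar Q - \bar Q' Q)$ has degree $\leq 2N-2$ and vanishes at each $z_k$ (and hence at each $\bar z_k$ by reality), totalling $2N$ zeros which force $D \equiv 0$. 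Throughout I assume simple zeros of $Q$; the case of multiplicities is handled by routine adjustments.

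For (ii) $\Rightarrow$ (i), set $V := \{R/Q : \deg R \leq N-1\}$, an $N$-dimensional subspace of $L^2_+(\R)$. A quick Fourier computation gives $S(\eta)^*[1/(x-z_k)] = e^{i\eta z_k}/(x-z_k)$, so $V$ is $S(\eta)^*$-invariant. For $L_u$-invariance, taking $f = R/Q \in V$ and using that $u$ is a bounded rational in $L^2_+$ (so $T_u g = ug$ on $L^2_+$), partial fractions give $T_u T_{\bar u} f = PS/Q^2$ with $S(z_k) = \bar P(z_k) R(z_k)/\bar Q(z_k)$, whence
\[
L_u f = -\frac{iR'}{Q} + \frac{iRQ'-PS}{Q^2},
\]
and the pointwise constraint makes the second numerator vanish at each $z_k$, hence divisible by $Q$. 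Self-adjointness of $L_u$ yields $N$ simple eigenvalues of $L_u|_V$, so $V \subset \mathcal{E}_{pp}(u)$; and the explicit calculation $\|u\|_{L^2}^2 = 2\sum_k |\mathrm{Im}(z_k)|\cdot\pi/|\mathrm{Im}(z_k)| = 2\pi N$ saturates Proposition \ref{prop:L_basic}, forcing $\mathcal{E}_{pp}(u) = V$ and $u \in V$.

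For (i) $\Rightarrow$ (ii), I would invoke the Beurling--Lax theorem in its upper half-plane form: every $N$-dimensional $S(\eta)^*$-invariant subspace of $L^2_+(\R)$ is a model space $K_\Theta$ with $\Theta$ a finite Blaschke product of degree $N$, hence of the form $V = \{R/Q : \deg R \leq N-1\}$ for a polynomial $Q$ of degree $N$ with zeros in $\C_-$. Thus $\mathcal{E}_{pp}(u) = V$ and $u = P/Q$ with $\deg P \leq N-1$. Because $V$ is $L_u$-invariant and the displayed formula for $L_u f$ applies for \emph{every} $R$, choosing $R$ with $R(z_k) \neq 0$ forces the pointwise identity $iQ'(z_k)\bar Q(z_k) = P(z_k)\bar P(z_k)$ for $k = 1, \ldots, N$; by the key observation, this lifts to the polynomial identity (ii).

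For flow invariance, the Lax pair gives isospectrality, so $\dim \mathcal{E}_{pp}(u(t)) = N$; and writing \eqref{eq:NLS} as $\partial_t u = \widetilde{B}_u u$ yields $u(t) = U(t)u_0 \in U(t)\mathcal{E}_{pp}(u_0) = \mathcal{E}_{pp}(u(t))$. The delicate remaining point is preservation of $S(\eta)^*$-invariance; I would handle it through characterization (ii) directly. Substituting the ansatz $u(t,x) = P(t,x)/Q(t,x)$ with polynomial $P, Q$ of $x$-degrees $\leq N-1$ and $N$ respectively into \eqref{eq:NLS} and using the identity $\Pi_+(|u|^2) = iQ'/Q$ collapses the right-hand side from denominator $Q^3$ down to $Q^2$, yielding a finite polynomial ODE for the coefficients of $(P,Q)$; local existence for this ODE combined with uniqueness for the Cauchy problem of \eqref{eq:NLS} shows the solution remains rational of this form, and the constraint is propagated via the established equivalence (i) $\iff$ (ii) together with Lax-invariance of $u \in \mathcal{E}_{pp}(u)$. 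The principal obstacle is making this global in time despite the potential pole collisions acknowledged in the introduction.
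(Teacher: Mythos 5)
Your treatment of the equivalence (i) $\Leftrightarrow$ (ii) is essentially correct and close in spirit to the paper's: your identity $\Pi_+(|u|^2)=iQ'/Q$ is precisely the paper's relation $D\theta/\theta=|u|^2$ with $\theta=\ov{Q}/Q$, and your direct computation $\|u\|_{L^2}^2=2\pi N$ from the partial fractions of $i(Q'/Q-\ov{Q}'/\ov{Q})$ is a clean alternative to the paper's summation of $|\inner{u}{\psi_j}|^2$ over an eigenbasis. Two caveats. Minor: $S(\eta)^*$ multiplies $1/(x-z_k)$ by $\mathrm{e}^{-i z_k\eta}$, not $\mathrm{e}^{i\eta z_k}$ (the latter would grow since $\mathrm{Im}\,z_k<0$, contradicting contractivity). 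More seriously, your residue and zero-counting arguments genuinely require $Q$ to have simple zeros, and this is not a removable genericity assumption: the paper exhibits two-soliton potentials with $Q(x)=(x-z)^2$, and such degeneracies occur dynamically at isolated times. The paper's (i) $\Rightarrow$ (ii) step is multiplicity-blind --- it shows $D\theta/\theta-|u|^2$ is a real-valued element of $L^2_+(\R)$, hence lies in $L^2_+\cap\ov{L^2_+}=\{0\}$ --- and you should replace ``routine adjustments'' with an argument of that kind.

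The genuine gap is the preservation by the flow, and in particular the propagation of the $S(\eta)^*$-invariance of $\mathcal{E}_{pp}(u(t))$, which you correctly identify as the delicate point but do not resolve. Your proposed route --- substitute $u=P(t,x)/Q(t,x)$ into \eqref{eq:NLS}, derive a coefficient ODE, and invoke PDE uniqueness --- is exactly the pole-ansatz strategy that the paper describes in Section \ref{sec:dyn_sol} and explicitly sets aside as insufficient. It has three unresolved problems: (a) the collapse of the right-hand side to denominator $Q^2$ uses the constraint $\Pi_+(|u|^2)=iQ'/Q$ at time $t$, so one must separately prove that the coefficient ODE preserves the constraint (the paper calls the analogous verification for \eqref{eq:cm_constraints} a ``tedious calculation,'' and your appeal to ``Lax-invariance of $u\in\mathcal{E}_{pp}(u)$'' does not supply it, since isospectrality plus $u(t)\in\mathcal{E}_{pp}(u(t))$ says nothing about $S(\eta)^*$-invariance); (b) nothing prevents the zeros of $Q(t)$ from reaching the real axis within the lifespan of the ODE; and (c) pole collisions, i.e.\ $Q(t)$ acquiring multiple zeros, invalidate your simple-zero framework at times where the PDE solution itself is perfectly regular. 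The paper's proof avoids all of this by an operator-level argument: it reformulates $S(\eta)^*$-invariance as the finite condition $u\in\mathrm{dom}(G)$ and $Gu\in\mathcal{E}_{pp}(u)$ (condition (iii), using the commutator identity $[G,L_u]f=if-\tfrac{i}{2\pi}\inner{f}{u}u$ and cyclicity of $u$ for $L_u$ on $\mathcal{E}_{pp}(u)$), and then propagates this via the explicit formula $U(t)^*Gu(t)=\mathrm{e}^{-itL_{u_0}^2}Gu_0+2t\,\mathrm{e}^{-itL_{u_0}^2}L_{u_0}u_0$, which manifestly stays in $\mathcal{E}_{pp}(u_0)$. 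This reformulation is the missing idea; without it (or a complete substitute), the flow-invariance claim is not established.
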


Before we prove this result, let us make some general comments as follows. We refer to the rational functions $u \in H^1_+(\R)$ above as a \textbf{multi-soliton potential} or \textbf{$N$-soliton potential}. In Section \ref{sec:dyn_sol} below, we will study the time evolution of multi-soliton potentials, which correspond to \textbf{multi-solitons}. In view of Proposition \ref{prop:L_basic}, we see that multi-solitons must have a quantized $L^2$-mass according to
$$
\| u \|_{L^2}^2 = 2\pi N
$$
with some integer $N \geq 1$. As a generic example for a multi-soliton potential, we can take the polynomial $Q$ to have simple zeros, i.\,e., 
$$
Q(x)=\prod_{j=1}^N (x-z_j),
$$ 
where $z_1, \ldots, z_N \in \C_-$ are pairwise distinct. Then the condition in (ii) above  yields that 
$$
u(x)=\sum_{j=1}^N\frac{a_j}{x-z_j} \quad \mbox{with} \quad \sum_{j=1}^N \frac{a_j\overline a_k}{z_j-\overline z_k}=\ii \quad \mbox{with $k=1,\dots ,N$} .
$$

Let us also remark that, for a given polynomial $Q \in \C[x]$ of degree $N \geq 1$, there exist only a finite number of polynomials $P \in \C[x]$ satisfying the constraint in (ii) up to a constant complex phase. We can find them as follows. Consider the polynomial $F:=i(Q'\overline Q-\overline Q'Q) \in \C[x]$ of degree $2N-2$. Suppose that 
$$Q(x)=\prod_{j=1}^N (x-z_j)$$
with  zeros $z_1,\dots ,z_N \in \C_-$, which are not necessarily distinct. For $x \in \R$, we get 
$$\frac{F(x)}{|Q(x)|^2}=\sum_{j=1}^N \frac{-2{\rm Im}z_j}{|x-z_j|^2}>0\ .$$
In particular, the zeroes of $F$ come in pairs as $(\alpha_j,\overline \alpha_j)$, $j=1,\dots ,N-1$, with $\alpha _j \notin \R$. Thus we can write 
$$P(x)=c\prod_{j=1}^{N-1} (x-\alpha_j),$$
where the constant $c \in \C$ is adjusted so that $P\overline P=F$ holds. Of course, exchanging one $\alpha_j $ with $\overline \alpha_j$ leads to a different $P$ and hence to a different function $u$.

\begin{proof}[Proof of Proposition \ref{prop:spectralNsoliton}]
We first proof the equivalence of statements (i) and (ii). Finally, we address the preservation by the flow of (CM-DNLS).

\medskip
\textbf{Step 1: (ii) $\Rightarrow$ (i)}. 
For $u(x) = \frac{P(x)}{Q(x)} \in H^1_+(\R)$ as in (ii), we define 
$$\theta (x)=\frac{\overline Q(x)}{Q(x)} \quad \mbox{and} \quad K_\theta := \frac{\C_{N-1}[x]}{Q(x)}\ ,$$
so that we have the orthogonal decomposition
$$L^2_+(\R) = K_\theta \oplus \theta L^2_+(\R) \ .$$
We claim that $K_\theta$ is an invariant subspace of $L_u$. Indeed, if $f=A/Q\in K_\theta $, then
\begin{eqnarray*}
L_u(f)&=&\frac{-iA'}{Q}+\frac{iQ'A}{Q^2}-\frac{P}{Q}\Pi_+\left ( \frac{\overline P}{\overline Q} \frac{A}{Q}  \right )\\
&=&\frac{-iA'}{Q}+\Pi_+\left [\frac{P\overline P}{Q\overline Q}\frac AQ+i\frac{\overline Q'}{\overline Q}\frac AQ\right ]-\frac{P}{Q}\Pi_+\left ( \frac{\overline P}{\overline Q} \frac{A}{Q}  \right )\\
&=&\frac{-iA'}{Q}+i\Pi_+\left (  \frac{\overline Q'}{\overline Q} \frac{A}{Q}\right ) +\Pi_+\left (\frac PQ \Pi_-\left (\frac{\overline PA}{\overline Q Q}\right )  \right )\ .
\end{eqnarray*}
Observe that, if $R\in L^2(\R )$ is  a rational function, and if the denominator of $R$ reads $Q_+Q_-$, where $Q_\pm $ is a polynomial 
with zeroes in $\C_\pm $, then $\Pi_+(R)$  is of the form $P/Q_-$, where $P$ is a polynomial of degree less than the degree of $Q_-$. From this observation and the above identity, we conclude that  $L_u(f)\in K_\theta $. Therefore $L_u$ is a self-adjoint endomorphism on the finite dimensional space $K_\theta $, which has dimension $N$. This implies that $L_u$ has at least $N$ eigenvalues. Since $u \in K_\theta$, it follows that $u$ is a linear combination of an orthonormal basis of eigenfunctions $\psi_1, \ldots, \psi_{N} \in K_\theta$. From Proposition \ref{prop:L_basic} we conclude that
 $$\norm{u}_{L^2}^2= \sum_{k=1}^{N} |\langle u, \psi_k \rangle |^2 = 2\pi N.$$
By invoking Proposition \ref{prop:L_basic} again, we deduce that $L_u$ has exactly $N$ eigenvalues. Thus we have shown $\mathcal{E}_{pp}(u) = K_\theta$ and therefore we conclude $\dim \mathcal{E}_{pp}(u) = N$ as well as $u \in \mathcal{E}_{pp}(u)$. Since the semigroup $\{ S(\eta) \}_{\eta \geq 0}$ leaves the space $\theta L_+^2(\R)$ invariant, we obtain that its adjoint semigroup $\{ S(\eta)^* \}_{\eta \geq 0}$ leaves its orthogonal complement $\mathcal{E}_{pp}(u) = K_\theta$ invariant. 

\medskip
\textbf{Step 2: (i) $\Rightarrow$ (ii)}. Suppose that $\mathcal{E}_{pp}(u)$ has dimension $N \geq 1$ and is invariant under the adjoint semigroup $\{ S(\eta)^* \}_{\eta \geq 0}$. Thus the orthogonal complement $(\mathcal{E}_{pp}(u))^\perp$ is invariant preserved by the action of the semigroup $S(\eta)$ with $\eta \geq 0$. By the Lax--Beurling theorem \cite{Lax-59}, we conclude
$$
(\mathcal{E}_{pp}(u))^\perp = \theta L^2_+(\R)
$$
with some inner function $\theta$ defined on the upper complex halfplane $\C_+$. Furthermore, since $\mathcal{E}_{pp}(u)$ is $N$--dimensional, one can choose $\theta $ of the form
$$\theta (x)=\frac{\overline Q(x)}{Q(x)}\ ,\ Q(x)=\prod_{j=1}^N (x-z_j)\ ,\ \mathrm{Im}z_j<0\ .$$
Consequently,
$$\mathcal{E}_{pp}(u)=(\theta L^2_+(\R))^\perp=K_\theta =\frac{\C_{N-1}[x]}{Q(x)}\ .$$
Since $u\in \mathcal{E}_{pp}(u)$, there exists $P\in \C_{N-1}[x]$ such that $u=P/Q$. Since $L_u$ is self-adjoint, we have
$$L_u[(\mathcal{E}_{pp}(u))^\perp \cap H^1_+(\R )]\subset (\mathcal{E}_{pp}(u))^\perp \ ,$$
alternatively $L_u(\theta H^1_+(\R ))\subset \theta L^2_+(\R )$. Let $h\in H^1_+(\R )$. We have
$$L_u(\theta h)=\theta Dh+(D\theta )h -u\Pi_+(\overline u\theta h)=\theta Dh+(D\theta )h -|u|^2\theta h\ ,$$
because 
$$\overline u\theta =\frac{\overline P}{\overline Q }\frac{\overline Q}{Q}=\frac{\overline P}{Q}\in L^2_+(\R )\ .$$
We infer $(D\theta )h -\theta |u|^2h \in \theta L^2_+(\R )$ for every $h\in H^1_+(\R )$, or
$$\frac{D\theta }{\theta }-|u|^2\in L^2_+(\R )\ .$$
Notice that, for every $x\in \R$, 
$$\frac{D\theta }{\theta }=\sum_{j=1}^N \frac{2\mathrm{Im}z_j}{|x-z_j|^2}$$
hence $D\theta /\theta -|u|^2$ is real valued, therefore it belongs to $L^2_+(\R )\cap \overline {L^2_+(\R )}=\{ 0\}$. Reformulating this identity in terms of $P$ and $Q$, we obtain $P\overline P=i(Q'\overline Q-\overline Q'Q)\ .$

\medskip
\textbf{Step 3: Preservation by the Flow.} 
In order to prove the last part of Proposition \ref{prop:spectralNsoliton}, we introduce the infinitesimal generator of the adjoint Lax--Beurling semi--group, namely the operator $G$  such  that
$$S(\eta )^*={\rm e}^{-i\eta G}\ .$$
Notice that its operator domain is given by
$${\rm dom}(G)=\{ f\in L^2_+(\R) : \hat f _{\vert ]0,+\infty[}\in H^1(]0,+\infty [)\}$$
and that 
$$\widehat{(Gf)}(\xi )=i\frac{d\hat f}{d\xi } \quad \mbox{for}  \quad \xi >0.$$
In particular, the operator $G$  acts on $K_\theta $ for every finite Blaschke product $\theta $. We claim that properties (i) and (ii) are equivalent to the following statement:\\
(iii) The space $\mathcal{E}_{pp}(u)$ has dimension $N$, contains $u$, and moreover $u\in \mathrm{dom}(G) $ with $Gu\in \mathcal{E}_{pp}(u)$.

Indeed, as we just observed, if $u$ is a $N$-soliton, then it satisfies (iii). Conversely, assume that $u\in H^1_+$ satisfies (iii). We appeal to a corollary of Lemma \ref{lem:firstbracket}. 
 \begin{lem}\label{lem:secondbracket}
Let $f\in {\rm dom}(G)\cap H^1_+$ such that $L_uf\in {\rm dom}(G)$. Then $Gf\in H^1_+$ and
$$GL_uf-L_uGf=if-i\frac{\inner fu}{2\pi}u\ .$$
\end{lem}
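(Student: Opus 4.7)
The plan is to derive the identity by passing to the limit in the commutator expression of Lemma~\ref{lem:firstbracket} and then invoking closedness of the self-adjoint operator $L_u$. First I would observe that the adjoint semigroup $\{S(\eta)^*\}_{\eta \geq 0}$ preserves $H^1_+(\R)$, which is transparent on the Fourier side since $\widehat{S(\eta)^*g}(\xi)=\hat g(\xi+\eta)$ for $\xi \ge 0$. Consequently $L_u S(\eta)^* f$ is a genuine element of $L^2_+(\R)$, and using $[I/\eta,L_u]=0$ one can rewrite
\begin{equation*}
\left[\frac{S(\eta)^*}{\eta}, L_u\right] f \;=\; \frac{S(\eta)^*-I}{\eta}(L_u f)\;-\;L_u\!\left(\frac{S(\eta)^*-I}{\eta} f\right).
\end{equation*}

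Next, because $G$ is the generator of the $C_0$-semigroup $S(\eta)^*=\mathrm{e}^{-i\eta G}$, the hypotheses $f, L_u f \in \mathrm{dom}(G)$ supply the strong $L^2$-limits $\frac{S(\eta)^*-I}{\eta}f \to -iGf$ and $\frac{S(\eta)^*-I}{\eta}L_u f \to -iGL_u f$ as $\eta \to 0^+$. Combined with Lemma~\ref{lem:firstbracket}, which yields $\bigl[\tfrac{S(\eta)^*}{\eta},L_u\bigr]f \to f-\tfrac{1}{2\pi}\inner{f}{u}u$, the splitting above then forces
\begin{equation*}
L_u\!\left(\frac{S(\eta)^*-I}{\eta}f\right) \;\longrightarrow\; -iGL_u f \;-\; f \;+\; \frac{1}{2\pi}\inner{f}{u}u \qquad \text{in } L^2_+(\R).
\end{equation*}

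To close the argument, I would appeal to closedness of the self-adjoint operator $L_u$: since $\frac{S(\eta)^*-I}{\eta}f$ converges to $-iGf$ while its $L_u$-image converges to the displayed limit, we conclude simultaneously that $-iGf \in \mathrm{dom}(L_u)=H^1_+(\R)$ (so $Gf \in H^1_+$) and that $L_u(-iGf) = -iGL_u f - f + \tfrac{1}{2\pi}\inner{f}{u}u$. Multiplying by $i$ and rearranging gives precisely $GL_u f - L_u G f = if - \tfrac{i}{2\pi}\inner{f}{u}u$, as desired.

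The argument is essentially bookkeeping: Lemma~\ref{lem:firstbracket}, the defining property of the generator $G$, and closedness of $L_u$ are all that is needed. The one subtle point worth flagging is that a priori neither $L_u G f$ nor the membership $Gf \in H^1_+(\R)$ is available; the virtue of the closedness argument is that both drop out simultaneously from the same limit.
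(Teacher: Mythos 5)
Your proposal is correct and follows exactly the paper's own (much terser) argument: rewrite the commutator of Lemma \ref{lem:firstbracket} with $\frac{S(\eta)^*-I}{\eta}$, use the generator property of $G$ to identify the limits, and invoke closedness of $L_u$ to obtain both $Gf\in H^1_+$ and the commutator identity simultaneously. The sign bookkeeping with $Gh=i\lim_{\eta\to 0^+}\frac{S(\eta)^*h-h}{\eta}$ checks out, so nothing further is needed.
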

\begin{proof}
For every $h\in {\rm dom}(G)$, we have
$$Gh=i\lim_{\eta \to 0^+}\frac{S(\eta)^*h-h}{\eta }\ .$$
Rewriting Lemma \ref{lem:firstbracket} as
$$\lim_{\eta \to 0}\left [\frac{S(\eta )^*-I}{\eta }, L_u \right ]f=f-\frac{1}{2\pi}\inner f u u. $$
Using that $L_u$ is a closed operator, the lemma follows.
\end{proof}
Applying Lemma \ref{lem:secondbracket} to $f=u$, we infer that $L_u(u)\in \mathrm{dom}(G)$ and that
$$GL_u(u)=L_u(Gu)+iu -i\frac{\| u\|^2_{L^2}}{2\pi}u\ .$$
In particular, $GL_u(u)\in \mathcal{E}_{pp}(u)$. Iterating this process, we conclude by an easy induction that, for every integer $k$,  $L_u^k u\in \mathrm{dom}(G)$  and that $GL_u^ku\in \mathcal{E}_{pp}(u)$. Now recall from Proposition \ref{prop:L_basic}  that $L_u$ has $N$ simple eigenvalues on $\mathcal E_{pp}(u)$, and that the component of $u$ on any eigenvector is different from $0$. Consequently, $u$ is a cyclic vector for $L_u$ in $\mathcal E_{pp}(u)$, namely the $N$ vectors $u,L_u,\cdots, L_u^{N-1}u $ form a basis of $\mathcal E_{pp}(u)$. From this we infer that 
$G$ acts on $\mathcal E_{pp}(u)$, and finally that $S(\eta )^* $ acts on $\mathcal E_{pp}(u)$, whence (i).\\
Let us prove that property (iii) is preserved by the flow of (CM-DNLS). Consider $u_0$ satisfying (iii), and denote by $u$ the solution of \eqref{eq:NLS} with $u(0)=u_0$, on its maximal time interval.  Notice that we know from (ii) that $u_0$ belongs to every $H^s$. Therefore, by the well-posedness result  Proposition \ref{prop:lwp_kato}, $u(t)$ belongs to every $H^s$, hence we do not  have to worry about its regularity. We are going to use the Lax equation provided by Lemma \ref{lem:Lax}. Denote by $U(t)$ the one-parameter family of unitary operators on $L^2_+(\R )$ defined as
$$\frac d{dt}U(t)= B_{u(t)}U(t)\ ,\ U(0)=I\ .$$
Then Lemma \ref{lem:Lax}  implies  
\begin{equation}\label{eq:conjugation}
L_{u(t)}=U(t)L_{u_0}U(t)^*\ .
\end{equation}
Consequently, $\mathcal {E}_{pp}(u(t))=U(t) [\mathcal{E}_{pp}(u_0)]$ has dimension $N$. Furthermore, in view of Lemma \ref{lem:hierarchy}, 
the spectral measure of $L_{u(t)}$ associated to the vector $u(t)$ is the same as the spectral measure of $L_{u_0}$ associated to the vector $u_0$. This implies that $\mathcal {E}_{pp}(u(t))$ contains $u(t)$. It remains to prove that $u(t)\in \mathrm{dom}(G)$ and that $Gu(t)\in \mathcal {E}_{pp}(u(t))$. 
Let us appeal to the reformulation of the dynamics as 
$$\partial_tu=\tilde B_uu\ ,$$
where $\tilde B_u=B_u-iL_u^2$ according to \eqref{def:B_tilde}. Given $\eta >0$, define
$$v(t,\eta ):=i\frac{S(\eta)^*u(t)-u(t)}{\eta }$$
and observe that 
$$\partial_tv(t,\eta )=\tilde B_uv(t,\eta )+g(t,\eta )\ ,\ g(t,\eta ):=i\left [\frac{S(\eta )^*}{\eta}, \tilde B_{u(t)} \right ]u(t)\ ,$$
which, in view of \eqref{eq:conjugation},  can be solved as 
$$U(t)^*v(t,\eta )={\rm e}^{-itL_{u_0}^2}v(0,\eta )+\int_0^t {\rm e}^{i(\tau -t)L_{u_0}^2}U(\tau)^*g(\tau,\eta )\, d\tau\ .$$
Using Lemma \ref{lem:firstbracketbis} and the expression of $B_u$, we  obtain
\begin{lem}\label{lem:thirdbracket}
If $u\in H^1_+(\R )$, we have, for every $f\in L^2_+(\R )$,
$$\lim_{\eta \to 0}i\left [\frac{S(\eta )^*}{\eta }, B_u \right ]f =\frac{1}{2\pi}(\inner f {L_uu}u +\inner fu L_uu)\ .$$
\end{lem}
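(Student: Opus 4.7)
My plan is to split $B_u=T_u T_{\overline{\pt_x u}} - T_{\pt_x u} T_{\bar u} + i(T_uT_{\bar u})^2$ into its three building blocks and apply Lemma \ref{lem:firstbracketbis} to each. (I use that $\pt_x\bar u=\overline{\pt_x u}$.) The two quadratic pieces are already of the form $T_a T_{\bar b}$ with $a,b\in L^2_+(\R)$ whenever $u\in H^1_+(\R)$, so Lemma \ref{lem:firstbracketbis} applied directly gives, in $L^2(\R)$ as $\eta\to 0$,
\begin{align*}
\Bigl[\tfrac{S(\eta)^*}{\eta},\,T_u T_{\overline{\pt_x u}}\Bigr]f &\ \longrightarrow\ \tfrac{1}{2\pi}\,\inner f{\pt_x u}\,u, \\
\Bigl[\tfrac{S(\eta)^*}{\eta},\,T_{\pt_x u}T_{\bar u}\Bigr]f &\ \longrightarrow\ \tfrac{1}{2\pi}\,\inner fu\,\pt_x u.
\end{align*}

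For the cubic term I would use the commutator Leibniz rule $[A,BC]=[A,B]C+B[A,C]$ with $A=S(\eta)^*/\eta$ and $B=C=T_uT_{\bar u}$. Since $u\in H^1_+(\R)\hookrightarrow L^\infty(\R)$, the operator $T_u T_{\bar u}$ is bounded and self-adjoint on $L^2_+(\R)$, so the strong $L^2$-convergence provided by Lemma \ref{lem:firstbracketbis} is preserved under composition with $T_u T_{\bar u}$ from either side. Using self-adjointness of $T_uT_{\bar u}$ to rewrite $\inner{T_uT_{\bar u}f}{u}=\inner f{T_uT_{\bar u}u}$, I obtain
$$\Bigl[\tfrac{S(\eta)^*}{\eta},\,(T_uT_{\bar u})^2\Bigr]f\ \longrightarrow\ \tfrac{1}{2\pi}\bigl(\inner f{T_u T_{\bar u}u}\,u \,+\, \inner fu\,T_u T_{\bar u}u\bigr).$$

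The final step is to assemble the three limits, multiply by $i$, and rewrite $\pt_x u$ in terms of $L_u u$ via the identity $\pt_x u=iDu=i(L_u u+T_u T_{\bar u} u)$, which follows from $\pt_x=iD$ together with $D=L_u+T_uT_{\bar u}$. A short algebraic manipulation (using antilinearity of the second slot of $\inner\cdot\cdot$ for the term $\inner f{\pt_x u}u$) shows that the four contributions involving $T_u T_{\bar u}u$ cancel pairwise, and what remains is exactly
$$\tfrac{1}{2\pi}\bigl(\inner f{L_uu}\,u+\inner fu\,L_uu\bigr),$$
as claimed. The only substantive point is to justify the termwise passage to the limit for the cubic piece: this reduces to the boundedness of $T_u T_{\bar u}$ on $L^2_+(\R)$ and the $L^2$-strong convergence already verified in Lemma \ref{lem:firstbracket}, so no new estimate is required beyond what is established there.
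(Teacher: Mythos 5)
Your proof is correct and follows exactly the route the paper intends: the paper derives this lemma by applying Lemma \ref{lem:firstbracketbis} termwise to the expression for $B_u$, and your decomposition of the quartic term via the commutator Leibniz rule together with the boundedness and self-adjointness of $T_uT_{\bar u}$ supplies precisely the missing details; the algebra with $\pt_x u = iDu$ and $D = L_u + T_uT_{\bar u}$ does produce the claimed cancellation. The only blemish is the final citation, which should point to Lemma \ref{lem:firstbracketbis} rather than Lemma \ref{lem:firstbracket}.
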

Combining Lemma \ref{lem:thirdbracket} with Lemma \ref{lem:firstbracket}, we infer, locally uniformly in $t$, 
$$\lim_{\eta \to 0}g(t,\eta )=2 L_{u(t)}u(t)\ .$$
This shows that $v(t, \eta )$ has a limit $Gu(t)$ in $L^2_+$ as $\eta \to 0$, characterized by
\begin{eqnarray*}
U(t)^*Gu(t)&=&{\rm e}^{-itL_{u_0}^2}Gu_0+2\int_0^t {\rm e}^{i(\tau -t)L_{u_0}^2}L_{u_0} U(\tau)^*u(\tau )\, d\tau\\
&=&{\rm e}^{-itL_{u_0}^2}Gu_0+2t\, {\rm e}^{-itL_{u_0}^2}L_{u_0}u_0\ .
\end{eqnarray*}
Notice that, by (iii),  the right hand side of the above equation belongs to $\mathcal{E}_{pp}(u_0)$. Consequently, $Gu(t)\in U(t)[\mathcal{E}_{pp}(u_0)]=\mathcal{E}_{pp}(u(t))$. This completes the proof.
 \end{proof}

\begin{remark*}
In fact, one can  easily check that the operator $L_u$ restricted to the invariant subspace $\theta L^2_+=(K_\theta)^\perp$ has absolutely continuous simple spectrum with
$$ L_u(\theta h)=\theta Dh \quad \mbox{for all $h \in H^1_+(\R)$.} $$
\end{remark*}

For any $u \in H^1_+(\R)$, we notice that the operator $L_u$ has the essential spectrum $\sigma_{ess}(L_u) = [0, \infty)$. In the case of multi-solitons, we find that $0$ is always an embedded eigenvalue.

\begin{prop}\label{prop:kernel}
For any multi-soliton potential $u \in H^1_+(\R)$, we have that 
$$L_u(1-\theta )=0,$$
where $\theta(x) = \frac{\overline{Q}(x)}{Q(x)}$ with the notation from Proposition \ref{prop:spectralNsoliton} (ii) above.
\end{prop}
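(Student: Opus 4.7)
The strategy is a direct computation, leveraging the defining identity \(P\overline P=i(Q'\overline Q-\overline Q'Q)\) from Proposition \ref{prop:spectralNsoliton}(ii). First I would verify that \(1-\theta=(Q-\overline Q)/Q\) actually belongs to \(H^1_+(\R)\): since both \(Q\) and \(\overline Q\) are monic of degree \(N\), the numerator \(Q-\overline Q\) has degree at most \(N-1\), so \(1-\theta\in K_\theta=\mathcal E_{pp}(u)\subset \mathrm{dom}(L_u)\). Thus the statement amounts to claiming that \(0\) is an eigenvalue of \(L_u\) whose eigenfunction already sits inside the finite-dimensional subspace identified in the previous proposition.

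Next, I would compute the two contributions to \(L_u(1-\theta)=D(1-\theta)-\Pi_+\!\bigl(u\,\Pi_+(\overline u(1-\theta))\bigr)\) separately. Differentiating yields
\[
D(1-\theta)=i\,\partial_x\theta=i\,\frac{\overline Q'\,Q-\overline Q\,Q'}{Q^2},
\]
and the constraint \(P\overline P=i(Q'\overline Q-\overline Q'Q)\) rewrites this immediately as \(-P\overline P/Q^2\).

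For the Toeplitz term I would decompose \(\overline u(1-\theta)=\overline P/\overline Q-\overline P/Q\). The first piece is antiholomorphic on the upper half-plane (all zeros of \(\overline Q\) lie in \(\C_+\)), hence annihilated by \(\Pi_+\); the second piece lies in \(L^2_+(\R)\) since all poles of \(\overline P/Q\) lie in \(\C_-\) and \(\deg\overline P<\deg Q\), and is therefore fixed by \(\Pi_+\). Hence \(\Pi_+(\overline u(1-\theta))=-\overline P/Q\), and multiplication by \(u=P/Q\) produces \(-P\overline P/Q^2\), a rational function with poles only in \(\C_-\) decaying like \(|x|^{-2}\) and already an element of \(L^2_+(\R)\). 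The outer \(\Pi_+\) is then the identity, so \(T_uT_{\overline u}(1-\theta)=-P\overline P/Q^2=D(1-\theta)\), and the two terms cancel to give \(L_u(1-\theta)=0\).

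There is no real technical obstacle here: the proof is a short chain of rational-function manipulations. The one substantive ingredient is the characterizing constraint \(P\overline P=i(Q'\overline Q-\overline Q'Q)\), which is precisely what forces the derivative contribution to agree with the Toeplitz contribution. The only points requiring some care are the Hardy/anti-Hardy decomposition used to evaluate \(\Pi_+(\overline u(1-\theta))\) and bookkeeping of which rational factors have their poles in \(\C_-\) versus \(\C_+\).
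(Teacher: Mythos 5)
Your proof is correct and follows essentially the same route as the paper: differentiate $\theta=\overline Q/Q$, use the constraint $P\overline P=i(Q'\overline Q-\overline Q'Q)$, and observe that $\Pi_+$ kills $\overline P/\overline Q$ while fixing $\overline P/Q$, so the two contributions cancel. Your version is in fact slightly more careful than the paper's in checking that the outer $\Pi_+$ in $T_uT_{\overline u}$ acts as the identity on $-P\overline P/Q^2$ and that $1-\theta$ lies in $K_\theta\subset\mathrm{dom}(L_u)$.
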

\begin{proof} We observe that
$$L_u(1-\theta )=-D\theta -u\Pi_+(\overline u(1-\theta))=-D\theta +u\Pi_+(\overline u\theta)\ .$$
Notice that the function
$$\overline u\theta =\frac{\overline P}{\overline Q}\frac{\overline Q}{Q}=\frac{\overline P}{Q}$$
belongs to $L^2_+(\R)$. Hence we can deduce
$$L_u(1-\theta )=-D\theta +u\overline u\theta =0,$$
because of the constraint in Proposition \ref{prop:spectralNsoliton} (ii).
\end{proof}

\section{Dynamics of Multi-Solitons}

This section is devoted to the study of multi-solitons, i.\,e., solutions $u(t,x)$ with initial datum giving by a multi-soliton potential $u_0 \in H^1_+(\R)$ (see Proposition \ref{prop:spectralNsoliton} above). By means of an inverse spectral formula, we will be able to prove global-in-time existence for all multi-solitons. This is a large data result which is beyond the scope of a-priori bounds. Second, we prove that all multi-solitons with $N \geq 2$ exhibit an energy cascade (growth of Sobolev norms) as $t \to \pm \infty$. 

\label{sec:dyn_sol}

\subsection{Preliminary Discussion} Let us first consider the following {\em pole ansatz} of the form
\be \label{eq:u_ansatz_sol}
u(t,x) = \sum_{j=1}^N \frac{a_j(t)}{x-z_j(t)} \in H^1_+(\R),
\ee   
where $a_1(t), \ldots, a_N(t) \in \C \setminus \{ 0 \}$ and pairwise distinct poles $z_1(t), \ldots, z_N(t)$ in the complex lower halfplane $\C_-$. If we plug this ansatz into (CM-DNLS), then a straightforward calculation shows that the self-consistency of \eqref{eq:u_ansatz_sol} leads to the set of nonlinear constraints given by
\be \label{eq:cm_constraints}
\sum_{j=1}^N \frac{a_j(t) \ov{a}_k(t)}{z_j(t) - \ov{z}_k(t)} = i \quad \mbox{for} \quad k=1, \ldots, N.
\ee
Note that these conditions have already appeared in the discussion of multi-soliton potentials (see Proposition \ref{prop:spectralNsoliton} above). Furthermore,  the equations of motions which govern the parameters $\{ a_j(t), z_j(t) \}_{j=1}^N$ are found to be
\be \label{eq:cm_one}
\dot{a}_k = 2 i \sum_{\ell \neq k}^N \frac{a_\ell - a_k}{(z_k-z_\ell)^2} \quad \mbox{and} \quad a_k \dot{z}_k = -2 i \sum_{\ell \neq k}^N \frac{a_\ell}{z_k - z_\ell}
\ee
with $k=1, \ldots, N$. A tedious calculation shows that the constraints \eqref{eq:cm_constraints} are indeed preserved by the time evolution determined by \eqref{eq:cm_one}. Finally, we remark that the first-order system \eqref{eq:cm_one} can be used to derive that
\be \label{eq:cm_two}
\ddot{z}_k = \sum_{\ell \neq k}^N \frac{8}{(z_k - z_\ell)^3} \quad \mbox{for} \quad k=1, \ldots, N,
\ee
which is again confirmed by a lengthy calculation that we omit here. We note that \eqref{eq:cm_two} can be regarded as a complexified version\footnote{That is, we formally generalize the positions $x_j \in \R$ to complex numbers $z_j \in \C_-$.} of the \textbf{Calogero--Moser (CM) system} on the real line, whose complete integrability was proven by J.~Moser in \cite{Mo-75} (see also \cite{OlPe-76, KaKoSt-78, OlPe-81}). Let us mention that the pole dynamics of rational solutions of completely integrable PDEs are governed by (complexified versions) of CM systems have also been found for the Benjamin-Ono, KdV and Half-Wave Maps equations in \cite{AiMcMo-77,Ca-79, BeKlLa-20}. 

However, we emphasize that working with the pole ansatz in \eqref{eq:u_ansatz_sol} leads to the following {\em caveats} that ultimately need to be addressed.

\begin{enumerate}
\item {\em Collision of poles:} It may happen that $z_j(t) \to z_k(t)$ for some pair $j \neq k$ as $t \to T$ with some finite time $T> 0$. From \eqref{eq:cm_two} we expect that the solution $z_k(t)$ blows up in $C^2$ as $t \to T$. But the solution $u(t,x)$ itself may stay smooth as $t \to T$, whereas the pole ansatz \eqref{eq:u_ansatz_sol} becomes invalid only. Explicit examples of pole collisions will be given in Subsection \ref{subsec:two_sol} below.

\item {\em Showing that $z_k(t) \in \C_-$:} The major step in showing global-in-existence for multi-solitons consists in proving that the poles $z_k(t)$ stay in the lower complex half-plane $\C_-$.   
\end{enumerate}

To systematically tackle the problems (1) and (2), we will derive an {\em inverse spectral formula} for multi-solitons, which entails the pole ansatz \eqref{eq:u_ansatz_sol} as a special case. Furthermore, the dynamical evolution of multi-solitons $u(t,x)$ will be encoded by the {\em linear flow} of a suitable matrix $\Ms(t) \in \C^{N \times N}$ such that
\be
\Ms(t) = 2 \Vs t + \Ws
\ee
with some constant matrices $\Vs$ and $\Ws$ in $\C^{N \times N}$; see Proposition \ref{prop:solution} and \eqref{eq:Ms}--\eqref{eq:VsWs} below. Moreover, we remark that solving classical CM systems on the real line by means of linear flows of $N \times N$-matrices was successfully used in \cite{OlPe-76}. Finally, notice that similar inverse formulae for multi-soliton solutions were derived in the case of the Benjamin--Ono equation in \cite{Su-21} and in the case of the cubic Szeg\H{o} equation in \cite{GePu-22}. However, in these two examples, the global well-posedness result was established directly by other methods, while it seems to be  the first time that such inverse formulae provide global existence.

\subsection{Inverse Spectral Formula and Time Evolution}

Let 
$$
u(x) = \frac{P(x)}{Q(x)} \in H^1_+(\R)
$$ 
be a multi-soliton potential. We use the notation introduced in the proof of Proposition \ref{prop:spectralNsoliton} with
$$
\theta(x) = \frac{\ov{Q}(x)}{Q(x)} \quad \mbox{and} \quad K_\theta = \frac{\C_{N-1}[x]}{Q(x)}.
$$
Recall that the Lax--Beurling semigroup $S(\eta )$ leaves the space $\theta L^2_+$ invariant, and hence its adjoint semigroup $S(\eta )^*$ leaves the subspace $K_\theta $ invariant.  This observation leads to the following formula, where $G$ is the operator introduced in subsection 5.2.
\begin{prop}\label{inverse}
For every $f\in K_\theta $, 
$$f(x)=\frac{1}{2\pi i}\inner {(G-xI)^{-1}f}{1-\theta} \quad \mbox{for} \quad {\rm Im}(x)>0\ .$$
\end{prop}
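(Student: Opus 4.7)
The plan is to derive an explicit formula for the resolvent $(G-xI)^{-1}$ on $L^2_+(\R)$ for $\mathrm{Im}\, x > 0$ and then evaluate the pairing against $1-\theta$ by residues, exploiting the rational structure of $K_\theta$.

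First, recall that $G$ acts on the Fourier side as $\widehat{Gf}(\xi) = i\hat f'(\xi)$ for $\xi > 0$ (equivalently $Gf = \Pi_+(yf)$ in physical space, which follows by differentiating $S(\eta)^*f = \Pi_+(\mathrm{e}^{-i\eta y}f)$ at $\eta=0$). Solving the resolvent ODE $i\hat g'(\xi) - x\hat g(\xi) = \hat f(\xi)$ on $(0,\infty)$ for $\hat g \in L^2(0,\infty)$ and $\mathrm{Im}\, x > 0$ selects the decaying branch
\[
\hat g(\xi) = i\int_\xi^\infty \mathrm{e}^{ix(\tau-\xi)}\hat f(\tau)\,d\tau,
\]
and inverting the Fourier transform (using Fubini to swap the $\xi$ and $\tau$ integrals) yields the clean physical-space formula
\[
(G-xI)^{-1}f(y) = \frac{f(y)-f(x)}{y-x}, \qquad f \in L^2_+(\R),\ \mathrm{Im}\,x > 0.
\]
The pole at $y = x \in \C_+$ is removable because $f$ extends holomorphically to $\C_+$, so the right-hand side lies in $L^2_+(\R)$.

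Next, compute the pairing by splitting
\[
\langle (G-xI)^{-1}f,\,1-\theta\rangle \;=\; \int_\R \frac{f(y)(1-\overline{\theta(y)})}{y-x}\,dy \;-\; f(x)\int_\R \frac{1-\overline{\theta(y)}}{y-x}\,dy.
\]
For $f = A/Q \in K_\theta$ with $\deg A < N = \deg Q$ and $Q$ having all roots in $\C_-$, the conjugate satisfies $\overline{\theta(y)} = Q(y)/\bar Q(y)$ for $y \in \R$, which extends to a rational function whose only poles are the roots $\bar z_j$ of $\bar Q$, all lying in $\C_+$. In particular, both $f\overline{\theta} = A/\bar Q$ and $1-\overline{\theta} = (\bar Q - Q)/\bar Q$ are holomorphic on $\C_-$, and every integrand above decays like $O(|y|^{-2})$ at infinity, so contours may be closed without boundary contributions.

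The residue calculus is then immediate. The integral $\int_\R f(y)/(y-x)\,dy$ closes in $\C_+$ to capture the simple pole at $y = x$ with residue $f(x)$, producing $2\pi i\, f(x)$ (this is just the Cauchy reproducing formula for $f \in L^2_+$). Both $\int_\R f\overline{\theta}/(y-x)\,dy$ and $\int_\R(1-\overline{\theta})/(y-x)\,dy$ have all their poles in $\C_+$ and can therefore be closed in $\C_-$, each yielding $0$. Summing the three contributions gives $\langle (G-xI)^{-1}f, 1-\theta\rangle = 2\pi i\, f(x)$, which is the claimed identity. The only step requiring genuine care is the resolvent formula of the first step; everything else is elementary residue calculus, driven by the structural dichotomy that elements of $K_\theta$ have all poles in $\C_-$ while $\overline{\theta}$ has all poles in $\C_+$.
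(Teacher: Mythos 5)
Your argument is correct, but it runs along a genuinely different track from the paper's. The paper never computes the resolvent explicitly: it first proves the identity $\hat f(\xi)=\inner{S(\xi)^*f}{1-\theta}$ for $f\in K_\theta$ (by regularizing the constant $1$ as $(1-i\eps x)^{-1}$ and using that $\theta L^2_+\perp K_\theta$), plugs this into the Fourier inversion formula, and then recognizes $\int_0^\infty \mathrm{e}^{ix\xi}S(\xi)^*\,d\xi=-i(G-xI)^{-1}$ for $\mathrm{Im}\,x>0$ as the Laplace transform of the adjoint Lax--Beurling semigroup. You instead solve the resolvent ODE on the Fourier side to get the difference-quotient formula $(G-xI)^{-1}f=\frac{f(\cdot)-f(x)}{\cdot-x}$ and then evaluate the pairing by closing contours, using that elements of $K_\theta$ have all poles in $\C_-$ while $\ov{\theta}=Q/\ov{Q}$ has all poles in $\C_+$. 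I checked the sign conventions, the residue bookkeeping, and the decay rates; everything is consistent (e.g.\ for $N=1$ both sides give $1/(x+i)$). The trade-off: the paper's route is structural and does not exploit rationality beyond the invariance of $K_\theta$ under $S(\eta)^*$, so it would survive in more general model spaces; your route yields the attractive and reusable explicit resolvent formula and is more self-contained, but the contour-closing step is hard-wired to finite Blaschke products. One small caveat: you state the resolvent formula for all $f\in L^2_+(\R)$, but your Fubini justification (swapping the $\xi$ and $\tau$ integrals) really needs $\hat f\in L^1(0,\infty)$ or a limiting argument; for $f\in K_\theta$ this is automatic since $\hat f$ is a finite combination of decaying exponentials (times polynomials), which is all you use, so this is a presentational quibble rather than a gap.
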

\begin{proof}
We start from the inversion Fourier formula for every element of $L^2_+(\R)$,
$$f(x)=\frac{1}{2\pi}\int_0^\infty \mathrm{e}^{ix\xi }\hat f(\xi )\, d\xi \ .$$
We claim that if $f\in K_\theta $ we have
\be\label{Fouriertheta}
\hat f(\xi )=\inner {S(\xi)^*f}{1-\theta}\ ,\ \xi >0\ .
\ee
Indeed, by the Plancherel theorem,
$$\hat f(\xi)=\lim_{\eps \to 0^+}\int_\R {\rm e}^{-ix\xi} \frac{f(x)}{1+i\eps x}\, dx=\lim_{\eps \to 0^+}\bbinner {S(\xi )^*f} {\frac{1}{1-i\eps x}}\ ,$$
since $(1-i\eps x)^{-1}\in L^2_+$. For the same reason $\theta (1-i\eps x)^{-1}\in \theta L^2_+$ which is orthogonal to $K_\theta $ and in particular to $S(\xi)^*f$. Therefore
$$ \hat f(\xi)=\lim_{\eps \to 0^+}\bbinner {S(\xi )^*f} {\frac{1-\theta}{1-i\eps x}}\ ,$$
which yields \eqref{Fouriertheta}. It remains to plug \eqref{Fouriertheta} into the inversion Fourier formula, reminding that $S(\xi )^*={\rm e}^{-i\xi G}$, and the proposition follows.
\end{proof}

Next, we are going to use Proposition \ref{inverse} in the particular case $f=u$. The interesting feature is that the inner product in this formula takes place in $K_\theta$, of which we can choose an orthonormal basis made of eigenvectors $(\psi_1, \ldots, \psi_N)$ of $L_u$ so that
$$L_u\psi_j=\lambda_j\psi_j \quad \mbox{for} \quad j=1,\dots ,N.$$
Here and throughout the following we label the eigenvalues such that 
$$\lambda_1=0, \lambda_2, \ldots, \lambda_N,$$ 
where we recall that $0$ is always an eigenvalue of $L_u$ by Proposition \ref{prop:kernel}. In view of Proposition \ref{prop:L_basic},  we can choose the normalisation
$$\inner u {\psi_j} =\sqrt{2\pi} \quad \mbox{for} \quad j=1,\dots, N.$$
Note that, because of Proposition \ref{prop:kernel}, it holds 
$$\inner {1-\theta} {\psi_j} =0 \quad \mbox{for} \quad  j=2,\dots ,N .$$

 Let us first discuss the case of $\inner {1-\theta} {\psi_1}$. Notice that every element $f$ of 
 $K_\theta $ is smooth on $]0,+\infty[$ with limits at $0^+$. In particular, we can pass to the limit in formula \eqref{Fouriertheta} as $\xi \to 0^+$ to obtain
 $$\hat f(0^+)=\inner f{1-\theta}\ .$$
 On the other hand, the identity $L_u \psi_1=0$ reads
 $$\xi \hat \psi_1(\xi )=\frac{1}{4\pi^2}\int_0^\xi \hat u(\xi-\zeta)\left (\int_0^\infty \overline{\hat u(\tau)}\hat \psi_1(\tau +\zeta)\, d\tau\right)\, d\zeta $$
 so that
 \be \label{hatpsi_00}
 \inner {\psi_1} {1-\theta}=\hat \psi_1(0^+)=\frac{\hat u(0^+)}{2\pi}\inner u{\psi_1}=\frac{\hat u(0^+)}{\sqrt{2\pi}}\ .
 \ee
 Coming back to the equation satisfied by $u$,
 \be \label{SzDNLS}
 i\partial_tu+\partial_x^2u+2D_+(|u|^2)u=0\ ,
 \ee
 and taking the Fourier transform, we observe that
 $$i\partial_t\hat u(t, \xi )-\xi^2\hat u(t,\xi )+2\int_0^{\infty}{\rm e}^{-ix\xi}D_+(|u|^2)(t,x)u(t,x)\, dx=0\ .$$
 Passing to the limit as $\xi \to 0^+$ and observing that $D_+(|u|^2)$ and $u$ both belong to $L^2_+$, we infer
 $$\partial_t\hat u(t,0^+)=0.$$
Hence $\hat u(0^+)$ is a conserved quantity and, consequently, the inner product product $\inner  {1-\theta} {\psi_1}$ is conserved as well. Let us come back to the inverse formula
 \be \label{inverseu}
 u(x)=\frac{1}{2i\pi}\inner {(G-xI)^{-1}u}{1-\theta}\ ,\ {\rm Im}(x)>0\ .
 \ee
 In the orthonormal basis $(\psi_1,\dots, \psi_{N})$ of $K_\theta $, we have just observed that the components of $u$ and of $1-\theta $ are conserved quantities. Next, let us discuss the matrix of $G$ in this basis, which we denote as
 $$\Ms:=(\inner {G\psi_k}{\psi_j})_{1 \leq j,k\leq N}\ .$$
 Of course, the matrix $\Ms=\Ms(t)$ will depend on $t$ as well through the evolution of the eigenfunctions $\psi_j$ as given by the Lax structure in order to keep $\inner u {\psi_j} $ constant,  i.\,e., we have $\dot{\psi}_j = \tilde B_u \psi_j$. Indeed, we will later use this to derive explicitly formulas for $\Ms$. For notational ease, we will sometimes omit the $t$-dependence of $\Ms(t)$.

 We first consider the case $j\ne k$. For this We recall that every element of $K_\theta $ belongs to ${\rm dom}(G)$. Then, for $j\ne k$, we observe that
$$(\lambda_k-\lambda_j)\inner {G\psi_k}{\psi_j}=\inner {(GL_u-L_uG)\psi_k}{\psi_j}=i\inner {\psi_k}{\psi_j}-i\frac{\inner {\psi_k}u \inner u {\psi_j}}{2\pi}=-i\ $$
so that
\be \label{offdiag}
\inner {G\psi_k}{\psi_j}=\frac{i}{\lambda_j-\lambda_k} \quad \mbox{if} \quad j \neq k.
\ee 
Finally, let us discuss the diagonal elements $\inner {G\psi_j}{\psi_j}$. Notice that their imaginary parts are easy to calculate, since
$${\rm Im} \, \inner {G\psi_j}{\psi_j}=\frac{1}{2\pi}{\rm Re} \, \bbinner {\frac{d\hat \psi_j}{d\xi}}{\hat \psi_j}=-\frac{|\hat\psi_j(0^+)|^2}{4\pi}=-\frac{|\inner {\psi_j}{1-\theta}|^2}{4\pi}$$
which is $0$ whenever $j \neq 1$. For $j=1$, we use  \eqref{hatpsi_00} to conclude 
$${\rm Im}\inner {G\psi_1}{\psi_1}=-\frac{|\hat u(0^+)|^2}{8\pi^2}\ .$$
As for the real part of the diagonal elements, we are going to compute their time derivatives if $u$ is a solution of \eqref{SzDNLS}. From the Lax pair formula, we may assume that 
$$\dot \psi_j=B_u\psi_j\ ,\ B_u=T_uT_{\partial_x\overline u}-T_{\partial_xu}T_{\overline u}+i(T_uT_{\overline u})^2\ .$$
Then
$$\frac{d}{dt}\inner {G\psi_j}{\psi_j}=\inner {[G,\tilde B_u]\psi_j}{\psi_j}=\inner {[G,B_u]\psi_j}{\psi_j}\ .$$
Passing to the limit in   Lemma \ref{lem:thirdbracket}, we have
$$[G,B_u]f=\frac{1}{2\pi}(\inner f{L_uu}u +\inner fu L_uu\ ).$$
Consequently, we get
\begin{eqnarray*}
\inner {[G,B_u]\psi_j}{\psi_j}
&=&\frac{1}{2\pi}\inner{L_u\psi_j}{u}\inner{u}{\psi_j}+\frac{1}{2\pi}\inner{\psi_j}{u}\inner{L_uu}{\psi_j}\\
&=&2\lambda_j\ .
\end{eqnarray*}
Summing up, we have proved that
\be \label{evoldiag}
\frac{d}{dt}\inner {G\psi_j}{\psi_j}=2\lambda_j\ .
\ee
The inverse spectral formula therefore reads, setting
$$\gamma_j:={\rm Re}(\Ms_{jj})\ ,\ \sqrt{2\rho}\, \mathrm{e}^{i\varphi }:=\frac{\hat u(0^+)}{2\pi i}\ ,\ \rho >0 .$$

The discussion above shows that the following results holds. 
\begin{prop}\label{prop:solution}
If $u(t) \in H^1_+(\R)$ is a multi-soliton potential such that $L_u$ has eigenvalues $\lambda_1=0,\lambda_2,\dots ,\lambda_{N}$, then $u$ can be recovered as
$$u(t,x)=\sqrt{2\rho}\, {\rm e}^{i\varphi }\inner{(\Ms(t)-xI)^{-1}X}{Y}_{\C^N} \quad \mbox{for} \quad {\rm Im} \, x \geq 0,$$
where
\begin{eqnarray*}
&&X:=  (1,\dots ,1)^T \ ,\ Y:= (1,0,\dots ,0)^T  \ ,\\
&& \Ms_{jk} =\frac{i}{\lambda_j-\lambda_k} \ \ (1\leq j\ne k\leq N), \quad \Ms_{jj}= \gamma_j-i\rho \delta_{j1} \ \ (j=1,\dots,N).
\end{eqnarray*}
Furthermore, the following evolution laws hold:
$$\frac{d}{dt}\varphi =0\ ,\frac{d}{dt}\rho =0\ ,\ \frac{d}{dt}\gamma_j=2\lambda_j, \quad (j=1,\dots ,N).$$
\end{prop}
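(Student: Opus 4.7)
The plan is to assemble the ingredients already gathered in the preceding discussion and to translate the abstract inverse formula of Proposition \ref{inverse}, applied to $f=u$, into coordinates on $\C^N$ relative to the orthonormal eigenbasis $(\psi_1,\dots,\psi_N)$ of $L_u$ on $K_\theta$. The starting point is
$$u(x)=\frac{1}{2\pi i}\inner{(G-xI)^{-1}u}{1-\theta},\qquad \mathrm{Im}(x)>0.$$

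First I would fix the coordinates of $u$ and of $1-\theta$ in this basis. The normalization $\inner{u}{\psi_j}=\sqrt{2\pi}$ for every $j$ gives the coordinate vector $\sqrt{2\pi}\,(1,\dots,1)^T$ for $u$. For $1-\theta$, the key input is Proposition \ref{prop:kernel}: since $L_u(1-\theta)=0$ and $\lambda_1=0$ is simple by Proposition \ref{prop:L_basic}, the vector $1-\theta$ must lie along $\psi_1$, and its only nonzero coordinate is $\inner{1-\theta}{\psi_1}=\overline{\hat u(0^+)}/\sqrt{2\pi}$, in view of \eqref{hatpsi_00}. Substituting these coordinates into the inverse formula and using the entries $\Ms_{jk}(t)=\inner{G\psi_k(t)}{\psi_j(t)}$ collapses the double sum to
$$u(t,x)=\frac{\hat u(0^+)}{2\pi i}\sum_{k=1}^N\bigl[(\Ms(t)-xI)^{-1}\bigr]_{1k}=\sqrt{2\rho}\,e^{i\varphi}\inner{(\Ms(t)-xI)^{-1}X}{Y}_{\C^N}$$
with $X$ and $Y$ as in the statement. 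The eigenfunctions evolve by $\dot\psi_j=B_{u(t)}\psi_j$, which is unitary and preserves these coordinate identities for every $t$.

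It remains to identify $\Ms(t)$ and verify the evolution laws. The off-diagonal entries $\Ms_{jk}=i/(\lambda_j-\lambda_k)$ are recorded in \eqref{offdiag} as a consequence of Lemma \ref{lem:secondbracket}. For the diagonal entries, I would compute $\mathrm{Im}\,\Ms_{jj}=-|\hat\psi_j(0^+)|^2/(4\pi)$ from the Fourier representation $\widehat{Gf}=i\,d\hat f/d\xi$ via integration by parts; this vanishes for $j\geq 2$ because $\inner{1-\theta}{\psi_j}=0$, and for $j=1$ equals $-|\hat u(0^+)|^2/(8\pi^2)=-\rho$ using \eqref{hatpsi_00}. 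Writing $\gamma_j:=\mathrm{Re}(\Ms_{jj})$ produces the stated form $\Ms_{jj}=\gamma_j-i\rho\,\delta_{j1}$. Finally, $\hat u(0^+)$ is conserved (as shown by passing to $\xi\to 0^+$ in the Fourier transform of \eqref{SzDNLS}), so $\dot\rho=\dot\varphi=0$, while $\dot\gamma_j=2\lambda_j$ is precisely \eqref{evoldiag}.

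The only real obstacle I anticipate is the careful bookkeeping of complex conjugations when passing from inner products on $L^2_+$ to matrix-vector expressions on $\C^N$; the exact choice of $X=(1,\dots,1)^T$ and $Y=(1,0,\dots,0)^T$ is dictated unambiguously by the convention $\inner{f}{g}=\int f\bar g$ together with the identification $(f)_j:=\inner{f}{\psi_j}$.
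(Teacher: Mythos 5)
Your proposal is correct and follows essentially the same route as the paper: the paper's ``proof'' of Proposition \ref{prop:solution} is precisely the preceding discussion, namely Proposition \ref{inverse} applied to $f=u$, the identification of the coordinates of $u$ and $1-\theta$ via the normalization $\inner{u}{\psi_j}=\sqrt{2\pi}$, Proposition \ref{prop:kernel} and \eqref{hatpsi_00}, the matrix elements \eqref{offdiag}, the computation of $\mathrm{Im}\,\Ms_{jj}$, the conservation of $\hat u(0^+)$, and the evolution law \eqref{evoldiag}. Your bookkeeping of the conjugations (in particular $\inner{1-\theta}{\psi_1}=\overline{\hat u(0^+)}/\sqrt{2\pi}$ and the resulting prefactor $\hat u(0^+)/(2\pi i)=\sqrt{2\rho}\,e^{i\varphi}$) checks out.
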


\subsection{Global-in-Time Existence}
In order to prove that $N$-soliton solutions $u(t,x)$ can be extended to all times $t \in \R$, we are going to study the eigenvalues of the matrix $\Ms(t) \in \C^{N \times N}$ introduced above. To this end, we note that the time evolution of $\Ms(t)$ stated in Proposition \ref{prop:solution} can be written as
\be \label{eq:Ms}
\Ms(t) = 2 \Vs t + \Ws
\ee
with the constant complex $N \times N$-matrices $\Vs=(\Vs_{jk})_{1 \leq j,k \leq N}$ and $\Ws=(\Ws_{jk})_{1 \leq j,k \leq N}$ having the entries:
\be \label{eq:VsWs}
\Vs_{jk} = \lambda_j \delta_{jk}, \quad \Ws_{jk} = \begin{dcases*}  \gamma_j - i \rho \delta_{j1} & if $j=k$, \\ \frac{i}{\lambda_j-\lambda_k} & if $j \neq k$. \end{dcases*} 
\ee
Recall  that $\lambda_1=0, \lambda_2, \ldots, \lambda_N \in \R$ are real and pairwise distinct and $\rho > 0$ is a strictly positive real number, whereas $\gamma_1, \ldots, \gamma_N \in \R$ are real numbers (which are not necessarily pairwise distinct).

To prove that multi-soliton solutions extend to all times $t \in \R$, we show that all the eigenvalues of $\Ms(t)$ are always in the lower complex plane $\C_- = \{ z \in \C : \mathrm{Im} \, z < 0 \}$.

\begin{lem} \label{lem:M_lower}
For any $t \in \R$, all eigenvalues of the matrix $\Ms(t)$ have strictly negative imaginary parts, i.\,e., it holds
$$
\sigma(\Ms(t)) \subset \C_{-} \quad \mbox{for $t \in \R$}.
$$
%Furthermore, for each $t \in \R$ the spectrum $\sigma(\Ms(t))$ stays uniformly away from the real line as $\gamma_1,\dots, \gamma_{N}$ vary in a bounded subset of $\R^{N-1}$
\end{lem}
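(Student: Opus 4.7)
The plan is a two-stage argument: first show $\sigma(\Ms(t)) \subset \overline{\C_-}$ via a rank-one Hermitian-part computation, then upgrade to the strict inclusion by excluding real eigenvalues.

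In the first stage, I would compute directly from \eqref{eq:Ms}--\eqref{eq:VsWs} that the off-diagonal contributions to $\Ms(t) - \Ms(t)^*$ cancel pairwise, since $i/(\lambda_j - \lambda_k) - \overline{i/(\lambda_k - \lambda_j)} = 0$, leaving only the $(1,1)$-diagonal term coming from $-i\rho$. Hence
\[
\Ms(t) - \Ms(t)^* = -2i\rho\, e_1 e_1^*,
\]
and for any eigenpair $\Ms(t) v = \mu v$ with $v \neq 0$ this yields
\[
\mathrm{Im}(\mu)\|v\|^2 = \tfrac{1}{2i}\, v^*(\Ms(t) - \Ms(t)^*) v = -\rho|v_1|^2 \leq 0.
\]

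To upgrade to strict negativity, suppose towards contradiction that $\mu \in \R$ is an eigenvalue of $\Ms(t_0)$ with eigenvector $v \neq 0$. Then $v_1 = 0$, and since the Hermitian matrix $\Ms_0(t) := \Ms(t) + i\rho e_1 e_1^* = 2t\Vs + W_0$ (where $W_0$ has entries $\gamma_j$ on the diagonal and $i/(\lambda_j - \lambda_k)$ off-diagonal) satisfies $e_1 e_1^* v = 0$, we also have $\Ms_0(t_0) v = \mu v$. Reading off the first row, using $\lambda_1 = 0$, forces $\sum_{k=2}^N v_k/\lambda_k = 0$, i.e., the vector $v' := (v_2,\ldots,v_N)^T$ is orthogonal to $\eta := (1/\lambda_2,\ldots,1/\lambda_N)^T$, while the remaining rows give $\Ms_0'(t_0) v' = \mu v'$, where $\Ms_0'(t) := 2t\Vs' + W_0'$ is the Hermitian principal submatrix obtained by deleting the first row and column. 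Iterating via Hermiticity of $\Ms_0'$,
\[
\langle v', (\Ms_0'(t_0))^k \eta\rangle = \langle (\Ms_0'(t_0))^k v', \eta\rangle = \mu^k \langle v', \eta\rangle = 0 \quad \text{for all } k \geq 0,
\]
so $v'$ is orthogonal to the entire Krylov subspace generated by $\eta$. Cyclicity of $\eta$ for $\Ms_0'(t_0)$ would then force $v' = 0$, the desired contradiction.

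The main technical obstacle is therefore to establish cyclicity of $\eta$ for $\Ms_0'(t)$ at every $t \in \R$. I plan to study the polynomial $D(t) := \det[\eta \mid \Ms_0'(t)\eta \mid \cdots \mid (\Ms_0'(t))^{N-2}\eta]$; factoring $\prod_{j=2}^N \lambda_j^{-1}$ from the rows of its leading-order (in $t$) term reduces that leading coefficient to a nonzero Vandermonde determinant $\prod_{2\leq j < k \leq N}(\lambda_k - \lambda_j)$, so $D \not\equiv 0$. To rule out \emph{real} zeros of $D$ I would decompose $D = \mathrm{Re}\,D + i\,\mathrm{Im}\,D$ as polynomials in $t$ and exploit the inherited rank-one commutation $[W_0',\Vs'] = i(I - X'(X')^T)$ coming from $[\Ws,\Vs] = i(I - XX^T)$ to show that $\mathrm{Im}\,D$ is nowhere vanishing on $\R$; this is immediate for $N \leq 3$ (where $\mathrm{Im}\,D$ is a nonzero constant in $t$) and should follow from a careful, similar analysis in general. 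A conceptually cleaner but logically circular alternative would be to identify $\Ms(t)$ with the matrix representation of the operator $G$ on the $G$-invariant subspace $\mathcal{E}_{pp}(u(t))$: since the point spectrum of $G$ on $L^2_+(\R)$ equals $\C_-$, with eigenfunctions $1/(x-z)$ for $z \in \C_-$, the lemma would be immediate --- but this shortcut is unavailable here, because the $G$-invariance of $\mathcal{E}_{pp}(u(t))$ presupposes the global existence of $u(t)$ that Lemma~\ref{lem:M_lower} is being invoked to establish.
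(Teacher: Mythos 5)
Your first stage coincides with the paper's: the rank--one identity $\Ms(t)-\Ms(t)^*=-2i\rho\,e_1e_1^*$ forces $\mathrm{Im}\,\mu\le 0$, and a putative real eigenvalue gives $v_1=0$ and then $\sum_{k\ge 2}v_k/\lambda_k=0$. The gap is in how you exclude real eigenvalues from there. You reduce the claim to the cyclicity of $\eta=(1/\lambda_2,\dots,1/\lambda_N)^T$ for the Hermitian submatrix $\Ms_0'(t)$ at \emph{every} $t\in\R$, but observe that this reduction is actually an equivalence: if $\eta$ fails to be cyclic for $\Ms_0'(t_0)$, the orthogonal complement of the Krylov subspace is a nontrivial $\Ms_0'$-invariant subspace, hence contains an eigenvector $v'$ of $\Ms_0'(t_0)$ with $v'\perp\eta$, and padding $v'$ with a zero first component yields a genuine real eigenvector of $\Ms(t_0)$ (the first row of $\Ms v=\mu v$ is precisely $\langle v',\eta\rangle=0$). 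So the ``main technical obstacle'' you isolate carries the entire content of the strict inequality, and your plan for it --- showing that $\mathrm{Im}\,D(t)$ never vanishes on $\R$ --- is carried out only for $N\le 3$ and otherwise deferred to ``a careful, similar analysis.'' The Vandermonde computation gives only $D\not\equiv 0$, i.e.\ cyclicity for all but finitely many $t$, which does not suffice since the lemma must hold at every time. As written, the proof is incomplete for $N\ge 4$.

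The paper closes exactly this point with a single (non-iterated) application of the commutator identity $[\Ws,\Vs]=iI-i\langle\cdot,X\rangle X$. Setting $w=(0,v_2/\lambda_2,\dots,v_N/\lambda_N)^T$, so that $\Vs w=v$ and $\langle w,X\rangle=0$, the identity gives $\Vs(\Ms-zI)w=\pm i\,w$. On the other hand, since $\Vs=\Vs^*$, $z\in\R$, and $\Ms^*v=\Ms v=zv$ (valid because $v_1=0$ kills the anti-Hermitian rank-one part), one computes $\langle w,\Vs(\Ms-zI)w\rangle=\langle(\Ms^*-zI)v,w\rangle=0$. Hence $\|w\|^2=0$, so $v=0$, a contradiction. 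If you wish to salvage your Krylov formulation, this same commutator identity is the tool you would need in order to control $\mathrm{Im}\,D$ for general $N$; but the one-shot argument above is shorter and bypasses the cyclicity question entirely.
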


\begin{remark*}
Below we will prove the remarkable fact that all the eigenvalues of $\Ms(t)$ except for one will asymptotically converge to the real axis as $t \to \pm \infty$. As a consequence, this implies that all $N$-solitons with $N \geq 2$ will have an algebraic growth of their Sobolev norms according to $\| u(t) \|_{H^s} \sim |t|^{2s}$ for any $s>0$.
\end{remark*}

\begin{proof}
Let $t \in \R$ be given. For notational convenience we write $\Ms=\Ms(t)$ in what follows. We readily verify the identities
\be \label{eq:M_master}
\frac{(\Ms-\Ms^*)_{jk}}{2 i} = -2 \rho \delta_{1j} \delta_{1k}, \quad [\Ms, \Vs] = [\Ws, \Vs] =  i I - i \langle \cdot, X \rangle X,
\ee
where $\langle \cdot, \cdot \rangle_{\C^N}$ denotes the standard inner product on $\C^N$ and $X = (1, \ldots, 1)^T \in \C^N$. Because of the first identity above, any eigenvalue $z \in \sigma(\Ms)$ must satisfy 
$$
\mathrm{Im} \, z \leq 0.
$$ 
Let us prove that $z \not \in \R$ holds. We argue by contradiction as follows. Suppose that $z \in \R$ and let $v = (v_1, \ldots, v_{N})^T \in \C^N \setminus \{0 \}$ be a corresponding eigenvector, i.\,e.,
$$
\Ms v = z v.
$$
Since we assume that $z$ is real, we can use the first identity in \eqref{eq:M_master} to deduce that
$$
0 = \frac{1}{2i} \langle (\Ms-\Ms^*) v,v \rangle _{\C^N}= -2 \rho |v_1|^2
$$
and therefore $v_1=0$. Projecting the equation $\Ms v = z v$ onto the 1st mode and recalling that $\lambda_1=0$ by assumption, we infer
$$
\sum_{j=2}^{N} \frac{v_j}{\lambda_j} = 0.
$$
This can be written as
$$
\langle w, X \rangle = 0 \quad \mbox{with} \quad w := \left ( 0, \frac{v_2}{\lambda_2}, \ldots, \frac{v_{N}}{\lambda_{N}} \right )^T.
$$
Noticing that $\Vs w = v$ and applying the second identity in \eqref{eq:M_master}, we find
\be \label{eq:M_master2}
\Vs(\Ms-z I) w = i ( w - \langle w, X \rangle _{\C^N}X) = i w.
\ee
If we take inner product of the left side with $w$ and using that $\Vs=\Vs^*$, we conclude
\begin{align*}
\langle w, \Vs (\Ms- z I) w \rangle _{\C^N}& = \langle \Vs w, \Ms w \rangle _{\C^N}-z \langle w, \Vs w \rangle _{\C^N}= \langle \Ms^* v, w \rangle _{\C^N}- z \langle w, v \rangle _{\C^N}\\
& = z  \langle v, w \rangle_{\C^N} - z \langle w, v \rangle_{\C^N} = z \sum_{j=2}^{N} \frac{|v_j|^2}{\lambda_j} - z \sum_{j=2}^{N} \frac{|v_j|^2}{\lambda_j} = 0,
\end{align*}
where we used that $z \in \R$ and that $\Ms^* v = \Ms v = z v$ holds thanks to the form of $\Ms$ and $v_1=0$. Thus from \eqref{eq:M_master2} we deduce that $0 = i |w|^2$. This shows that $w=0$ and consequently we find $v=0$, which is a contradiction. Therefore $z \in \R$ cannot be an eigenvalue of $\Ms$.
\end{proof}

As a direct consequence of Proposition \ref{prop:solution} and Lemma \ref{lem:M_lower}, we deduce the following result.

\begin{thm}[Global-in-Time Existence] \label{thm:gwp_N_soliton}
Suppose $u_0 \in H^1_+(\R)$ is an $N$-soliton potential with some $N \geq 1$. Then the corresponding solution $u(t)$ of \eqref{eq:NLS} with $u(0)=u_0$ extends to all times $t \in \R$.
\end{thm}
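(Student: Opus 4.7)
The plan is to promote the inverse spectral formula of Proposition~\ref{prop:solution}, together with the spectral confinement established in Lemma~\ref{lem:M_lower}, into a genuine global-in-time extension of the multi-soliton. The idea is that the right-hand side of the inverse formula is defined \emph{for all} $t \in \R$ through the affine law $\Ms(t) = 2\Vs t + \Ws$, and the only thing that could conceivably go wrong --- namely $x \mapsto (\Ms(t)-xI)^{-1}$ acquiring a pole on the real axis --- is precisely ruled out by Lemma~\ref{lem:M_lower}.

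First I would apply the local well-posedness Proposition~\ref{prop:lwp_kato} to the initial datum $u_0$ (which, being rational with poles in $\C_-$, belongs to every $H^s_+(\R)$) to obtain a unique solution $u \in C((T_-, T_+); H^s_+(\R))$ on a maximal time interval $(T_-,T_+) \ni 0$ for every $s > \sfrac{3}{2}$. By the preservation statement at the end of Proposition~\ref{prop:spectralNsoliton}, $u(t)$ remains an $N$-soliton potential throughout $(T_-, T_+)$. Hence Proposition~\ref{prop:solution} applies at each such $t$ and yields
\[
u(t,x) = \sqrt{2\rho}\,\mathrm{e}^{i\varphi}\inner{(\Ms(t)-xI)^{-1}X}{Y}_{\C^N},
\]
with $\rho>0$, $\varphi$, and the spectrum $\{\lambda_j\}$ conserved, and with $\gamma_j(t) = \gamma_j(0) + 2\lambda_j t$.

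Second, I would \emph{decouple} the formula from the lifespan by defining, for every $t \in \R$ and $\mathrm{Im}\,x \geq 0$,
\[
v(t,x) := \sqrt{2\rho}\,\mathrm{e}^{i\varphi}\inner{(\Ms(t)-xI)^{-1}X}{Y}_{\C^N}.
\]
By Lemma~\ref{lem:M_lower}, the spectrum of $\Ms(t)$ lies in $\C_-$ for \emph{every} $t \in \R$, so $x \mapsto v(t,x)$ is rational with poles confined to $\C_-$; hence $v(t,\cdot) \in H^s_+(\R)$ for every $s \geq 0$. Since $t \mapsto \Ms(t)$ is affine, its eigenvalues depend continuously on $t$ and $\mathrm{dist}(\sigma(\Ms(t)), \R)$ stays bounded away from $0$ uniformly on any compact $K \subset \R$. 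From the explicit rational expression (poles and residues varying continuously with $t$) one obtains at once
\[
\sup_{t \in K} \|v(t)\|_{H^s} < \infty \qquad \text{for all compact } K \subset \R \text{ and all } s \geq 0.
\]

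Third, on the lifespan $(T_-,T_+)$ Proposition~\ref{prop:solution} gives $u(t) = v(t)$. If $T_+ < +\infty$, the blow-up alternative accompanying Proposition~\ref{prop:lwp_kato} would force $\|u(t)\|_{H^s} \to \infty$ as $t \to T_+^-$ for, say, $s=2$, contradicting the uniform bound on $\|v(t)\|_{H^s}$ over the compact set $[0, T_+]$. Therefore $T_+ = +\infty$, and the identical argument on negative times gives $T_- = -\infty$. In this strategy the only substantial step is Lemma~\ref{lem:M_lower}, which is already established; everything else is a continuity/rationality bookkeeping argument. The mildest technical point I anticipate is verifying continuous dependence $t \mapsto \|v(t)\|_{H^s}$ near times where two eigenvalues of $\Ms(t)$ may coalesce, but this is harmless because the sum $Y^T(\Ms(t)-xI)^{-1}X$ is a rational function of the entries of $\Ms(t)$ and its denominator $\det(xI - \Ms(t))$ varies continuously with $t$ away from the real $x$-axis.
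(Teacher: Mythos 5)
Your proposal is correct and follows essentially the same route as the paper, which derives Theorem \ref{thm:gwp_N_soliton} precisely ``as a direct consequence of Proposition \ref{prop:solution} and Lemma \ref{lem:M_lower}''; you have simply made explicit the bookkeeping (local well-posedness on a maximal interval, preservation of the $N$-soliton property from Proposition \ref{prop:spectralNsoliton}, the globally defined affine matrix flow $\Ms(t)=2\Vs t+\Ws$ with spectrum confined to $\C_-$, and the blow-up alternative) that the paper leaves implicit.
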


\begin{remark} The previous study shows that, to every $N$--soliton $u$, one can associate $$\Lambda (u):=(\varphi , \rho, \lambda_2,\dots ,\lambda_N,\gamma_1,\dots ,\gamma_N)\in \T \times (0,+\infty )\times \R^{2N-1}$$
with the condition that $\lambda_1=0,\lambda_2,\dots, \lambda_N$ are pairwise distinct, so that 
$$\sqrt{2\rho}\mathrm{e}^{i\varphi} =\frac{\hat u(0^+)}{2i\pi}$$ and $\lambda_1=0,\lambda_2, \dots,\lambda_N$ are the eigenvalues of $L_u$, while 
$\gamma_j=\inner {G\psi_j}{\psi_j}-i\rho \delta_{j1}$ if $L_u\psi_j=\lambda_j\psi_j, \Vert \psi_j\Vert_{L^2}=1$. \\
We claim that the spectral mapping $u\mapsto \Lambda (u)$ is surjective. Indeed, assume we are given $(\varphi , \rho, \lambda_2,\dots ,\lambda_N,\gamma_1,\dots ,\gamma_N)\in \T \times (0,+\infty )\times \R^{2N-1}$
with the condition that $\lambda_1=0,\lambda_2,\dots, \lambda_N$ are pairwise distinct, and consider the associated matrix $\Ms$. According to Lemma \ref{lem:M_lower}, we already 
know that  the eigenvalues of $\Ms$ belong to the lower half plane, so that we may define $u$ as
$$u(x)=\sqrt{2\rho}\, {\rm e}^{i\varphi }\inner{(\Ms-xI)^{-1}X}{Y}_{\C^N} \quad \mbox{for} \quad {\rm Im} \, x \geq 0.$$
We claim that $u$ is a $N$--soliton with $Q(x)=\det(xI-\Ms)$. Indeed, this is equivalent to the identity
\be\label{eq:Nsol}
|u(x)|^2=\frac{1}{i}\left (\frac{\ov Q'(x)}{\ov Q(x)}-\frac{Q'(x)}{Q(x)}\right )\ ,\ x\in \R\ .
\ee
Using the expressions of $u(x)$ and $Q(x)$, \eqref{eq:Nsol} is equivalent to
$$\vert \inner {Z(x)}X _{\C^N}\vert ^2=\Vert Z(x)\Vert ^2$$
where $Z(x):=(xI-\Ms ^*)^{-1}Y\ .$ The latter identity can be proved as follows. From $(xI-\Ms^*)Z(x)=Y$, we infer
$\Vs (xI-\Ms^*)Z(x)=0\ .$
Taking the imaginary part of the inner product of both sides with $Z(x)$, we obtain
\begin{eqnarray*}
0&=&\inner {(\Vs (xI-\Ms^*)-(xI-\Ms)\Vs) Z(x)}{Z(x)}_{\C^N}\\
&=&\inner {[\Vs ,(xI-\Ms^*)]Z(x)}{Z(x)}_{\C^N}=i(\vert \inner {Z(x)}{X}_{\C^N}\vert ^2-\Vert Z(x)\Vert^2)\ .
\end{eqnarray*}
Furthermore, from Proposition \ref{prop:solution}, the value of $\hat u(0^+)$ can be obtained by identifying the  coefficient of $1/x$ in the expansion of $u(x)$ as $x\to \infty $. In order to complete the proof of the surjectivity, we  just have to check the following identities, 
$$L_u\psi_j=\lambda_j\psi_j\ ,\ \inner {G\psi_j}{\psi_j}=(\gamma_j -i\rho \delta-{j1})\Vert \psi_j\Vert_{L^2}^2\ ,$$ 
if we define $\psi_j$ as $\psi_j(x)=\inner{(\Ms -xI)^{-1}Y_j}Y _{\C^N}\ ,$
where $Y_j$ denotes the column with $1$ on the line $j$ and $0$ on the other lines. This can be done by direct calculations. For instance, since $\Vs Y_j=\lambda_jY_j$ and $\Vs Y=0$, 
\begin{eqnarray*}
\lambda_j\psi_j(x)&=&\inner {(\Ms -xI)^{-1}\Vs Y_j}Y_{\C^N}=\inner {(\Ms -xI)^{-1}[\Vs ,\Ms](\Ms -xI)^{-1}Y_j}Y_{\C^N}\\
&=&D\psi_j(x)+i\inner{(\Ms-xI)^{-1}Y_j}{X}_{\C^N}\inner{(\Ms-xI)^{-1}X}{Y}_{\C^N}
\end{eqnarray*}
while 
\begin{eqnarray*}
&&\inner{ (\ov \Ms -xI)^{-1}X }{Y}_{\C^N}\inner{ (\Ms -xI)^{-1}Y_j }{Y}_{\C^N}=\inner{(\Ms ^*-xI)^{-1}YY^T(\Ms -xI)^{-1}Y_j}{X}_{\C^N}\\
&&=(2i\rho)^{-1}[\inner{(\Ms-xI)^{-1}X}{Y}_{\C^N}-\inner{(\Ms^*-xI)^{-1}Y_j}{X}_{\C^N}], 
\end{eqnarray*}
so that $i\inner{(\Ms-xI)^{-1}Y_j}{X}_{\C^N}\inner{(\Ms-xI)^{-1}X}{Y}_{\C^N}=-u(x)\Pi_+(\ov u\psi_j)(x)$. 
\end{remark}

Below  we will show that all multi-soliton solutions with $N \geq 2$ exhibit growth of Sobolev norms such that $(\phi , \rho, \lambda_2,\dots ,\lambda_N,\gamma_1,\dots ,\gamma_N)\in \T \times (0,+\infty )\times \R^{2N-1}$ is given so that 
$$
\|u(t) \|_{H^s} \sim |t|^{2s} \quad \mbox{as $t \to \pm \infty$ for any $s > 0$}.
$$
This demonstrates that global-in-time existence for such $u(t,x)$ cannot be inferred from a-priori bounds in $H^1$. Furthermore, it shows that the infinite hierarchy of conversation laws $\{ I_k(u) \}_{k=0}^\infty$ generally fail  to produce a-priori bounds on solutions of (CM-DNLS) in the large data regime with $\|u(0) \|_{L^2}^2 > 2 \pi$. 

\subsection{Explicit Example: Two-Soliton Solutions}

\label{subsec:two_sol}

Before we study the case of $N$-solitons with arbitrary $N \geq 1$, it is instructive to first consider the case $N=2$ in detail. In this case, the constraints in Proposition \ref{prop:spectralNsoliton} 
can be solved explicitly. Suppose that
\be
u(x) = \frac{P(x)}{Q(x)} \in H^1_+(\R)
\ee
is a two-soliton potential, i.\,e.,~$Q \in \C[x]$ is a polynomial of degree 2 with zeros in $\C_-$ and $P\in \C[x]$ is a polynomial of degree at most 1 satisfying the condition stated in Proposition \ref{prop:spectralNsoliton} above.

We start with generic case by assuming that $Q(x)$ has two different zeros $z_1 \neq z_2$ in $\C_-$. Then condition (ii) in Proposition \ref{prop:spectralNsoliton} implies that the polynomial $P(x)=\mbox{const}$ is constant and we find that
$$
u(x) = \frac{a_1}{x-x_1} + \frac{a_2}{x-z_2}, \quad \sum_{j=1}^2 \frac{a_j\overline a_k}{z_j-\overline z_k}=\ii \quad \mbox{with $k=1,2$} .
$$
The constraints on $a_1, a_2 \in \C$ with given $z_1, z_2 \in \C_-$ can be solved explicitly as follows.  Writing for convenience $y_j=-{\rm Im}z_j>0$, we find 
$$\frac{|a_1|^2}{2y_1}-i\frac{a_2\overline a_1}{z_2-\overline z_1}=1\ ,\ -i\frac{a_1\overline a_2}{z_1-\overline z_2}+\frac{|a_2|^2}{2y_2}=1\ .$$
Let us set 
$$\xi :=\frac{|a_1|^2}{2y_1}=\frac{|a_2|^2}{2y_2}\ ,\ \eta :=-i\frac{a_2\overline a_1}{z_2-\overline z_1}=-i\frac{a_1\overline a_2}{z_1-\overline z_2}=1-\xi \ .$$
Then 
$$a_j=\sqrt{2\xi y_j}\mathrm{e}^{i\theta_j}$$
and 
$$1-\xi =\eta =-2i\sqrt{y_1y_2}\, \xi \, \frac{\mathrm{e}^{i(\theta_1-\theta_2)}}{z_1-\overline z_2}\ .$$
Now we discuss the two solutions, according to $\xi <1$ or $\xi >1$. If $\xi <1$,
$$1-\xi =\xi \frac{2\sqrt{y_1y_2} }{|z_1-\overline z_2|}\ ,$$
hence
$$\xi =\frac{1}{1+\frac{2\sqrt{y_1y_2} }{|z_1-\overline z_2|}}.$$
Thus we get
$$
a_1 = i\left ( \frac{2y_1}{1+2\frac{\sqrt{y_1y_2}}{|z_1-\overline z_2|} }\right )^{\frac 12}\frac{z_1-\overline z_2}{|z_1-\overline z_2|} \mathrm{e}^{i\theta }\ , \quad a_2 = \left ( \frac{2y_2}{1+2\frac{\sqrt{y_1y_2}}{|z_1-\overline z_2|} }\right )^{\frac 12}{\rm e}^{i\theta }\ .
$$
 If $\xi >1$, 
 $$\xi -1=\xi \frac{2\sqrt{y_1y_2} }{|z_1-\overline z_2|}\ ,$$
 hence
 $$\xi =\frac{1}{1-\frac{2\sqrt{y_1y_2} }{|z_1-\overline z_2|}}.$$
Therefore,
 $$
a_1 = -i\left ( \frac{2y_1}{1-2\frac{\sqrt{y_1y_2}}{|z_1-\overline z_2|} }\right )^{\frac 12}\frac{z_1-\overline z_2}{|z_1-\overline z_2|}\mathrm{e}^{i\theta }\ , \quad a_2 = \left ( \frac{2y_2}{1-2\frac{\sqrt{y_1y_2}}{|z_1-\overline z_2|} }\right )^{\frac 12}\mathrm{e}^{i\theta }\ .
$$
 Let us compute the remaining eigenvalue $\lambda$ of $L_u$ in both cases. It is enough which is the trace of $L_u$ on $\mathcal{E}_{\rm pp}=K_\theta$ in view of Proposition \ref{prop:kernel}. If $\{j,k\}=\{ 1,2\}$, one checks that
 $$L_u\left ( \frac{1}{x-z_k}  \right )=-i\frac{a_j}{a_k(z_k-z_j)}\frac{1}{x-z_k}+i\frac{a_j}{a_k(z_k-z_j)}\frac{1}{x-z_j}\ .$$
 This implies
 $$\lambda =-i\frac{a_2}{a_1(z_1-z_2)}-i\frac{a_1}{a_2(z_2-z_1)}.$$
After some calculations we obtain
 $$\lambda =\frac{-(y_1+y_2)}{\sqrt{y_1y_2}|z_1-\overline z_2|}<0 \quad \mbox{and} \quad \lambda =\frac{(y_1+y_2)}{\sqrt{y_1y_2}|z_1-\overline z_2|}>0$$
 in the first and in the second case, respectively.

Finally, we consider the non-generic case when $z_1=z_2=z$ and $Q(x)=(x-z)^2$. 
 Then
 $$F(x):=i(Q'(x)\overline Q(x)-Q(x)\overline Q'(x))=4y(x- z)(x-\overline z).$$
 Writing $y=-\mathrm{Im} \, z>0$ again, we have two possibilities:
 $$P(x)=\sqrt{4y}(x-z)\ ,\ u(x)=\frac{\sqrt{4y}}{x-z}$$
 and
 $$P(x)=\sqrt{4y}(x-\overline z)\ ,\ u(x)=\frac{\sqrt{4y}(x-\overline z)}{(x-z)^2}\ .$$
 Notice that the first case should not be confused with the case $N=1$, where
 $$Q(x)=x-z\ ,\ u(x)=\frac{\sqrt{2y}}{x-z}\ .$$
 In the first case, we have 
 $$L_u\left ( \frac{1}{(x-z)^2}  \right )=-\frac{1}{y}\frac{1}{(x-z)^2},$$
 so that $\lambda =-y^{-1}$, which can obtained from the first formula by letting $z_1\to z$ and $z_2\to z$. Similarly, in the second case, one obtains that $\lambda =y^{-1}$.
 
 As a next step, we study the time evolution of two-solitons. For $N=2$, the matrix $\Ms(t)$ is given by
\be 
\Ms(t)=\left (\begin{array}{cc} \gamma_1-i\rho& -i\lambda^{-1}  \\ i\lambda^{-1} &\gamma_2 + 2 \lambda t \end{array}   \right )
\ee
with some positive number $\rho > 0$ and real numbers $\gamma_1, \gamma_2 \in \R$ and $\lambda \neq 0$ denotes the non-zero eigenvalue of $L_u$. From Proposition \ref{prop:solution} we deduce 
\be \label{2soliton}
u(t,x)=\frac{\mathrm{e}^{i\varphi}\sqrt{2\rho}( \gamma_2 + 2\lambda t +i\lambda^{-1}-x )     }{ x^2-(\gamma_1 -i\rho +\gamma_2 + 2 \lambda t)x+(\gamma_1-i\rho)(\gamma_2 +2 \lambda t) -\lambda^{-2} }
\ee
with some constant $\varphi \in [0,2 \pi)$. The discriminant of the denominator of $u(t,x)$ is
\be 
\Delta(t) =(\gamma_1-i\rho -\gamma_2 - 2 \lambda t)^2+4\lambda^{-2}. 
\ee
Note that $\Delta(t)=0$ if and only if $\gamma_1=\gamma_2 + 2 \lambda t$ and $\rho |\lambda|= 2$. This corresponds to the degenerate two-solitons, the cases 1 and 2 occurring according to the sign of $\lambda$. Notice that, if $\rho |\lambda |=2$, the two-soliton solution of \eqref{SzDNLS}  will be degenerate at exactly one time $t\in \R$ characterized by  $\gamma_1=\gamma_2+2\lambda t$.

Finally, let us study the large-time behaviour of a two-soliton. Let $z_+(t)$ and $z_-(t)$ denote the poles of $u(t,x)$ at time $t$. We see that
\be 
z_\pm (t)=\frac 12\left ( \gamma_0-i\rho +\gamma_1 +2\lambda t\pm \sqrt{(\gamma_0-i\rho -\gamma_1-2\lambda t)^2+4\lambda^{-2}}\right ).
\ee
As $t \to \pm \infty $, we obtain
\be \label{asymptoticpoles}
z_+(t)\to \gamma_0-i\rho,\quad {\rm Re} \, z_-(t)=2\lambda t+O(1), \quad {\rm Im} \, z_-(t) = \frac{-\rho}{4\lambda^4t^2} + O(t^{-3}) .
\ee
The vanishing of ${\rm Im} \, z_-(t)$ as $t \to \pm \infty$ implies growth of the Sobolev norms for the two-soliton solution. More precisely, we claim that
\be \label{eq:sob_grow_2}
\| u(t)\|_{H^s}\sim c_s|t|^{2s} \quad \mbox{as $t\to \pm \infty$ for any $s > 0$}.
\ee
To prove this, we note that $z_+(t) \neq z_-(t)$ for all $t \in \R$ (except for one time $t$ at most) and we have that 
$$
u(t,x) = \frac{a_+(t)}{x-z_+(t)}+\frac{a_-(t)}{x-z_-(t)}, \quad  a_\pm(t)= \sqrt{2\rho}\mathrm{e}^{i\varphi}\frac{\gamma_1+2\lambda t+i\lambda^{-1}-z_\pm(t)}{\sqrt{\Delta (t)}}.
$$ 
From \eqref{asymptoticpoles}, we infer  as $|t| \to \infty$ that
$$a_+(t)=O(1) \quad \mbox{and} \quad a_-(t) \sim \frac{1}{|t|}.$$
Since
$$\hat u(t,\xi )=-2\pi i \left (a_+(t)\mathrm{e}^{-iz_+(t)\xi}+a_-(t)\mathrm{e}^{-iz_-(t)\xi}\right ) \quad \mbox{for} \quad \xi >0,$$
we deduce the bound \eqref{eq:sob_grow_2} from \eqref{asymptoticpoles} and by direct calculation.

\subsection{Long-Time Asymptotics}

We now study the long-time behavior for $N$-solitons with general $N \geq 2$. The key ingredient for the general understanding is the following result about the long-time asymptotics for the eigenvalues of the matrix $\Ms(t)$ in \eqref{eq:Ms}.

\begin{lem} \label{lem:z_largetime}
There exists $T_0 > 0$ sufficiently large such that all eigenvalues 
$$
\{ z_1(t), \ldots, z_{N}(t) \} \subset \C_-
$$
of $\Ms(t)$ are simple for $|t| \geq T_0$. As $|t| \to +\infty$, we have the asymptotic expansions:
$$
\mathrm{Re} \, z_k(t) = 2 \lambda_k t + \gamma_k + O(t^{-1}) \quad \mbox{for $k=1, \ldots, N$}
$$
$$
\mathrm{Im} \, z_1(t) = -\rho + O(t^{-1}), \quad \mathrm{Im} \, z_k(t) = -\frac{ \rho}{4 \lambda_k^4 t^2}  + O(t^{-3}) \quad \mbox{for $k=2, \ldots, N$}.
$$
\end{lem}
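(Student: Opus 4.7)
The plan is to treat $\Ms(t) = 2t\Vs + \Ws$ as a perturbation of the diagonal matrix $2t\Vs$ for large $|t|$. Since $\Vs = \mathrm{diag}(0, \lambda_2, \ldots, \lambda_N)$ has pairwise distinct eigenvalues, the diagonal entries of $2t\Vs$ are separated by gaps of order $|t|$, while $\Ws$ has bounded entries. First, I would apply Rouch\'e's theorem to the characteristic polynomial $\det(\mu I - \Ms(t))$ on fixed disks centered at $2t\lambda_k$ (for $k \geq 2$) and at $\gamma_1 - i\rho$ (for $k = 1$). This shows that for $|t| \geq T_0$ sufficiently large, $\Ms(t)$ has exactly $N$ simple eigenvalues $z_k(t)$, one in each disk, in particular recovering the simplicity claim.

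To extract the asymptotic expansions, I would work directly with the eigenvector equation $(\Ms(t) - z_k(t) I) v = 0$, normalising $v_k = 1$. Setting $y_k := z_k(t) - 2\lambda_k t$ (which is bounded), the $j$-th component for $j \neq k$ satisfies
\begin{equation*}
\bigl(2(\lambda_j - \lambda_k) t + \gamma_j - i\rho\delta_{j1} - y_k\bigr) v_j = -\tfrac{i}{\lambda_j - \lambda_k} - \sum_{l \neq j,k} \tfrac{i}{\lambda_j - \lambda_l} v_l,
\end{equation*}
while the $k$-th component gives
\begin{equation*}
y_k = \gamma_k - i\rho \delta_{k1} + \sum_{l \neq k} \tfrac{i}{\lambda_k - \lambda_l} v_l.
\end{equation*}
Iterating in inverse powers of $t$ yields $v_j = -\tfrac{i}{2(\lambda_j - \lambda_k)^2 t} + O(t^{-2})$ and hence $y_k = \gamma_k - i\rho \delta_{k1} + O(t^{-1})$, which already proves the real-part asymptotic for every $k$ and the full imaginary-part asymptotic for $k = 1$. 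Crucially, the leading $O(t^{-1})$ correction to $y_k$ is real-valued, so for $k \geq 2$ the imaginary part of $y_k$ is already of order $t^{-2}$.

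To obtain the precise $t^{-2}$ coefficient for $k \geq 2$, I would push the expansion one step further. Writing $\varpi_l := \gamma_l - i\rho\delta_{l1}$, one more iteration yields
\begin{equation*}
v_l = -\tfrac{i}{2(\lambda_l - \lambda_k)^2 t} - \tfrac{i(\varpi_l - y_k)}{4(\lambda_l - \lambda_k)^3 t^2} + O(t^{-3}).
\end{equation*}
Plugging this into $y_k = \gamma_k + \sum_{l \neq k} \tfrac{i}{\lambda_k - \lambda_l} v_l$ and taking imaginary parts, only the $l = 1$ contribution survives (since $\mathrm{Im}\,\varpi_l = 0$ for $l \neq 1$), up to a self-correction of order $t^{-4}$; using $\lambda_k - \lambda_1 = \lambda_k$, a short calculation produces $\mathrm{Im}\, z_k(t) = -\rho/(4\lambda_k^4 t^2) + O(t^{-3})$ for $k \geq 2$.

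The main obstacle will be the uniform control of error terms in the iterative expansion as $|t| \to \infty$. The cleanest route is probably to cast the system for $(v_j)_{j \neq k}$ as a fixed-point problem in a weighted $\ell^\infty$-norm with weights tailored to the decay $v_j = O(t^{-1})$, and apply the Banach contraction principle; this simultaneously justifies the simplicity already provided by Rouch\'e and produces analytic dependence on $t^{-1}$. The underlying mechanism is then transparent: the only imaginary entry of $\Ws$ is the $-i\rho$ at position $(1,1)$, and it is transmitted to mode $k$ through the coupling entries $\Ws_{k1} = i/\lambda_k$ and $\Ws_{1k} = -i/\lambda_k$, which produces the $\lambda_k^{-4}$ scaling. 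Both limits $t \to \pm\infty$ are covered by the same argument, since the relevant leading coefficients depend only on $t^2$ at the order needed.
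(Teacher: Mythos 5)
Your overall strategy -- Rouch\'e for simplicity plus a direct iteration on the eigenvector equation, in place of the paper's citation of the third-order Rayleigh--Schr\"odinger coefficients from Reed--Simon -- is a legitimate and more self-contained route to the same expansion, and your first-order analysis (reality of the $O(t^{-1})$ correction to $y_k$, hence $\mathrm{Im}\, y_k = O(t^{-2})$ for $k\geq 2$) is correct. However, there is a genuine gap at the decisive second-order step. Solving
$$\bigl(2(\lambda_l-\lambda_k)t+\varpi_l-y_k\bigr)v_l=-\tfrac{i}{\lambda_l-\lambda_k}-\sum_{m\neq l,k}\tfrac{i}{\lambda_l-\lambda_m}v_m$$
for $v_l$ produces, besides the term you kept, the cross-coupling contribution $-\tfrac{1}{2(\lambda_l-\lambda_k)t}\sum_{m\neq l,k}\tfrac{i}{\lambda_l-\lambda_m}v_m$, which is of size $O(t^{-2})$ (since $v_m=O(t^{-1})$) and is \emph{real} to leading order, namely $-\sum_{m\neq l,k}\tfrac{1}{4(\lambda_l-\lambda_k)(\lambda_l-\lambda_m)(\lambda_m-\lambda_k)^2 t^2}$. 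Since $\mathrm{Im}\,y_k=\sum_{l\neq k}\tfrac{1}{\lambda_k-\lambda_l}\mathrm{Re}\,v_l$ for $k\geq 2$, these terms feed directly into $\mathrm{Im}\,z_k$ at order $t^{-2}$ and are individually nonzero whenever $N\geq 3$. They disappear only after summing over $l$, because the resulting double sum over $(l,m)$ is antisymmetric under $l\leftrightarrow m$; this cancellation is exactly the step the paper carries out explicitly via the identity $\Bs_{kj}\Bs_{j\ell}\Bs_{\ell k}=-\Bs_{k\ell}\Bs_{\ell j}\Bs_{jk}$ for $j\neq\ell$, and your write-up omits it entirely (your expansion is complete only for $N=2$).

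Separately, the sign of your retained second-order term is off: the geometric expansion of $\bigl(1+\tfrac{\varpi_l-y_k}{2(\lambda_l-\lambda_k)t}\bigr)^{-1}$ gives $v_l=-\tfrac{i}{2(\lambda_l-\lambda_k)^2t}+\tfrac{i(\varpi_l-y_k)}{4(\lambda_l-\lambda_k)^3t^2}+\cdots$, with a plus sign. Tracing your minus sign through $\mathrm{Im}\,y_k=\tfrac{1}{\lambda_k}\mathrm{Re}\,v_1+\cdots$ yields $\mathrm{Im}\,z_k(t)=+\rho/(4\lambda_k^4t^2)+O(t^{-3})$, which contradicts both the statement and Lemma \ref{lem:M_lower}; the correct sign recovers $-\rho/(4\lambda_k^4t^2)$. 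So the mechanism you identify (the single imaginary entry $-i\rho$ at position $(1,1)$ transmitted through $\Ws_{k1}\Ws_{1k}$) is the right one, but the computation as displayed does not yet establish the stated asymptotics: you need to restore the cross terms, prove their cancellation by antisymmetry, and fix the sign.
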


\begin{proof}
We use standard eigenvalue perturbation theory for 
$$
\As(\eps) = \As + \eps \Bs
$$ 
with a small parameter $|\eps| \ll 1$. Here $\As=\As^* \in \C^{N\times N}$ is a Hermitian matrix and $\Bs$ denotes an arbitrary matrix in $\C^{N \times N}$. 

Since $\Ms(t) = t ( 2 \Vs + t^{-1} \Ws)$ and by taking $\eps = t^{-1}$, it suffices to study the eigenvalues of
$$
\As(\eps) = \As + \eps \Bs \quad \mbox{with} \quad \mbox{$\As = 2 \Vs$ and $\Bs = \Ws$},
$$
where the constant matrices $\Vs$ and $\Ws$ are displayed in \eqref{eq:VsWs}. Because $\As=2 \mathrm{diag}(\lambda_1, \ldots, \lambda_{N})$ has $N$ simple eigenvalues, standard perturbation theory yields that $\As(\eps)$ has $N$ simple eigenvalue provided that $|\eps| \ll 1$ is sufficiently small. Thus for $|t|\geq T_0$ with $T_0 >0$ sufficiently large, we see that $\Ms(t)=t \As(t^{-1})$ has $N$ simple eigenvalues $\{z_1(t), \ldots, z_N(t) \}$ which all belong to $\C_-$ by Lemma \ref{lem:M_lower}.

The derivation of the claimed asymptotics for the eigenvalues requires an expansion up to order $\eps^3 = t^{-3}$. Let $\{ \mu_1(\eps), \ldots, \mu_{N}(\eps) \}$ denote the eigenvalues of $\As+\eps \Bs$, which are simple for $|\eps| \ll 1$ sufficiently small. From \cite{ReSi-78}[Section XII.1] we recall that
$$
\mu_k(\eps) = \mu_k + \mu_k^{(1)} \eps + \mu_k^{(2)} \eps^2 + \mu_k^{(3)} \eps^3 + O(\eps^4), \quad \mbox{with $\mu_k = 2 \lambda_k$ }.
$$
The first-order coefficient is the well-known expression 
$$
\mu_k^{(1)} = \Bs_{kk} = \gamma_k -i \rho \delta_{k1} .
$$
Next, the second-order contribution is purely real and given by
$$
\mu_k^{(2)} = -\sum_{j =1, j \neq k}^{N} \frac{1}{\mu_j - \mu_k} \Bs_{kj} \Bs_{jk} = -\frac{1}{2} \sum_{j=1, j \neq k}^{N} \frac{1}{(\mu_j - \mu_k)^3} \in \R.
$$
Finally, the third-order term reads
$$
\mu_k^{(3)} = \sum_{j \neq k, \ell \neq k}^{N} \frac{1}{(\mu_j-\mu_k)(\mu_\ell-\mu_k)} \Bs_{kj} \Bs_{j \ell} \Bs_{\ell k} - \sum_{j \neq k}^{N} \frac{1}{(\mu_j-\mu_k)^2} \Bs_{kj} \Bs_{jk} \Bs_{kk} .
$$
For the proof of the lemma, it suffices to determine $\mathrm{Im} \, \mu_k^{(3)}$ for $k=2, \ldots, N$. Since $\Bs_{kk} \in \R$ for $k \geq 2$ and $\Bs_{kj} \Bs_{jk} \in \R$ if $j \neq k$, we see
$$
\mathrm{Im} \sum_{j \neq k}^{N} \frac{1}{(\mu_j-\mu_k)^2} \Bs_{kj} \Bs_{jk} \Bs_{kk} = 0 \quad \mbox{for $k=2, \ldots, N$}.
$$
As for the first sum in the expression for $\mu_k^{(3)}$, we notice the symmetry property
$$
\Bs_{kj} \Bs_{j \ell} \Bs_{\ell k} = -\Bs_{\ell j} \Bs_{\ell j} \Bs_{j k} \quad \mbox{for $j \neq \ell$}.
$$
Thus we only need to consider the diagonal case when $j = \ell$. This leads to
$$
\mathrm{Im} \, \mu_k^{(3)} = \mathrm{Im} \sum_{j \neq k}^{N} \frac{1}{(\mu_j-\mu_k)^2} \Bs_{kj} \Bs_{jj} \Bs_{j k} .
$$ 
Recalling that $\Bs_{kj} \Bs_{jk} \in \R$ for $j \neq k$ and $\Bs_{jj} \in \R$ if $j \geq 2$, we deduce 
$$
\mathrm{Im} \, \mu_k^{(3)} = \mathrm{Im} \frac{1}{(\mu_1-\mu_k)^2} \Bs_{1k} \Bs_{11} \Bs_{k1}  = -\frac{\rho}{4 \lambda_k^4} \quad \mbox{for $k=2, \ldots, N$},
$$
using that $\mu_k= 2 \lambda_k$ and $\mu_1=2 \lambda_1=0$. Since $\Ms(t) = t^{-1} \As(t^{-1})$, we obtain the claimed asymptotic formulae.
\end{proof}

As a consequence of the preceding lemma, we obtain the following result.

\begin{prop} \label{prop:sol_large_time}
Let $u(t,x)$ be an $N$-soliton solution. Then there exists $T_0 > 0$ sufficiently large such that
$$
u(t,x) = \sum_{j=1}^N \frac{a_j(t)}{x-z_j(t)} \quad \mbox{for $|t| \geq T_0$},
$$
where $\{ z_1(t), \ldots, z_N(t) \} \subset \C_-$ denote the simple eigenvalues of $\Ms(t)$ and  coefficients $a_1(t), \ldots, a_N(t) \in \C \setminus \{ 0 \}$. 
\end{prop}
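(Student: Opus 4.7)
My plan is to combine Lemma \ref{lem:z_largetime} with a spectral/partial-fraction expansion of the resolvent appearing in the inverse spectral formula of Proposition \ref{prop:solution}, and then use the multi-soliton constraint $P\bar P = i(Q'\bar Q-\bar Q'Q)$ to rule out vanishing residues.

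\textbf{Step 1 (spectral structure of $\Ms(t)$).} I first invoke Lemma \ref{lem:z_largetime} to fix $T_0>0$ such that, for every $|t|\geq T_0$, the matrix $\Ms(t)\in\C^{N\times N}$ has $N$ pairwise distinct eigenvalues $z_1(t),\dots,z_N(t)$, all located in $\C_-$ (the latter also being guaranteed by Lemma \ref{lem:M_lower}). In particular, $\Ms(t)$ is diagonalizable on $\C^N$ with one-dimensional spectral (Riesz) projectors $P_j(t)$ onto $\ker(\Ms(t)-z_j(t)I)$, satisfying $\sum_j P_j(t)=I$ and $P_j(t)P_k(t)=\delta_{jk}P_j(t)$.

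\textbf{Step 2 (partial fractions in the inverse formula).} For $|t|\geq T_0$, the resolvent admits the partial-fraction expansion
$$
(\Ms(t)-xI)^{-1}=-\sum_{j=1}^N \frac{P_j(t)}{x-z_j(t)}.
$$
Plugging this into the inverse spectral formula of Proposition \ref{prop:solution} yields
$$
u(t,x)=\sqrt{2\rho}\,e^{i\varphi}\,\langle (\Ms(t)-xI)^{-1}X,Y\rangle_{\C^N}=\sum_{j=1}^N\frac{a_j(t)}{x-z_j(t)},
$$
with residues $a_j(t):=-\sqrt{2\rho}\,e^{i\varphi}\,\langle P_j(t)X,Y\rangle_{\C^N}$. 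This identity, valid a priori for $\mathrm{Im}\,x\geq 0$, then extends as an identity of rational functions in $x$.

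\textbf{Step 3 (non-vanishing of the residues $a_j(t)$).} The remaining point is that $a_j(t)\ne 0$ for every $j$. Here I use the identity established in the remark following Theorem \ref{thm:gwp_N_soliton}: setting $Q(t,x):=\det(xI-\Ms(t))=\prod_{j=1}^N(x-z_j(t))$, the characterization of an $N$-soliton in Proposition \ref{prop:spectralNsoliton} gives
$$
|u(t,x)|^2=\frac{1}{i}\!\left(\frac{\bar Q'(t,x)}{\bar Q(t,x)}-\frac{Q'(t,x)}{Q(t,x)}\right)=\sum_{j=1}^N\frac{-2\,\mathrm{Im}\,z_j(t)}{|x-z_j(t)|^2}\qquad(x\in\R).
$$
Regarded as a rational function of $x\in\C$ via $x\mapsto u(t,x)\,\overline{u(t,\bar x)}$, the right-hand side has a genuine simple pole at each $z_j(t)$ because $-\mathrm{Im}\,z_j(t)>0$. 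Since $\overline{u(t,\bar x)}$ is regular at $x=z_j(t)$ (its poles sit at $\bar z_k(t)\in\C_+$, which are disjoint from $\{z_j(t)\}\subset\C_-$), the factor $u(t,x)$ must itself have a simple pole at $z_j(t)$. Hence $a_j(t)\ne 0$.

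\textbf{Main obstacle.} Step 3 is the only non-routine step: the spectral expansion of the resolvent might, a priori, produce vanishing residues, which would correspond to roots of $Q(t,\cdot)$ that do not appear as poles of $u(t,\cdot)$. The $|u|^2$ identity above — a direct algebraic consequence of the multi-soliton constraint $P\bar P=i(Q'\bar Q-\bar Q'Q)$, together with $z_j(t)\in\C_-$ from Lemma \ref{lem:M_lower} — precisely forbids this degeneracy, and thereby closes the proof.
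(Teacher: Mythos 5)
Your proof is correct, and it reaches the partial-fraction representation by a somewhat different route than the paper. Both arguments begin with Lemma \ref{lem:z_largetime} to secure $N$ simple eigenvalues $z_1(t),\dots,z_N(t)\in\C_-$ of $\Ms(t)$ for $|t|\geq T_0$. The paper then works with the representation $u=P/Q$ of Proposition \ref{prop:spectralNsoliton}, identifies $Q(x)=\det(xI-\Ms(t))$ by invoking an external lemma from \cite{Su-21}, and obtains the non-vanishing of the residues by showing $P$ and $Q$ are coprime: evaluating $P\ov P=i(Q'\ov Q-\ov Q'Q)$ at a simple zero $z_j$ of $Q$ gives $P(z_j)\ov P(z_j)=iQ'(z_j)\ov Q(z_j)\neq0$, because $\ov Q$ has all its zeros in $\C_+$. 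You bypass the $P/Q$ representation entirely: the Riesz-projector expansion of $(\Ms(t)-xI)^{-1}$ in the inverse spectral formula of Proposition \ref{prop:solution} delivers the pole decomposition directly, together with the explicit residues $a_j(t)=-\sqrt{2\rho}\,\mathrm{e}^{i\varphi}\langle P_j(t)X,Y\rangle_{\C^N}$, and the non-vanishing is read off from the complexification of the identity \eqref{eq:Nsol}, which has a genuine simple pole at each $z_j(t)$ that cannot be produced by the factor $\ov{u(t,\ov x)}$, whose poles lie in $\C_+$. The two proofs rest on the same two pillars --- simplicity of $\sigma(\Ms(t))$ and the algebraic constraint characterizing multi-solitons --- but yours avoids the citation of \cite{Su-21} and produces a residue formula for free, at the cost of routing the non-degeneracy through \eqref{eq:Nsol} rather than through the coprimality of $P$ and $Q$. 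One small precision worth adding in your Step 3: the residue of the complexified right-hand side of \eqref{eq:Nsol} at $z_j$ equals $i$ (coming from $-\tfrac1i Q'/Q$ at a simple zero of $Q$), and the role of $-\mathrm{Im}\,z_j>0$ is to separate $z_j$ from the poles $\ov z_k\in\C_+$ of the other term so that no cancellation can occur; it is this separation, not the positivity of the real-line coefficient $-2\,\mathrm{Im}\,z_j$, that makes the pole genuine.
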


\begin{proof}
By Lemma \ref{lem:z_largetime}, the eigenvalues $\{ z_1(t), \ldots, z_N(t) \} \subset \C_-$ of $\Ms(t)$ are simple whenever $|t| \geq T_0$, where $T_0 > 0$ is some sufficiently large constant. 

Fix some  time $t \in \R$ with $|t| \geq T_0$. For notational convenience, we will omit the dependence of $u(t,x)$ and $\Ms(t)$ for the rest of the proof. Since $u \in H^1_+(\R)$ is a multi-soliton potential, we have
$$
u(x) = \frac{P(x)}{Q(x)}
$$
with some polynomial $Q \in \C[x]$ of degree $N$ having all its zeros in $\C_-$ and some polynomial $P \in \C_{N-1}[x]$ satisfying the condition in Proposition \ref{prop:spectralNsoliton}. Recall that $\Ms$ denotes the matrix (with respect to a suitable orthonormalbasis) of the operator $G$ acting on the invariant space $K_\theta= \frac{\C_{N-1}[x]}{Q(x)}$. From \cite{Su-21}[Lemma 3.3] we observe that $Q(x)$ is the characteristic polynomial of $G$ acting on $K_\theta$. Hence we conclude
$$
Q(x) = \det (xI  - \Ms).
$$
Since $\Ms$ has only simple eigenvalues, we find $Q(x) = \prod_{j=1}^N (x-z_j)$ with pairwise distinct zeros $z_j \in \C_-$. Since $P \ov{P} = i (Q' \ov{Q}-\ov{Q}' Q)$, we see that $P \in \C_{N-1}[x]$ has no common zeros with $Q(x)$. Thus, by partial fraction expansion, we conclude that
$$
u(x) = \frac{P(x)}{Q(x)} = \sum_{j=1}^N \frac{a_j}{x-z_j}
$$
with some  $a_1, \ldots, a_N \in \C \setminus \{0 \}$.
\end{proof}

\subsection{Growth of Sobolev Norms: Proof of Theorem \ref{thm:growth_intro}}
Let $N \geq 2$ and suppose $u(t,x)$ is an $N$-soliton, which by Theorem \ref{thm:gwp_N_soliton} exists for all times $t \in \R$. We claim that
\be  \label{eq:u_sob}
\|u(t) \|_{H^s} \sim_s t^{2s} \quad \mbox{as} |t| \to +\infty
\ee
for any $s >0$. 

For convenience, we discuss the limit $t \to +\infty$. (The case $t \to -\infty$ follows by the same reasoning.) We divide the proof into following steps.

\medskip
\textbf{Step 1.} We first derive the asymptotics for $a_1(t), \ldots, a_N(t)$. From Proposition \ref{prop:sol_large_time} we deduce that
\be \label{eq:u_sol_asy}
u(t,x) = \sum_{j=1}^N \frac{a_j(t)}{x-z_j(t)} \quad \mbox{for} \quad t \geq T_0
\ee
with some sufficiently large constant $T_0> 0$. Now we are in the position to use the system of differential equations in \eqref{eq:cm_one} for the time evolution of $a_1(t), \ldots, a_N(t)$. We claim that
\be \label{eq:a_asymp}
a_k(t) = \frac{\alpha}{\lambda_k^2 t} + O(t^{-2}) \quad \mbox{for $k=2, \ldots, N$}
\ee
with some non-zero constant $\alpha \in \C \setminus \{ 0 \}$. Indeed, using that $\dot{z}_k = 2 \lambda_k + o(1)$ as $t \to +\infty$ by Lemma \ref{lem:z_largetime}, we deduce from \eqref{eq:cm_one} that
$$
\vec{a}(t) = A(t) \vec{a}(t) + \vec{b}(t)
$$
where $\vec{a}(t)=(a_2(t), \ldots, a_N(t)) \in \C^{N-1}$ and $A(t) = (A(t))_{2 \leq k, \ell \leq N} \in \C^{N-1} \times \C^{N-1}$, $\vec{b}(t)=(b_2(t), \ldots, b_N(t)) \in \C^{N-1}$ are given by
$$
(A(t))_{k \ell} = \frac{-i}{(\lambda_k + o(1)) (z_k - z_\ell)} = O(t^{-1}) \quad \mbox{for $k \neq \ell$}, \quad (A(t))_{kk} = 0,
$$
$$
b_k(t) = \frac{-i a_0(t)}{(\lambda_k + o(1)) ( z_k - z_1)} = O(t^{-1}) a_1(t) .
$$
Here we also used that $|z_k(t)-z_\ell(t)| \sim t$ as $|t| \to \infty$ which follows from Lemma \ref{lem:z_largetime} together with the fact that all $\lambda_k \neq \lambda_\ell$ for $k \neq \ell$. Next, as a direct consequence of \eqref{eq:cm_one}, we infer the conservation law
\be \label{eq:A_conv}
A = \sum_{j=1}^N a_j(t) = \frac{\hat{u}(t,0^+)}{-2 \pi i}  \quad \mbox{for $t \geq T_0$},
\ee
with some constant $A \in \C$, where the last equation follows from taking the Fourier transform of the right-hand side in \eqref{eq:u_sol_asy}. Since we must have $\hat{u}(t, 0^+) \neq 0$ by the discussion above, we conclude that $A \neq 0$ as well. Hence we deduce that
\be \label{eq:a_2}
\vec{a}(t) = O(t^{-1}) \vec{a}(t) + \vec{f}(t)
\ee
where $\vec{f}(t) = (f_2(t), \ldots, f_N(t))$ is given by
$$
f_k(t) =  \frac{-iA}{(\lambda_k+o(1)) (z_k(t) - z_1(t))} = \frac{-iA}{2 \lambda_k^2 t} + O(t^{-2}),
$$
thanks to the fact that $z_k(t)-z_1(t) = 2 \lambda_k t + O(1)$ by Lemma \ref{lem:z_largetime} and $\lambda_1=0$. In view of \eqref{eq:a_2}, we conclude that \eqref{eq:a_asymp} holds with the constant $\alpha = \frac{-iA}{2} \neq 0$.

Finally, by the conservation law \eqref{eq:A_conv} together with \eqref{eq:a_asymp} we immediately find
\be
\lim_{t \to +\infty} a_1(t) = A \neq 0 .
\ee

\medskip
\textbf{Step 2.} For $t \geq T_0$, we write
\be
u(t,x) = \sum_{j=1}^N \phi_j(t,x) \quad \mbox{with} \quad \phi_j(t,x) = \frac{a_j(t)}{x-z_j(t)}.
\ee
We note that
$$
\hat{\phi}_j(t,\xi) = -2 \pi i a_j(t) \mathrm{e}^{-i z_j \xi} \mathds{1}_{\xi \geq 0} .
$$
By Plancherel, we find
\begin{align*}
\langle \phi_j(t), \phi_k(t)\rangle_{H^s} & = -i a_j(t) \ov{a}_k(t) \int_0^\infty \mathrm{e}^{-i (z_j(t) - \ov{z}_k(t))\xi} \langle \xi \rangle^{2s} \, d \xi
\end{align*}
To estimate this expression for $j \neq k$, we observe, by Lemma \ref{lem:z_largetime}, that $\omega_{jk}(t):= z_j(t) - \ov{z}_k(t)$ satisfies $\mathrm{Im} \, \omega_{jk}(t) < 0$ as well as
$$
\omega_{jk}(t) = 2(\lambda_j-\lambda_k) t + O(1) \sim t \quad \mbox{as} \quad t \to +\infty,
$$
since $\lambda_j \neq \lambda_j$ when $j \neq k$. Integrating by parts sufficiently many times depending on $s>0$, we deduce that
$$
\left | \int_0^\infty \mathrm{e}^{-i \omega_{jk}(t)\xi} \langle \xi \rangle^{2s} \, d \xi \right | \lesssim_s \frac{1}{|\omega_{jk}(t)|} \sim \frac{1}{t} \quad \mbox{for $t \geq T_0$},
$$
provided that $j \neq k$. If recall the bounds for $a_1(t), \ldots, a_N(t)$ derived in \textbf{Step 1} above, we can conclude
$$
\left | \langle \phi_j(t), \phi_k(t) \rangle_{H^s} \right | \lesssim_s \frac{1}{t^2} \quad \mbox{for $t \geq T_0$ and $j \neq k$}.
$$

Next, we consider the case $j = k$. This yields
\begin{align*}
\langle \phi_j(t), \phi_j(t) \rangle_{H^s} & \sim  |a_j(t)|^2 \int_0^\infty \mathrm{e}^{2 \mathrm{Im}(z_j(t)) \xi} (1+ |\xi|^{2s})  d \xi \\
& = |a_j(t)|^2 \left ( \frac{1}{2 |\mathrm{Im} \, z_j(t)|} + \frac{C_s}{2  |\mathrm{Im} \, z_j(t)|^{1+2s}} \right )
\end{align*}
with the constant $C_s = \int_0^\infty \mathrm{e}^{-y} y^{2s} \, dy>0$. By combining the estimates for the coefficients $\{ a_1(t), \ldots, a_N(t) \}$ from \textbf{Step 1} and the poles $\{ z_1(t), \ldots, z_N(t) \}$ from Lemma \ref{lem:z_largetime} we finally obtain
$$
\langle \phi_j(t), \phi_j(t) \rangle_{H^s} \simeq_s \begin{dcases*} 1 & for $j=1$, \\ t^{4s} & for $j=2, \ldots, N$, \end{dcases*}
$$
for all times $t \geq T_0$. In summary, we conclude 
$$
 t^{4s} + t^{-2} \lesssim_s \| u(t) \|_{H^s}^2 = \sum_{j=1}^N \langle \phi_j(t), \phi_j(t) \rangle + \sum_{j \neq k}^N \langle \phi_j(t), \phi_k(t) \rangle \lesssim_s t^{4s} + t^{-2}.
$$
This proves \eqref{eq:u_sob} and completes the proof of Theorem \ref{thm:growth_intro}. \hfill $\qed$

\begin{appendix}

\section{Definition of the Lax Operator for $u \in L^2_+(\R)$}

\label{app:lax}

In view of the $L^2$-criticality of (CM-DNLS), it is worthwhile giving a definition of $L_u$ via quadratic forms if we only assume that $u \in L^2_+(\R)$ holds. We start with the following basic estimate.

\begin{lem}\label{lem:GN_basic}
For every $u\in L^2_+(\R )$ and $f\in H^{\frac 12}_+(\R )$, we have $T_{\overline u}f\in L^2_+(\R )$ with
$$\Vert T_{\overline u}f\Vert _{L^2}^2\leq \frac 1{2\pi} \Vert u\Vert_{L^2}^2\inner {Df} f\ .$$
\end{lem}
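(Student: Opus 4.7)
My plan is to prove this bound purely on the Fourier side, using an explicit formula for $\widehat{T_{\bar{u}}f}$ and the Cauchy--Schwarz inequality, with the factor of $2\pi$ coming out cleanly from Plancherel's theorem and Fubini's theorem.

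First I would write down the Fourier representation of $T_{\bar u}f = \Pi_+(\bar u f)$. Since $u\in L^2_+(\R)$, the Fourier transform $\hat{\bar u}(\zeta)=\overline{\hat u(-\zeta)}$ is supported on $(-\infty,0]$, while $\hat f$ is supported on $[0,+\infty)$. Computing the convolution and projecting onto positive frequencies, a routine change of variable yields, for $\xi\ge 0$,
\[
\widehat{T_{\bar u}f}(\xi)=\frac{1}{2\pi}\int_0^{\infty}\overline{\hat u(\tau)}\,\hat f(\tau+\xi)\,d\tau.
\]

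Next, I would apply the plain Cauchy--Schwarz inequality to this integral, treating $\overline{\hat u(\tau)}$ and $\hat f(\tau+\xi)$ symmetrically, to obtain
\[
|\widehat{T_{\bar u}f}(\xi)|^2\le \frac{1}{4\pi^2}\left(\int_0^\infty|\hat u(\tau)|^2\,d\tau\right)\left(\int_0^\infty|\hat f(\tau+\xi)|^2\,d\tau\right).
\]
Integrating in $\xi\in(0,\infty)$, the key step is to apply Fubini's theorem to the double integral in the last factor: after the substitution $\eta=\tau+\xi$,
\[
\int_0^\infty\!\!\int_0^\infty |\hat f(\tau+\xi)|^2\,d\tau\,d\xi=\int_0^\infty\!\!\int_0^\eta d\xi\,|\hat f(\eta)|^2\,d\eta=\int_0^\infty\eta\,|\hat f(\eta)|^2\,d\eta.
\]
This is exactly the weight needed to recover $\langle Df,f\rangle$.

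Finally, I would assemble the pieces using Plancherel's identities $\|u\|_{L^2}^2=\frac{1}{2\pi}\int_0^\infty|\hat u|^2\,d\tau$, $\langle Df,f\rangle=\frac{1}{2\pi}\int_0^\infty\eta|\hat f(\eta)|^2\,d\eta$, and $\|T_{\bar u}f\|_{L^2}^2=\frac{1}{2\pi}\int_0^\infty|\widehat{T_{\bar u}f}(\xi)|^2\,d\xi$. Tallying the factors of $2\pi$ yields precisely the claimed inequality with constant $\frac{1}{2\pi}$, which in particular shows that $T_{\bar u}f\in L^2_+(\R)$. I do not anticipate any real obstacle; the only points requiring mild care are justifying the convolution formula when $u$ is merely $L^2$ (which can be done by approximating $u$ by Schwartz functions in $L^2_+(\R)$ and passing to the limit) and keeping track of the Fourier normalization convention fixed earlier in the paper.
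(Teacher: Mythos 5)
Your proposal is correct and follows essentially the same route as the paper: the explicit Fourier formula for $\widehat{T_{\overline u}f}$, Cauchy--Schwarz in the $\tau$-integral, and Fubini to produce the weight $\eta\,|\hat f(\eta)|^2$, with the constants assembled via Plancherel. The factors of $2\pi$ check out, so there is nothing to add.
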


\begin{proof}
Applying the Fourier transformation, we have
$$\widehat{T_{\overline u}f}(\xi )=\int_0^{+\infty} \hat f(\xi +\eta )\overline{\hat u(\eta )}\, \frac{d\eta}{2\pi}\ .$$ 
Thus, by the Cauchy--Schwarz inequality,
\begin{eqnarray*}
\int_0^\infty |\widehat{T_{\overline u}f}(\xi )|^2\, d\xi &\leq &\int_0^{+\infty}\int_0^{+\infty} |\hat f(\xi +\eta )|^2\, \frac{d\xi \, d\eta}{2\pi}\  \int_0^{+\infty}|\hat u(\eta )|^2\, \frac{d\eta}{2\pi}\\
 &\leq & \int_0^{+\infty} \zeta |\hat f(\zeta)|^2\, \frac{d\zeta }{2\pi}\ \int_0^{+\infty}|\hat u(\eta )|^2\, \frac{d\eta}{2\pi}\ ,
\end{eqnarray*}
and the claim follows from Plancherel's theorem.
\end{proof}

Next, we define $L_u$ for given $u \in L^2_+(\R)$ via a densely defined quadratic form on $L_+^2(\R)$ as follows. For $f, g \in H^{1/2}_+(\R)$, we set
$$\mathcal{Q}_u(f,g)=\inner {Df} g - \inner {T_{\overline u}f} {T_{\overline u}g}  \ .$$
We claim that, for every $\e >0$, there exists a constant $C_\e (u)>0$ such that 
\be \label{refined}
\forall h\in H^{1/2}_+(\R )\ ,\ \Vert T_{\overline u}h\Vert_{L^2}\leq \e {\inner {Dh} h}^{1/2} +C_\e(u)\Vert h\Vert_{L^2}\ .
\ee
Indeed, for every $\lambda >0$, set 
$$u_{<\lambda }:={\bf 1}_{[0,\lambda [}(D)u\ ,\ u_{\geq \lambda }:={\bf 1}_{[\lambda ,+\infty [}(D)u\ .$$
Then $\Vert u_{\geq \lambda }\Vert_{L^2}\to 0$ as $\lambda \to +\infty $, while 
$$\Vert u_{<\lambda }\Vert_{L^\infty}\leq \left ( \frac{\lambda}{2\pi} \right )^{1/2}\Vert u\Vert _{L^2}\ .$$
Choose $\lambda =\lambda (\e, u)$ such that $(2\pi)^{-1/2} \Vert u_{\geq \lambda}\Vert_{L^2}\leq \e $. Then, by Lemma \ref{lem:GN_basic},
$$\Vert T_{\overline u}h\Vert _{L^2}\leq \Vert T_{\overline u_{\geq \lambda}}h\Vert _{L^2}+\Vert T_{\overline u_{<\lambda}}h\Vert _{L^2}\leq \e {\inner {Dh} h}^{1/2}+\Vert u_{<\lambda }\Vert_{L^\infty}\Vert h\Vert_{L^2}\ ,$$
and \eqref{refined} follows.
Applying \eqref{refined}, we obtain
\be \label{equivQ}
\inner {Df} f \geq \mathcal{Q}_u(f,f)\geq (1-2\e ^2)\inner {Df} f -2C_\e (u)^2\Vert f\Vert_{L^2}^2\ .
\ee
Choosing $\e $ small enough, we find a constant $K=K(u)>0$ such that 
$$\tilde{\mathcal{Q}}_u(f,g):=\mathcal{Q}_u(f,g)+K(u)\inner f g$$
is an inner product on $H^{1/2}_+(\R )$, defining a norm which is equivalent to the standard one. Then we just define $${\rm dom}(L_u) =\{ f\in H^{1/2}_+(\R ): \mbox{$\exists C>0$ s.\,t.~$|\mathcal{Q}_u(f,g)|\leq C\Vert g\Vert_{L^2}$ for $g\in H^{1/2}_+(\R )$}\}$$
and 
$$\inner {L_u(f)} g=\mathcal{Q}_u(f,g)\  \quad \mbox{for $f\in {\rm dom}(L_u)$ and $g\in H^{1/2}_+(\R )$} \ .$$
Then the standard theory of quadratic forms (see \cite{ReSi-80}) implies that ${\rm dom}(L_u)$ is dense in $H^{1/2}_+(\R )$, hence in $L^2_+(\R )$, and that $L_u$ is self-adjoint and bounded below. Furthermore, using the quadratic form $\mathcal{Q}_u$, Lemma \ref{lem:firstbracket} and Proposition \ref{prop:L_basic} extend easily to the case $u\in L^2_+(\R )$.

\section{Variational Properties of $E(u)$}\label{varE}

We recall the energy functional 
$$
E(u) = \frac{1}{2} \int_{\R} |\pt_x u - i \Pi_+(|u|^2) u|^2 \, dx,
$$
where we allow for general $u \in H^1(\R)$, which are not necessarily in the Hardy-Sobolev space $H^1_+(\R)$. It is elementary to show that $E : H^1(\R) \to \R_{\geq 0}$ is {\em weakly lower semi-continuous}, i.\,e., if $u_k \weakto u$ weakly in $H^1(\R)$ then
\be \label{eq:wlsc}
\liminf_{n \to \infty} E(u_n) \geq E(u).
\ee

\begin{lem}[Minimal Mass Bubble Lemma]\label{lem:MMB} Suppose $(v_n)_{n \in \N} \subset H^1(\R)$ is a sequence such that
$$
\sup_{n \geq 1} \| v_n \|_{L^2} < +\infty \quad \mbox{and} \quad \| \pt_x v_n \|_{L^2} = \mu \quad \mbox{for all $n \in \N$}.
$$ 
with some constant $\mu > 0$. In addition, we assume that
$$
\lim_{n \to \infty} E(v_n) = 0.
$$
Then it holds that
$$
\liminf_{n \to \infty} \| v_n \|_{L^2}^2 \geq \| \RR\|_{L^2}^2 = 2 \pi.
$$
Here  equality holds if and only if, after possibly passing to a subsequence, 
$$
v_n(x + x_n) \to \mathrm{e}^{i \theta} \lambda^{1/2} \RR(\lambda x)\quad \mbox{strongly in $L^2(\R)$}
$$ 
with some constants $\theta \in [0,2 \pi)$, $\lambda > 0$, and some sequence $x_n\in \R$.

\end{lem}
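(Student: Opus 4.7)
The plan is to adapt a concentration--compactness strategy in the style of Lions, Merle, and Hmidi--Keraani. By the $L^2$-critical rescaling $\tilde v_n(x):=\mu^{-1/2}v_n(x/\mu)$ one has $\|\pt_x \tilde v_n\|_{L^2}=1$, $\|\tilde v_n\|_{L^2}=\|v_n\|_{L^2}$, and $E(\tilde v_n)=\mu^{-2}E(v_n)\to 0$; this reduces the problem to the normalization $\mu=1$, which I henceforth assume (still writing $v_n$). From $E(v_n)\to 0$ and the defining identity $E(u)=\tfrac12\|\pt_x u-\ii\Pi_+(|u|^2)u\|_{L^2}^2$ I infer
\begin{equation*}
\pt_x v_n-\ii\Pi_+(|v_n|^2)v_n\longrightarrow 0\quad\text{in }L^2(\R),
\end{equation*}
whence $\|\Pi_+(|v_n|^2)v_n\|_{L^2}\to\|\pt_x v_n\|_{L^2}=1$. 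Since $\Pi_+$ is an orthogonal projection and $\||v_n|^2 v_n\|_{L^2}^2=\|v_n\|_{L^6}^6$, this forces $\liminf_n\|v_n\|_{L^6}>0$. Lions' concentration--compactness lemma on $\R$ then furnishes, along a subsequence, translations $x_n\in\R$ and a nonzero profile $V\in H^1(\R)$ such that $w_n(x):=v_n(x+x_n)\weakto V$ weakly in $H^1(\R)$, strongly in $L^p_{\mathrm{loc}}(\R)$ for every finite $p$, and pointwise a.e.\ along a further subsequence.

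Next, I identify $V$ as a ground state by passing to the limit in the equation. Translation invariance yields $\pt_x w_n-\ii\Pi_+(|w_n|^2)w_n\to 0$ in $L^2$. Pairing against $\phi\in\mathcal S(\R)$, weak $H^1$-convergence gives $\langle\pt_x w_n,\phi\rangle\to\langle\pt_x V,\phi\rangle$; for the nonlinear piece, self-adjointness $\Pi_+=\Pi_+^*$ on $L^2$ lets me write
\begin{equation*}
\langle\Pi_+(|w_n|^2)w_n,\phi\rangle=\int_{\R}|w_n|^2\,\ov{\Pi_+(\ov{w_n}\phi)}\,dx.
\end{equation*}
The product $\ov{w_n}\phi$ converges to $\ov V\phi$ strongly in $L^2(\R)$ by local Rellich compactness combined with the decay of $\phi$, so $\Pi_+(\ov{w_n}\phi)\to\Pi_+(\ov V\phi)$ in $L^2$. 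Meanwhile, the $L^2$-boundedness of $|w_n|^2$ (via $H^1\hookrightarrow L^4$) together with a.e.\ convergence gives $|w_n|^2\weakto|V|^2$ weakly in $L^2$. Taking the weak--strong pairing yields
\begin{equation*}
\langle\Pi_+(|w_n|^2)w_n,\phi\rangle\longrightarrow\int_{\R}|V|^2\,\ov{\Pi_+(\ov V\phi)}\,dx=\langle\Pi_+(|V|^2)V,\phi\rangle.
\end{equation*}
Hence $\pt_x V-\ii\Pi_+(|V|^2)V=0$ holds in distributions, and thus in $L^2$. By Lemma~\ref{lem:R_uniq}, there exist $\theta\in[0,2\pi)$, $\lambda>0$, $y\in\R$ such that $V(x)=\mathrm{e}^{\ii\theta}\lambda^{1/2}\RR(\lambda x+y)$.

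Finally, since every ground state satisfies $\|V\|_{L^2}^2=\|\RR\|_{L^2}^2=2\pi$, weak $L^2$-convergence $w_n\weakto V$ yields
\begin{equation*}
\liminf_{n\to\infty}\|v_n\|_{L^2}^2=\liminf_{n\to\infty}\|w_n\|_{L^2}^2\geq 2\pi,
\end{equation*}
as claimed. In the equality case, the subsequence realizing the liminf satisfies $\|w_n\|_{L^2}\to\|V\|_{L^2}$, and combined with the weak convergence this forces strong $L^2$-convergence $w_n\to V$ by the standard Hilbert-space identity $\|w_n-V\|^2=\|w_n\|^2-2\mathrm{Re}\langle w_n,V\rangle+\|V\|^2$. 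Replacing $x_n$ by $x_n-y/\lambda$ absorbs the translation $y$ and produces exactly $v_n(\cdot+x_n)\to \mathrm{e}^{\ii\theta}\lambda^{1/2}\RR(\lambda\cdot)$ in $L^2$. The main technical delicacy in the plan is the commutation of the limit with the nonlocal projector $\Pi_+$ carried out in the middle paragraph; the dualization trick together with strong $L^2$-convergence of the product $\ov{w_n}\phi$ against a Schwartz test function is what makes that step work without recourse to a more involved Brezis--Lieb decomposition of the full energy.
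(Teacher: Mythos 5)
Your proof is correct and reaches the same endpoint---Lemma~\ref{lem:R_uniq}---but by a somewhat different route than the paper. Where the paper rules out vanishing via the $pqr$-Lemma of Fr\"ohlich--Lieb--Loss and then invokes Lieb's compactness lemma, you invoke Lions' concentration--compactness directly; these are essentially interchangeable once the uniform $L^6$ lower bound is in hand. The more substantive divergence is in how the limit profile $V$ is identified: the paper uses weak lower semicontinuity of the nonnegative functional $E$ to conclude $E(V)=0$, which is equivalent to the ground state equation, whereas you pass to the limit in the relation $\pt_x w_n - \ii\Pi_+(|w_n|^2)w_n \to 0$ directly, handling the nonlocal term by dualizing $\Pi_+$ onto the test function. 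Your dualization argument is sound (strong $L^2$ convergence of $\ov{w_n}\phi$ for Schwartz $\phi$, paired against the weak $L^2$ convergence of $|w_n|^2$), and it in fact makes explicit the work that is hidden in the paper's assertion that $E$ is ``elementarily'' weakly lower semicontinuous. One small slip: to get $\liminf_n\|v_n\|_{L^6}>0$ you justify the bound on $\|\Pi_+(|v_n|^2)v_n\|_{L^2}$ by treating $\Pi_+$ as acting on the full product $|v_n|^2v_n$, but the expression is $\bigl[\Pi_+(|v_n|^2)\bigr]\cdot v_n$; the correct estimate is $\|\Pi_+(|v_n|^2)v_n\|_{L^2}\le \|\Pi_+(|v_n|^2)\|_{L^3}\|v_n\|_{L^6}\lesssim \|v_n\|_{L^6}^3$, using H\"older and the $L^3$-boundedness of $\Pi_+$ (Riesz), which is exactly what the paper does and yields the same conclusion.
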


\begin{proof}
We will give a proof that is based on a compactness lemma in \cite{Li-83}. Alternatively, we could use more refined analysis with a profile decomposition \cite{Ge-98}. 

By rescaling $v_n \to \mu^{-1/2} v_n(\mu^{-1} \cdot)$, we can assume that $\| \pt_x v_n \|_{L^2} = 1$ for all $n \in \N$. From the triangle inequality and the form of $E(u)$, we find that
\begin{align*}
\| \Pi_+(|v_n|^2) v_n \|_{L^2} \geq \| \pt_x v_n \|_{L^2} - \| \pt_x v_n - i \Pi_+(|v_n|^2) v_n \|_{L^2} = 1 - \sqrt{2E(v_n)}.
\end{align*}
Since $E(v_n) \to 0$ by assumption, we deduce that
$$
1 \lesssim \| \Pi_+(|v_n|^2) v_n \|_{L^2} \lesssim \| \Pi_+(|v_n|^2) \|_{L^3} \| v_n \|_{L^6} \lesssim \| |v_n|^2 \|_{L^3} \| v_n \|_{L^6} \lesssim \| v_n \|_{L^6}^3, 
$$
by H\"older's inequality and the classical fact that $\Pi_+ : L^3(\R) \to L^3(\R)$ is bounded. Thus we have found  that
$$
\|v_n \|_{L^6} \geq C >0
$$
with some constant $C> 0$. On the other hand, by the fact $\sup_{n} \| v_n \|_{H^1} < +\infty$ and by Sobolev embeddings, we deduce that
$$
\| v_n \|_{L^2} \leq C_1 \quad \mbox{and} \quad \| v_n \|_{L^8} \leq C_2
$$
with some constants $C_1, C_2 > 0$. Thus, by applying the $pqr$-Lemma in \cite{FrLiLo-86}, we deduce that there exist constants $\eps > 0$ and $\delta > 0$ such that
$$
\mu( \{ x \in \R : |v_n(x)| > \eps \}) \geq \delta \quad \mbox{for all $n \in \N$},
$$
where $\mu$ denotes the Lebesgue measure on $\R$. Hence we can apply Lieb's compactness lemma in \cite{Li-83} to deduce that, after passing to a subsequence, there exists a sequence of translations $(y_n)_{n \in \N} \subset \R$ such that
$$
v_n(\cdot + y_n) \weakto v \quad \mbox{weakly in $H^1(\R)$}
$$
for some $v \in H^1(\R)$ with $v \not \equiv 0$. 

By translation invariance of the energy $E$, we can henceforth assume that $y_n=0$ for all $n$. Furthermore, the weak lower semi--continuity of $E$ implies that
$$
0 = \lim_{n \to \infty} E(v_n) \geq E(v) .
$$
On the other hand, we have $E(v) \geq 0$ in general and hence we conclude $E(v)=0$. By Lemma \ref{lem:R_uniq}, the equality $E(v)=0$ for $v \not \equiv 0$ holds if and only if 
$$
v(x) = \mathrm{e}^{i \theta} \lambda^{1/2} \RR(\lambda x+ y)
$$ 
with some constants $\theta \in [0,2 \pi)$, $\lambda > 0$ and $y \in \R$. Since $v_n \weakto v$ in $L^2$, the  weak lower semi--continuity of the $L^2$-norm implies that
$$
 \liminf_{n \to \infty} \| v_n \|_{L^2}^2 \geq \| v \|_{L^2}^2 = \| \RR \|_{L^2}^2 = 2\pi.
$$
Finally, we have equality (after passing to a subsequence if necessary) if and only if $v_n \to v$ strongly in $L^2(\R)$, which completes the proof.
\end{proof}

\section{Useful Identities and Gauge Transformation}\label{app:misc}

For sufficiently regular and decaying functions $v : \R \to \C$ and the Hilbert transform $\Hil$, we have the following identities:
\be \label{eq:Hil1}
\mathrm{Re} \langle x v, \Hil(v) \rangle = \frac{1}{2 \pi} \left \vert  \int_{\R} v \, dx \right \vert ^2\ ,
\ee
\be \label{eq:Hil2}
\mathrm{Re} \, \langle  \pt_x v, \Hil(|v|^2) v \rangle = -\frac{1}{2} \langle |v|^2, |D| |v|^2 \rangle,
\ee
\be \label{eq:Hil3}
 \langle v \Hil(|v|^2), v \Hil(|v|^2) \rangle = \frac{1}{3} \int_{\R} |v|^6 \, dx.
\ee
Let us check these identities. Using the Plancherel theorem, we have
$$ \mathrm{Re}\langle x v, \Hil(v) \rangle =-\frac{1}{2\pi}\int_0^\infty \mathrm{Re}[\ov{\hat v}(\xi)\pt_\xi \hat v(\xi)]\, d\xi +\frac{1}{2\pi}\int_{-\infty}^0 \mathrm{Re}[\ov{\hat v}(\xi)\pt_\xi \hat v(\xi)]\, d\xi =\frac{1}{2\pi}|\hat v(0)|^2\ ,$$
which is \eqref{eq:Hil1}.\\
Since $\Hil$ preserves real valued functions, we have
$$\mathrm{Re} \, \langle  \pt_x v, \Hil(|v|^2) v \rangle =\inner {\mathrm{Re}[\ov v\partial_xv]} {\Hil (|v|^2} =\frac 12\inner {\pt_x(|v|^2)} {\Hil (|v|^2}
=-\frac 12\inner {|v|^2}{\Hil \pt_x(|v|^2}\ ,$$
which leads to \eqref{eq:Hil2} since $\Hil\pt_x=|D|$. \\
Finally, setting $\rho :=|v|^2$, we have $\rho =\Pi_+\rho +\ov{\Pi_+\rho }$ and
$$\Vert v\Vert_{L^6}^6=\int_\R \rho^3\, dx =\int_\R (\Pi_+\rho +\ov{\Pi_+\rho} )^3\, dx =3\int_\R [(\Pi_+\rho)^2\ov{\Pi_+\rho} +\Pi_+\rho (\ov{\Pi_+\rho} )^2]\, dx \ ,$$
because  $H^s_\pm $ is preserved by the product (if $s$ is large enough) and the ranges of $\Pi_+$ and of $\Pi_-$ are orthogonal. We obtain
$$ \int_\R \rho ^3\, dx=3\int_\R |\Pi_+\rho |^2\rho \, dx =\frac 34 \int_\R (\rho^2+(H\rho )^2)\rho \, dx\ ,$$
whence 
$$\int_\R \rho ^3\, dx=3\int \rho (H\rho )^2\, dx \ ,$$
which is precisely \eqref{eq:Hil3}.

\subsection{Derivation of Pohozaev Identity}
In this paragraph we provide the details for the proof of Proposition \ref{prop:travel} when deriving the Pohozaev type identities.  We integrate \eqref{eq:S_om} against $x \pt_x \ov{S}$ over the compact interval $[-R,R]$. By taking the real part, we find 
$$
\mathrm{Re}  \int_{-R}^R \left ( -x \ov{S}' S'' -  x \ov{S}' (|D| |S|^2) S  + \frac{1}{4}x \ov{S}' |S|^4 S \right )  dx = \tilde{\omega} \mathrm{Re} \int_{-R}^R x \ov{S}' S \, dx.
$$
For the first term on the left-hand side, we find
\begin{align*}
\mathrm{Re} \int_{-R}^R x \ov{S}' S'' \, dx & = \frac{1}{2} \int_{-R}^R x \pt_x |S'|^2 \, dx = \frac{1}{2} x |S'(x)|^2 \Big |_{-R}^R - \frac{1}{2} \int_{-R}^R |S'(x)|^2 \, dx.
\end{align*}
In view of $|D|= \Hil \pt_x$ and integrating by parts, we obtain
\begin{align*}
& \mathrm{Re} \int_{-R}^R x \ov{S}' (|D| |S|^2) S \, dx = \frac{1}{2} \int_{-R}^R x \pt_x |S|^2 ( \Hil \pt_x |S|^2) \, dx   \\
&\to \frac{1}{2 \pi} \left ( \int_{-\infty}^{\infty} \pt_x |S|^2 \, dx \right )^2 = 0 \quad \mbox{as} \quad R \to +\infty,
\end{align*}
where we also used \eqref{eq:Hil1}. Next, we notice
$$
\mathrm{Re} \, \int_{-R}^R x \pt_x \ov{S} |S|^4 S \, dx = \frac{1}{6} x |S(x)|^6 \Big |_{-R}^R - \frac{1}{6} \int_{-R}^R |S|^6 \, dx
$$
$$
\tilde{\omega} \mathrm{Re} \int_{-R}^R x \pt_x \ov{S} S \, dx = \frac{\tilde{\omega}}{2} x |S(x)|^2 \Big |_{-R}^R  - \frac{\tilde{\omega}}{2} \int_{-R}^R |S(x)|^2 \, dx.
$$
Since $|S|^2 + |S'|^2 \in L^1(\R)$, there exists a sequence $R_n \to +\infty$ such that $x(|S(x)|^2+ |S'(x)|^2)  \to 0$ with $x=\pm R_n$ as $n \to \infty$ and we obtain
$$
\frac{1}{2} \int_{\R} |\pt_x S|^2 \,d x- \frac{1}{24} \int_{\R} |S|^6 \, dx = -\frac{\tilde{\omega}}{2} \int_{\R} |S|^2 \, dx.
$$

\subsection{Gauge Transformation and Pseudo-Conformal Law}

Let $s \geq 0$ be given. We consider the nonlinear map
\be
\Phi : H^s(\R) \to H^s(\R), \quad u(x) \mapsto u(x) \mathrm{e}^{-\frac{i}{2} \int_{-\infty}^x |u(y)|^2 \, dy}.
\ee
Clearly, we have mass preservation property $M[\Phi(u) ] = M(u)$ and it can be shown that $\Phi : H^s(\R) \to H^s(\R)$ is a diffeomorphism. We refer to the map $\Phi$ as the \textbf{gauge transform}. We remark that the map $\Phi$ (with different numerical factors in the exponential term) also plays an important role for the derivative NLS (DNLS).

Suppose $u(t) \in H^s(\R)$ (not necessarily restricted to $H^s_+(\R)$) solves \eqref{eq:NLS} on some time interval $I \subset \R$. Then $v(t)=\Phi(u(t)) \in H^s(\R)$ is found to solve the derivative type NLS equation:
\be \label{eq:nls_v}
i \pt_t v = -\pt_{xx} v - (|D| |v|^2) v + \frac{1}{4} |v|^4 v,
\ee
using that $\Pi_+ = \frac{1}{2}(1 + i \Hil)$ and $|D|= \Hil \pt_x$. We readily check that \eqref{eq:nls_v} has the conserved energy
\be
\widetilde{E}(v) =  \frac{1}{2} \int_{\R} |\pt_x v|^2 - \frac{1}{4}\int_{\R} |v|^2 (|D| |v|^2) + \frac{1}{24} \int_{\R} |v|^6 .
\ee
From identities \eqref{eq:Hil2} and \eqref{eq:Hil3}, we have
\be
\widetilde{E}(v) = \frac{1}{2} \int_{\R} |\pt_x v + \frac{1}{2} \Hil(|v|^2) v|^2 = \frac{1}{2} \int_{\R} |\pt_x u - i \Pi_+(|u|^2) u|^2 = E(u)
\ee
since $v = \Phi(u)$ and $\Pi_+ = \frac{1}{2}(1 + i \Hil)$.

Now, let $\Sigma = \{ u \in H^1(\R) : xu \in L^2(\R) \}$ denote the space of solutions with finite variance and energy (not necessarily restricted to the Hardy space).

\begin{lem}[Pseudo-Conformal Law] \label{lem:pseudo}
Suppose that $u \in C([-T,T]; H^1(\R))$ solves \eqref{eq:NLS} with $u(0) =u_0 \in \Sigma$. Then $u(t) \in \Sigma$ for all $t \in [-T,T]$ and we have
$$
\frac{d^2}{dt^2} \int_{\R} |x|^2 |u(t,x)|^2 \, dx = 16 E(u_0) \quad \mbox{for $t \in [-T,T]$}.
$$
As a consequence, it holds that
$$
8 t^2 E(\mathrm{e}^{i |x|^2/4t} u_0) = \int_{\R} |x|^2 |u(t,x)|^2 \,dx
$$
for all $t \in [-T,T]$ with $t \neq 0$.
\end{lem}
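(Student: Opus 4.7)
The plan is to establish the virial identity $G''(t)=16E(u_0)$ for $G(t):=\int_{\R}x^2|u(t,x)|^2\,dx$ and then deduce the pseudo--conformal consequence by integrating twice and matching the result against a direct expansion of $E(\mathrm{e}^{i|x|^2/4t}u_0)$. Throughout, I would work in the gauge--transformed picture: the map $\Phi:u\mapsto v$ preserves the modulus, hence preserves $G$ and the class $\Sigma$, satisfies $\widetilde E(v)=E(u)$ by Appendix \ref{app:misc}, and converts \eqref{eq:NLS} into the semilinear equation \eqref{eq:nls_v}. Propagation of $u(t)\in\Sigma$ on $[-T,T]$ will then follow by approximating $u_0$ by Schwartz data, running the virial bound below for the smooth approximants (which gives at most quadratic--in--$t$ control on $G_n(t)$), and passing to the limit by continuous dependence of the $H^1$--flow.

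The first derivative is immediate. Writing $f(v)=gv$ with $g:=-|D||v|^2+\tfrac{1}{4}|v|^4$, the key observation is that $g$ is real, so $\bar v f(v)\in\R$ and the continuity equation for $|v|^2$ has vanishing nonlinear flux, i.e.\ $\pt_t|v|^2=-2\pt_x\,\mathrm{Im}(\bar v\pt_x v)$. Integration by parts yields $G'(t)=4\int x\,\mathrm{Im}(\bar v\pt_x v)\,dx$.

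The heart of the proof is computing $G''(t)$. Differentiating through \eqref{eq:nls_v} and integrating by parts, the linear terms contribute the standard $8\int|\pt_x v|^2\,dx$, and the nonlinear terms, after using $\bar v f(v)\in\R$ to cancel one piece, collapse to $-4\int x\rho\,\pt_x g\,dx$ with $\rho:=|v|^2$. The local piece $\tfrac{1}{2}\int x\rho^2\pt_x\rho\,dx=-\tfrac{1}{6}\int\rho^3$ is routine. The hard part will be the nonlocal piece $-\int x\rho\,|D|\pt_x\rho\,dx$: applying identity \eqref{eq:Hil1} to $\pt_x\rho$, whose integral vanishes, gives $\int x\,\pt_x\rho\,|D|\rho\,dx=0$; combined with $\int x\pt_x(\rho|D|\rho)\,dx=-\int\rho|D|\rho\,dx$, this yields $-\int x\rho\,|D|\pt_x\rho\,dx=\int\rho|D|\rho\,dx$. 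Putting everything together,
\[
G''(t)=8\int|\pt_x v|^2-4\int\rho|D|\rho+\tfrac{2}{3}\int\rho^3=16\,\widetilde E(v)=16\,E(u_0).
\]

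To close the pseudo--conformal identity, I would integrate twice to obtain $G(t)=G(0)+tG'(0)+8t^2 E(u_0)$ and then verify by direct computation that $8t^2 E(\mathrm{e}^{i|x|^2/4t}u_0)$ matches this expression. Since $|\mathrm{e}^{i|x|^2/4t}u_0|=|u_0|$, the Cauchy--Szeg\H{o} projection in the energy is unchanged, so expanding
\[
\pt_x\phi-i\Pi_+(|\phi|^2)\phi=\mathrm{e}^{i|x|^2/4t}\bigl(\pt_x u_0-i\Pi_+(|u_0|^2)u_0+\tfrac{ix}{2t}u_0\bigr),\qquad \phi:=\mathrm{e}^{i|x|^2/4t}u_0,
\]
together with the cross--term identity $\mathrm{Im}\bigl((\pt_x u_0-i\Pi_+(|u_0|^2)u_0)\bar u_0\bigr)=\mathrm{Im}(\bar u_0\pt_x u_0)-\tfrac{1}{2}|u_0|^4$, matches the coefficients of $1$, $t$, and $t^2$ exactly. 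The principal obstacle is the nonlocal cancellation in $G''$, which ultimately reflects the $L^2$--critical scaling of all three terms in $E$.
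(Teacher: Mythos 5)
Your proposal is correct and follows essentially the same route as the paper: gauge-transform to \eqref{eq:nls_v}, establish the virial identity $G''(t)=16\widetilde{E}(v_0)=16E(u_0)$, integrate twice, and match the resulting quadratic polynomial in $t$ against a direct expansion of the energy of the quadratically-phased data. The only substantive difference is that you carry out the virial computation explicitly -- in particular the nonlocal cancellation $-\int_\R x\rho\,|D|\pt_x\rho\,dx=\int_\R \rho\,|D|\rho\,dx$ obtained from \eqref{eq:Hil1} applied to $\pt_x\rho$ -- where the paper simply invokes ``calculations analogous to $L^2$-critical NLS'' from Cazenave--Haraux (a reference that does not literally cover the term $(|D||v|^2)v$), so this, together with your correctly verified cross-term identity in the final matching, is a useful supplement rather than a different argument.
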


\begin{remarks*}
1) Recall that the ground state $\RR(x) = \frac{\sqrt{2}}{x+i} \in H^1_+(\R)$ does {\em not} belong to $\Sigma$. In fact, it can be shown that all multi-solitons for \eqref{eq:NLS} fail to have finite variance as well. 

2) In view of the non-negativity $E(u) \geq 0$, we see that the classical Zakharov--Glassey argument to prove existence for negative energy data (with finite variance) cannot be invoked for \eqref{eq:NLS}.
\end{remarks*}

\begin{proof}
Let $v(t) = \Phi(u(t))$ for $t \in [-T,T]$. We readily check that $\Phi : \Sigma \to \Sigma$ holds and, in particular, we have $v_0=v(0) \in \Sigma$. Recall that $v \in C([-T,T]; H^1(\R))$ solves \eqref{eq:nls_v}. By following arguments for standard-type NLS, see, e.\,g., \cite{CaHa-98}[Section 7.6], we can deduce that $v(t) \in \Sigma$ for all $t \in [-T,T]$. Moreover, by calculations analogous to $L^2$-critical NLS (see \cite{CaHa-98} again), we find the variance-virial identities
\be
\frac{d}{dt} \int_{\R} |x|^2 |v(t,x)|^2 \, dx = 4 \int_{\R} x \mathrm{Im} (\ov{v} \pt_x v) \, dx, 
\ee
\be
\quad \frac{d^2}{dt^2} \int_{\R} |x|^2 |v(t,x)|^2 \, dx = 16 \widetilde{E}_0,
\ee
with $v_0 = \Phi(u_0)$ and $\widetilde{E}_0 = \widetilde{E}(v_0)$. By integration in $t \in [-T,T]$, we obtain that the variance $V(t) = \int_{\R} |x|^2 |v(t,x)|^2 \, dx$ is given by
\be
V(t) = 8 \widetilde{E}_0 t^2 + A_0 t + V_0
\ee
with the constants $A_0 = 4 \int_{\R} x \mathrm{Im}(\ov{v}_0 \pt_x v_0) \, dx$ and $V_0=V(0)$. For $t \in [-T,T]$ with $t \neq 0$, we observe 
\begin{align*}
8 t^2 \widetilde{E}(\mathrm{e}^{i |x|^2/4t} v_0) & = 8t^2 \left ( \frac{1}{2} \int_{\R} | \pt_x( \mathrm{e}^{i |x|^2/4t} v_0) |^2 - \frac{1}{4} \int_{\R} |v_0|^2 (|D| |v_0|^2) + \frac{1}{24} \int_{\R} |v_0|^6 \right ) \\
& = 8 t^2 \widetilde{E}_0 + A_0 t + V_0  = V(t).
\end{align*}

Finally, we go back to the function $u=u(t,x)$. Here we note that $|u(t,x)| = |v(t,x)|$ and $\widetilde{E}(v_0) = E(u_0)$ since $v(t)= \Phi(u(t))$. This proves this first claim in Lemma \ref{lem:pseudo}. For the second statement, we remark that $\Phi$ commutes with multiplication by $\mathrm{e}^{i |x|^2/4t}$ for $t \neq 0$, i.\,e., we have $\Phi(\mathrm{e}^{i |x|^2/4t} u_0) = \mathrm{e}^{i |x|^2/4t} \Phi(u_0) = \mathrm{e}^{i |x|^2/4t} v_0$. Therefore $\widetilde{E}(\mathrm{e}^{i |x|^2|4t} v_0) = E(\mathrm{e}^{i|x|^2/4t} u_0)$. This completes the proof.
\end{proof}

\end{appendix}

\bibliographystyle{siam}
\bibliography{CMDNLSBib}

\end{document}